\documentclass{article}
\usepackage{amsmath} % Package g\'en\'eral pour \'ecrire des maths...
\usepackage{mathtools} % Vient compl\'eter amsmath avec davantage d'outils.
\usepackage{amssymb} % Permet l'utilisation de certains symboles (ex, le R de l'ensemble des r\'eels); cf documentation.
\usepackage{stmaryrd} % St Mary Road's symbols : vient compl\'eter et am\'eliorer amssymb, notamment \llbracket et \rrbracket; cf documentation
\usepackage{mathrsfs} % Ralph Smith Formal Script ; nouveaux caract\`eres calligraphiques avec \mathscr{ABC}
\usepackage{amsthm}
\usepackage{microtype}

\usepackage[margin=3cm]{geometry}
\usepackage[shortlabels]{enumitem}

\usepackage{todonotes}
\usepackage{latexsym}
\usepackage{xspace}
\usepackage{booktabs}

\usepackage{graphicx} % Plus complet que "graphics", offre des options lors de l'ajout d'objets avec \includegraphics
\usepackage{float} % Facilite l'ajout d'objets float (figures, tables...)
\usepackage{caption} % Offre plein d'options sp\'ecifiques pour l'ajout de sous-titres \`a des objets float.
\usepackage{subcaption} % permet l'ajout de sous-titres \`a des subfigures.
\usepackage{siunitx}
\usepackage{tikz}
\usepackage{tikz-cd}
\usepackage{blindtext} % Pour rapidement ajouter des pav\'e de texte sans sens particulier
\usepackage{comment}
\usepackage{hyperref}
\usepackage{csquotes}

\usepackage{titlesec}

\titleformat{\subsubsection}[runin]
  {\normalfont\normalsize\itshape}
  {\thesubsubsection}
  {0.6em}
  {}
  [. ]

\titlespacing*{\subsubsection}
  {0pt}{1.25ex}{0.5em}

\newcommand{\vect}[1]{\text{Vect}}

\newcommand{\R}{\mathbb{R}}

\newcommand{\N}{\mathbb{N}}
\newcommand{\Z}{\mathbb{Z}}
\newcommand{\E}{\mathbb{E}}

\newcommand{\floor}[1]{\lfloor #1 \rfloor}

\newcommand{\scalp}[2]{\langle #1 \, , #2\rangle}

\newcommand{\Obs}{\mathcal{O}}

\newcommand{\End}{\mathrm{End}}
\newcommand{\Hom}{\mathcal{L}}
\newcommand{\tr}{\mathrm{Tr}}
\newcommand{\supp}{\mathrm{supp}\,}
\newcommand{\rank}{\mathrm{rank}}

\newcommand{\HSnorm}[1]{\left|#1\right|_{\mathrm{HS}}}

\newcommand{\V}{\mathcal{V}}
\newcommand{\Vbox}{\V\boxtimes \V^*}
\newcommand{\Mloc}{\mathcal{M}_{\mathrm{loc}}}
\newcommand{\W}{\mathcal{S}(\R)}

\newtheorem{thm}{Theorem}
\newtheorem{prop}{Proposition}
\newtheorem{cor}{Corollary}
\newtheorem{lem}{Lemma}

\newtheorem{defn}{Definition}
\newtheorem{remark}[thm]{Remark}

\title{Optimal geometric barriers for weighted observability of heat semigroups on metric measure spaces}
\author{Vincent Boulard\footnote{CERMICS, CNRS, \'Ecole nationale des ponts et chauss\'ees, Institut Polytechnique de Paris, Marne-la-Vall\'ee, France (\texttt{vincent.boulard@enpc.fr}).}, Amaury Hayat\footnote{CERMICS, CNRS, \'Ecole nationale des ponts et chauss\'ees, Institut Polytechnique de Paris, Marne-la-Vall\'ee, France (\texttt{amaury.hayat@enpc.fr}).}, and Emmanuel Tr\'elat\footnote{Sorbonne Universit\'e, Universit\'e Paris Cit\'e, CNRS, Inria, Laboratoire Jacques-Louis Lions, LJLL, F-75005 Paris, France (\texttt{emmanuel.trelat@sorbonne-universite.fr}).}}
\date{}

\begin{document}

\maketitle

\begin{abstract}
Weighted integrated observability inequalities for heat equations usually involve a small-time factor of the form $e^{-\gamma/t}$. We prove that this scale is not an artefact of Carleman or spectral methods: it is forced by the geometry of the observation set.

Let $A$ be a nonnegative self-adjoint operator on sections of a finite-rank Euclidean vector bundle over a doubling metric measure space, satisfying ultracontractivity, Davies-Gaffney estimates (equivalently, finite speed of propagation for the wave equation) and a pointwise local Weyl law. If a weighted integrated observability inequality holds on a measurable set $\omega$, for a fixed horizon $T\in(0,+\infty]$ and an admissible weight $h$, then, for every $0<\kappa<\frac{1}{2}$,
$$
h(t)\leq A_{T,\kappa}\exp\left(-\kappa\frac{\mathcal{L}(\omega)^2}{t}\right),\qquad 0<t<T,
$$
where $\mathcal{L}(\omega)$ is the essential maximal distance to $\omega$, replaced by any finite radius when $\mathcal{L}(\omega)=+\infty$. Thus, for $h(t)=e^{-\gamma/t}$, necessarily $\gamma\geq\mathcal{L}(\omega)^2/2$. This settles, with the optimal threshold, the maximal-distance lower bound for the infinite-time constant left open in earlier work. In the control-norm convention, the fast-control rate is at least $\mathcal{L}(\omega)^2/4$, recovering Miller's bound.

The proof rests on the spectral packet $(\cosh(r\sqrt A)-1)e^{-tA}$. A pointwise Weyl law gives its sharp lower growth, while finite propagation speed and a weak-kernel Kannai transmutation formula make it exponentially small on $\omega$. Without kernel continuity or compact resolvent, we develop pointwise spectral measures and weak wave kernels. The framework covers Laplace-type operators on compact Riemannian manifolds, coupled heat systems, Schr\"odinger operators on $\mathbb{R}^d$, equiregular sub-Laplacians and Grushin models, and $\delta'$-coupled Laplacians on metric graphs.
\end{abstract}

\tableofcontents

\section{Introduction and main theorem}\label{sec:intro}
%%%%%%%%%%%%%%%%%%%%%%%%%%%%%%%%%%%%%

\subsection{Context and aims}\label{subsec:context}

A recurrent theme in the controllability and observability literature for heat equations is the appearance of an exponential weight $e^{-\gamma/t}$ in small time. Let $(M,g)$ be a smooth connected compact $d$-dimensional Riemannian manifold (with Dirichlet conditions if $\partial M\neq\emptyset$), and let $\omega\subset M$ be a nonempty open subset with $\overline{\omega} \neq M$. Denote by $\Delta_g$ the nonnegative Laplace--Beltrami operator on $L^2(M)=L^2(M,\mu)$, where $\mu$ is the Riemannian measure. It was shown in \cite{LebeauRobbiano1995CPDE} (by means of a spectral inequality proved via local elliptic Carleman estimates) and in \cite{FursikovImanuvilov1996,Imanuvilov1995Sbornik} (by means of global parabolic Carleman estimates) that the heat equation is observable in $\omega$ for any finite time $T>0$: there exists $C_T>0$ such that
\begin{equation}\label{finite-obs}
\forall f\in L^2(M),\qquad \|e^{-T\Delta_g}f\|_{L^2(M)}^2 \leq C_T \int_0^T \|e^{-t\Delta_g}f\|_{L^2(\omega)}^2\,dt.
\end{equation}
It is well known (see \cite{FursikovImanuvilov1996,LeRousseauLebeau2012COCV,LebeauRobbiano1995CPDE}) that the optimal constant in \eqref{finite-obs} blows up exponentially as $T\to 0$.
More precisely, there exist constants $C,K_0>0$ such that, for all $T\in(0,1]$, the optimal constant $C_T$ in \eqref{finite-obs} satisfies
\begin{equation}\label{opti-K}
    C_T \leq C e^{K_0/T},
\end{equation}
and this exponential growth is genuine: a matching exponential lower bound on $C_T$ as $T\to0$ also holds (see \cite{Guichal1985_ControlOperatorHeat} in the one-dimensional case and the introduction of \cite{LaurentLeautaud2021APDE} in general). We denote by $K_*$ the infimum of all exponents $K_0$ for which \eqref{opti-K} holds; thus $K_*$ is the small-time exponential rate of the \emph{squared} observability constant. By the classical duality of \cite{DoleckiRussell1977SICON} (see also \cite{Lions1988SIAMRev,Trelat2024CFID}), \eqref{finite-obs} is equivalent to exact null-controllability in time $T$ for $\partial_t y+\Delta_g y=1_\omega u$. If $\mathcal{C}_T$ denotes the usual $L^2$ null-control cost, then $C_T=\mathcal{C}_T^2$. Consequently, if $\mathcal{K}_{\rm heat}:=\inf\{K>0\,\mid\, \mathcal{C}_T\lesssim e^{K/T}\text{ as }T\to0^+\}$, then $K_*=2\mathcal{K}_{\rm heat}$. We use the squared-observability normalization throughout the paper.

A fundamental question underlies the appearance of this small-time exponential weight: \emph{is the scale $e^{-\gamma/t}$ forced by the geometry, or is it merely a by-product of the Carleman method?} The main contribution of this article is to answer this question, in a very general abstract metric measure space setting, by proving that the exponential behavior is an intrinsic geometric barrier. The resulting obstruction has the same logarithmic form as Varadhan's short-time heat-kernel asymptotics.

Beyond \eqref{finite-obs}, two integrated formulations of observability play a central role:
\begin{enumerate}[(i)]
\item \emph{Finite-time integrated observability} \cite{FernandezCaraZuazua2000ADE,Miller2006RLM}: there exist $\tilde\gamma>0$ and $\tilde C>0$ such that for every $T>0$,
\begin{equation}\label{intobs}
\forall f\in L^2(M),\quad \int_0^T e^{-\tilde\gamma/t}\|e^{-t\Delta_g}f\|_{L^2(M)}^2\,dt \leq \tilde C\int_0^T \|e^{-t\Delta_g}f\|_{L^2(\omega)}^2\,dt.
\end{equation}
\item \emph{Infinite-time integrated observability} \cite{Zuazua2001Carleman}: there exist $\tilde\gamma_\infty>0$ and $C_\infty>0$ such that
\begin{equation}\label{infintobs}
\forall f\in(\ker\Delta_g)^\perp,\quad \int_0^{+\infty} e^{-\tilde\gamma_\infty/t}\|e^{-t\Delta_g}f\|_{L^2(M)}^2\,dt \leq C_\infty\int_0^{+\infty} \|e^{-t\Delta_g}f\|_{L^2(\omega)}^2\,dt.
\end{equation}
For the Dirichlet Laplacian, $\ker\Delta_g=\{0\}$; on a closed connected manifold, $(\ker\Delta_g)^\perp$ is the zero-mean subspace. This restriction is necessary because the two time integrals diverge on nonzero stationary states.
\end{enumerate}
A key result of \cite{Miller2006RLM} is that (i) holds if and only if the finite-time observability \eqref{finite-obs} holds for every $T>0$, and if we denote by $\gamma$ (by $\gamma_\infty$) the optimal exponential constant in \eqref{intobs} (in \eqref{infintobs}), then $\gamma=K_*$. Equivalently, in the usual control-norm convention, $\gamma=2\mathcal{K}_{\rm heat}$. Moreover, \eqref{intobs} implies \eqref{infintobs} with $\gamma\geq\gamma_\infty$.
We shall use the standard fact that the small-time formulation can be converted into the above all-time formulation. A precise statement is recalled in Appendix~\ref{subsec:small-large}.

In \cite{FernandezCaraZuazua2000ADE}, the authors also noticed that \eqref{intobs} implies, via Laplace's method (see \cite[Chapter 4]{deBruijn1958}), the following \emph{sharp observability inequality}: there exists $c_0>0$ such that, for all $T>0$, there exist $K'',C'>0$ with
\begin{equation}\label{sharpobs}
\forall f\in L^2(M),\qquad \|e^{-c_0\sqrt{\Delta_g}}f\|_{L^2(M)}^2 \leq C'e^{K''/T}\int_0^T \|e^{-t\Delta_g}f\|_{L^2(\omega)}^2\,dt.
\end{equation}

The optimal exponential constants $\gamma$ and $\gamma_\infty$ are geometric quantities. The distance relevant to the present result is
\begin{equation*}
\mathcal{L}(\omega)=\sup_{x\in M}d(x,\omega).
\end{equation*}
Earlier Carleman arguments yield lower bounds for $\gamma$ and $\gamma_\infty$ in terms of a boundary-sensitive inradius-type quantity $D(\omega)\leq\mathcal{L}(\omega)$; we refer to \cite{Zuazua2001Carleman} for its precise definition and for the estimate $\gamma,\gamma_\infty\geq D(\omega)^2/2$. Miller's heat-kernel argument \cite{Miller2004JDE} gives the stronger maximal-distance bound $\gamma\geq\mathcal{L}(\omega)^2/2$ for the finite-time constant, equivalently $\mathcal{K}_{\rm heat}\geq\mathcal{L}(\omega)^2/4$ in the control-norm convention. Whether the corresponding maximal-distance bound also holds for $\gamma_\infty$ was left open in \cite[Section~5]{ErvedozaZuazua2011}. Corollary~\ref{cor:conj_ervzua} answers this question. For a detailed analysis of the different geometric constants and of the limitations of a universal distance-only formula for the full control cost, see \cite{LaurentLeautaud2021APDE}.

We refer to Subsection~\ref{subsec:discussion} for a discussion of the novelty of our approach and of its relation to previous results.

\subsection{Main results}\label{subsec:main-results}

We work on a locally compact, separable metric measure space $(X,d,\mu)$ with $\supp\mu=X$ and the doubling volume property, and consider a nonnegative self-adjoint operator $A$ on $L^2(X;\V)$ (sections of a Euclidean vector bundle $\V\to X$) satisfying three structural hypotheses stated precisely in Section~\ref{sec:framework}: ultracontractivity~\ref{item:A1}, Davies--Gaffney estimates~\ref{item:A2}, and a local Weyl law~\ref{item:A3}. We fix a non-negligible measurable observation set $\omega\subset X$ and a time horizon $T\in(0,+\infty]$.

Since the observability inequality below is insensitive to modifications of $\omega$ on a $\mu$-null set, the geometric quantity governing the barrier must be defined accordingly. We therefore use the \emph{essential distance} to $\omega$,
\begin{equation}\label{ess-dist}
d_{\mathrm{ess}}(x,\omega) = \sup\{ r > 0 \,\mid\, \mu(B(x,r) \cap \omega) = 0 \}, \qquad \mathcal{L}(\omega) = \sup_{x\in X} d_{\mathrm{ess}}(x,\omega) \in[0,+\infty],
\end{equation}
with the convention $\sup\emptyset = 0$. This makes $\mathcal{L}(\omega)$ insensitive to $\mu$-null modifications of $\omega$ and does not require $\omega$ to be open. When $\omega$ is open it coincides with $\sup_{x}d(x,\omega)$. The only properties used below are that, for every $\delta\geq 0$, the superlevel set $U_\delta = \{ x \,\mid\, d_{\mathrm{ess}}(x,\omega) > \delta \}$ is open (the function $d_{\mathrm{ess}}(\cdot,\omega)$ being $1$-Lipschitz) and, whenever nonempty, of positive measure (since $\supp\mu = X$), and that $d(x,y)\geq d_{\mathrm{ess}}(x,\omega)$ for $\mu$-a.e.\ $y\in\omega$.

We define the set of admissible weights by
\[
\Obs([0,T])=\left\{h\in C^0_b([0,T))\ \mid\ \begin{array}{l}
 h\geq0,\ \text{and there exists }\tau_h\in(0,T)\\[0mm]
 \text{such that }h\text{ is nondecreasing on }[0,\tau_h]
\end{array}\right\}.
\]
Here $C^0_b([0,T))$ denotes the space of bounded continuous functions on $[0,T)$.

\begin{defn}\label{defn:h-obs}
Fix $T\in(0,+\infty]$ and $h\in\Obs([0,T])$. We say that an \emph{integrated observability inequality with weight $h$} holds if
\begin{equation}\label{gen-obs}
\forall f\in L^2(X;\V),\quad \int_0^T h(t)\|e^{-tA}f\|_{L^2(X;\V)}^2\,dt \leq \int_0^T \|e^{-tA}f\|_{L^2(\omega;\V)}^2\,dt.
\end{equation}
When $T=+\infty$, the inequality is understood for every $f$ for which the right-hand side is finite; this excludes vacuous comparisons of the form $+\infty\leq+\infty$.
\end{defn}

The normalization with constant one on the right-hand side is harmless: an inequality with a constant $C_{\mathrm{obs}}$ is reduced to \eqref{gen-obs} by replacing $h$ with $h/C_{\mathrm{obs}}$, which does not change any small-time exponential rate; this is what makes the passage to Corollary~\ref{cor:conj_ervzua} legitimate.

All conclusions below remain valid if \eqref{gen-obs} is assumed only for $f\in(\ker A)^\perp$. Indeed, every test section used in the proof belongs to $(\ker A)^\perp$, because its spectral multiplier contains the factor $\cosh(r\sqrt\lambda)-1$, which vanishes at $\lambda=0$. We use this variant for infinite-time observability.

\begin{thm}[Geometric barrier]\label{thm:main_thm}
Fix $T\in(0,+\infty]$ and $h\in\Obs([0,T])$ such that \eqref{gen-obs} holds.
If $\mathcal{L}(\omega)<+\infty$, then, for every $0<\kappa<\frac{1}{2}$, there exists $A_{T,\kappa}>0$ such that
\begin{equation}\label{main-barrier}
\forall t\in(0,T),\qquad h(t)\leq A_{T,\kappa}\,e^{-\kappa\mathcal{L}(\omega)^2/t}.
\end{equation}
If $\mathcal{L}(\omega)=+\infty$, then, for every $R>0$ and every $0<\kappa<\frac{1}{2}$, there exists $A_{T,R,\kappa}>0$ such that
\[
\forall t\in(0,T),\qquad h(t)\leq A_{T,R,\kappa}\,e^{-\kappa R^2/t}.
\]
Equivalently, with the conventions $\log0=-\infty$ and $-\mathcal{L}(\omega)^2/2=-\infty$ when $\mathcal{L}(\omega)=+\infty$, these estimates are summarized by the sharp logarithmic endpoint
\begin{equation}\label{main-varadhan}
\limsup_{t\downarrow0}t\log h(t)\leq-\frac{1}{2}\mathcal{L}(\omega)^2.
\end{equation}
The constants are allowed to depend on $h$, $A$, $\omega$ and the displayed parameters; no uniformity as $\kappa\uparrow\frac{1}{2}$ is asserted.
\end{thm}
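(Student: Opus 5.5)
We test \eqref{gen-obs} on the spectral packets described in the abstract. Fix a point $x_0$ and a radius $r$ with $0<r<d_{\mathrm{ess}}(x_0,\omega)$; when $\mathcal{L}(\omega)=+\infty$ we simply take $r<R$. The test sections are
\[
f_{s}=(\cosh(r\sqrt A)-1)\,e^{-sA}\phi,
\]
where $s>0$ is small and $\phi$ is a localized vector close to a Dirac mass at $x_0$. Concretely, $\phi=\mu(B(x_0,\epsilon))^{-1}\mathbf 1_{B(x_0,\epsilon)}v$, or we work directly with the pointwise spectral measure at $x_0$. The spectral multiplier vanishes at $\lambda=0$, so $f_s\in(\ker A)^\perp$. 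By the semigroup property $e^{-tA}f_s=(\cosh(r\sqrt A)-1)e^{-(t+s)A}\phi$, so both sides of \eqref{gen-obs} are integrals over $t$ of the quantities
\[
N(\tau)=\|(\cosh(r\sqrt A)-1)e^{-\tau A}\phi\|^2_{L^2(X;\V)},\qquad O(\tau)=\|(\cosh(r\sqrt A)-1)e^{-\tau A}\phi\|^2_{L^2(\omega;\V)},
\]
evaluated at $\tau=t+s$. The argument then has three steps.

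\textbf{Step 1 (lower bound from the Weyl law).} We write $N(\tau)=\int(\cosh(r\sqrt\lambda)-1)^2e^{-2\tau\lambda}\,dm_{x_0}(\lambda)$, where $m_{x_0}$ is the pointwise spectral measure at $x_0$. The pointwise local Weyl law \ref{item:A3} gives $m_{x_0}([0,\Lambda])\gtrsim\Lambda^{d/2}$ for large $\Lambda$, with ultracontractivity \ref{item:A1} controlling the normalization. A Laplace-method lower bound near $\sqrt\lambda\approx r/(2\tau)$ then yields, for any $\eta>0$,
\[
N(\tau)\geq c_\eta\, e^{(1-\eta)r^2/(2\tau)},\qquad \tau\ \text{small}.
\]
This is the sharp growth $e^{2\cdot r^2/(4\tau)}$.

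\textbf{Step 2 (upper bound on $\omega$ from finite propagation speed).} We use the Kannai transmutation formula
\[
e^{-\tau A}=\int_{\R}\frac{e^{-\sigma^2/4\tau}}{\sqrt{4\pi\tau}}\cos(\sigma\sqrt A)\,d\sigma .
\]
Multiplying by $\cosh(r\sqrt A)-1$ turns this into a Gaussian in $\sigma$ shifted by $\pm ir$. After contour shifting, the identity reads
\[
(\cosh(r\sqrt A)-1)e^{-\tau A}=\int_{\R}\frac{\tfrac12 e^{-(\sigma+ir)^2/4\tau}+\tfrac12e^{-(\sigma-ir)^2/4\tau}-e^{-\sigma^2/4\tau}}{\sqrt{4\pi\tau}}\cos(\sigma\sqrt A)\,d\sigma .
\]
The kernel has modulus about $e^{r^2/4\tau}e^{-\sigma^2/4\tau}$. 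Finite propagation speed, which is equivalent to \ref{item:A2}, says that the $\cos(\sigma\sqrt A)\phi$ term vanishes on $\omega$ for $|\sigma|<d_{\mathrm{ess}}(x_0,\omega)-\epsilon=:\rho$. Only $|\sigma|\geq\rho$ contributes, and these terms are bounded using the uniform boundedness of the wave group. We get
\[
O(\tau)\lesssim \epsilon^{-C}\tau^{-C}e^{(r^2-\rho^2)/(2\tau)},
\]
which, since $r<\rho$, is exponentially small. Making this rigorous without kernel continuity is the main obstacle. The wave operators act on $L^2$, the localization requires $\phi$ of small support, and the $\sigma$-integral must converge in $L^2(\omega)$. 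I would first work with the weak wave kernels tested against $\mathbf 1_\omega$. If needed, I would replace the exact multiplier by a version cut off in $\sigma$, whose error is controlled by the Gaussian tails.

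\textbf{Step 3 (conclusion).} Let $t_0<\tau_h$ and assume, without loss of generality, that $t_0+\delta<T$. Monotonicity of $h$ on $[0,\tau_h]$ gives
\[
\int_0^T h\,N(t+s)\,dt\geq h(t_0)\int_{t_0}^{t_0+\delta}N(t+s)\,dt .
\]
We then let $s\to0$. The right side of \eqref{gen-obs} is at most $\int_0^T O(t+s)\,dt$. For small $t$ this integral is dominated by $e^{(r^2-\rho^2)/2t}$, which is bounded. For large $t$, and in particular when $T=+\infty$, it is finite because $O(\tau)\le N(\tau)$ decays like $e^{-c\tau}$ on the spectral gap side or like $\tau^{-d/2}$ by \ref{item:A1}; if this fails, we truncate the multiplier at low frequencies.

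Combining the three steps gives
\[
h(t_0)\lesssim \Big(\inf_{[t_0,t_0+\delta]}N\Big)^{-1}\leq C e^{-(1-\eta)r^2/(2(t_0+\delta))}.
\]
We choose $\delta=\eta t_0$ and then let $r\uparrow\mathcal{L}(\omega)$ by picking $x_0$ nearly maximizing $d_{\mathrm{ess}}(\cdot,\omega)$. This yields \eqref{main-barrier} for every $\kappa<\tfrac12$ on $(0,\tau_h)$. On $[\tau_h,T)$ the weight $h$ is bounded, so the bound there only requires enlarging $A_{T,\kappa}$. The estimate \eqref{main-varadhan} follows immediately.
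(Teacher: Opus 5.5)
The overall plan follows the paper, but there is a genuine gap: Step 2 is not established for the test data that Step 1 requires. The shared architecture is the packet $(\cosh(r\sqrt A)-1)e^{-sA}$, the Weyl/Laplace lower bound around $\lambda_\star=r^2/(4\tau^2)$, the complex-shifted Gaussian (your kernel is exactly $\Phi_{r,\tau}-\gamma_\tau$) with finite propagation, and a time window $[t_0,(1+\eta)t_0]$ on which $h\ge h(t_0)$.

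\textbf{The two steps use incompatible data.} Take first $\phi=\mu(B(x_0,\epsilon))^{-1}\mathbf 1_{B(x_0,\epsilon)}v$ with $\epsilon$ fixed. Step~2 is then fine: finite speed and $\|\cos(\sigma\sqrt A)\|\le 1$ give $O(\tau)\lesssim\|\phi\|^2e^{(r^2-\rho^2)/(2\tau)}$. But Step~1 is false as written. $N(\tau)$ is an integral against $\langle E(d\lambda)\phi,\phi\rangle$, not against $\nu_{x_0}$. Assumption~\ref{item:A3} concerns only the diagonal measure $\nu_{x_0}$, so it gives no lower bound on the mass of $\langle E(\cdot)\phi,\phi\rangle$ in the windows $[(1-s)\lambda_\star,(1+s)\lambda_\star]$ as $\lambda_\star\to\infty$. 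Letting $\epsilon=\epsilon(\tau)\to0$ does not fix this. You would need to compare $\phi_\epsilon$ with $\delta_{x_0}v$ at frequency $\lambda_\star$ while keeping $\|\phi_\epsilon\|^2=\mu(B(x_0,\epsilon))^{-1}$ subexponential in $1/\tau$. That requires quantitative regularity of spectral kernels at scales below $\tau$, which the hypotheses (they allow discontinuous kernels) do not provide. If instead you use the kernel column $\widetilde g_{r,s,x_0}v$, Step~1 is correct. But then $\cos(\sigma\sqrt A)$ acts on Dirac data and is not an $L^2$ object. So the $|\sigma|\ge\rho$ contribution cannot be bounded by uniform boundedness of the wave group, and cutting off in $\sigma$ still leaves exactly that contribution to bound.

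\textbf{The missing ingredient} is a quantitative bound on the weak wave kernel, uniform in the base points. The paper proves that, for $m>\alpha_1$ and a.e.\ $(x,y)$, $\HSnorm{\langle w(x,y),\psi\rangle}\le C_m\big(\|\langle\cdot\rangle^2\psi\|_\infty+\|\langle\cdot\rangle^2\psi^{(2m)}\|_\infty\big)$. This comes from \ref{item:A1} via the uniform bound $E_x(\lambda)\le C(1+\lambda)^{\alpha_1}$ and the Cauchy--Schwarz estimate $|\Pi_{x,y}|_{\mathrm{HS}}(B)\le\nu_x(B)^{1/2}\nu_y(B)^{1/2}$. Since $\supp w(x,y)\subset\{|s|\ge d(x,y)\}$ by \ref{item:A2}, Lemma~\ref{lem:localize_weighted_sup_norms} restricts the seminorms to $|s|\ge d(x,y)$, at the cost of a factor $1+d(x,y)^{-2m}$. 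Applied to $\psi=\Phi_{r,t}-\gamma_t$, whose derivatives cost only powers of $1/t$, this gives the \emph{pointwise} bound $\HSnorm{\widetilde g_{r,t,x}(y)}\lesssim(1+t^{-2m-1})(1+d(x,y))^{2m+2}e^{(r^2-d(x,y)^2)/(4t)}$ for a.e.\ $y\in\omega$. This is then integrated over $\omega$ using doubling (Lemma~\ref{lem:gauss-doubling-int}).

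\textbf{Smaller corrections.}
\begin{enumerate}
\item For $T=+\infty$, integrability of $O$ at infinity does not follow from \ref{item:A1}, which is a small-time bound; also $\tau^{-d/2}$ is not integrable for $d\le2$. The right reason is that $(\cosh(r\sqrt\lambda)-1)^2$ vanishes to order $\lambda^2$ at $0$. Combined with the polynomial bound on $E_x$, this gives $O(\tau)\lesssim\tau^{-2}$, with no truncation needed.
\item Step~1 needs the full asymptotic \eqref{loc-Weyl} to control the mass of the windows, not just $m_{x_0}([0,\Lambda])\gtrsim\Lambda^{\alpha_2}$.
\item In Step~3, the window length $\eta t_0$ produces a factor $(\eta t_0)^{-1}$, which must be absorbed into the exponential gap.
\end{enumerate}
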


\begin{cor}[Infinite-time constant lower bound]\label{cor:conj_ervzua}
Let $\tilde\gamma_\infty>0$. Assume that there exists $C_\infty>0$ such that, for every $f\in(\ker A)^\perp$ for which the right-hand side is finite,
\[
\int_0^{+\infty} e^{-\tilde\gamma_\infty/t}\|e^{-tA}f\|_{L^2(X;\V)}^2\,dt
\leq C_\infty\int_0^{+\infty}\|e^{-tA}f\|_{L^2(\omega;\V)}^2\,dt.
\]
Then $\tilde\gamma_\infty\geq\frac{1}{2}\mathcal{L}(\omega)^2$. In particular, if $\mathcal{L}(\omega)=+\infty$, no finite $\tilde\gamma_\infty$ can satisfy such an inequality.
\end{cor}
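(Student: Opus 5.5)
The statement to prove is Corollary~\ref{cor:conj_ervzua}. I will deduce it directly from Theorem~\ref{thm:main_thm} in its $(\ker A)^\perp$ variant, noted just before the theorem. The point of that variant is that every test section used there has a spectral multiplier containing the factor $\cosh(r\sqrt\lambda)-1$, so the theorem only needs \eqref{gen-obs} on $(\ker A)^\perp$.

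\textbf{Step 1: normalize the inequality.} Set $T=+\infty$ and $h(t)=C_\infty^{-1}e^{-\tilde\gamma_\infty/t}$ for $t>0$, with $h(0)=0$. Dividing the assumed inequality by $C_\infty$ gives exactly \eqref{gen-obs} with this weight, for every $f\in(\ker A)^\perp$ whose right-hand side is finite. This matches the convention of Definition~\ref{defn:h-obs} for $T=+\infty$.

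\textbf{Step 2: check admissibility.} We need $h\in\Obs([0,+\infty])$. The function $h$ is continuous on $[0,+\infty)$, since $e^{-\tilde\gamma_\infty/t}\to0$ as $t\downarrow0$. It is nonnegative and bounded by $C_\infty^{-1}$. It is nondecreasing on all of $[0,+\infty)$, so any $\tau_h\in(0,+\infty)$ works.

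\textbf{Step 3: apply the logarithmic endpoint.} By \eqref{main-varadhan},
\[
-\tilde\gamma_\infty=\lim_{t\downarrow0}\bigl(-t\log C_\infty-\tilde\gamma_\infty\bigr)=\limsup_{t\downarrow0}t\log h(t)\leq-\tfrac12\mathcal{L}(\omega)^2,
\]
which gives $\tilde\gamma_\infty\geq\frac12\mathcal{L}(\omega)^2$ when $\mathcal{L}(\omega)<+\infty$.

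\textbf{Step 4: the case $\mathcal{L}(\omega)=+\infty$.} Use the second part of Theorem~\ref{thm:main_thm} with $\kappa=\frac14$ and $R$ chosen so that $R^2/4>\tilde\gamma_\infty$. It gives
\[
C_\infty^{-1}e^{-\tilde\gamma_\infty/t}\leq A\,e^{-R^2/(4t)}, \qquad 0<t<+\infty.
\]
Equivalently, $e^{(R^2/4-\tilde\gamma_\infty)/t}\leq AC_\infty$, and the left-hand side tends to $+\infty$ as $t\downarrow0$. This is a contradiction, so no finite $\tilde\gamma_\infty$ can satisfy the inequality.

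\textbf{Main obstacle.} The only delicate point is legitimacy: restricting \eqref{gen-obs} to $(\ker A)^\perp$ and to inputs with finite right-hand side. Both are covered by the remarks preceding the theorem, so the corollary itself involves no further difficulty.
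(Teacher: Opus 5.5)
Your proposal is correct and follows the paper's proof: normalize to $h(t)=C_\infty^{-1}e^{-\tilde\gamma_\infty/t}$ with $T=+\infty$, invoke the $(\ker A)^\perp$ variant of Theorem~\ref{thm:main_thm}, and use its finite-radius version when $\mathcal{L}(\omega)=+\infty$. Reading off the rate from the logarithmic endpoint \eqref{main-varadhan}, rather than from \eqref{main-barrier} with $\kappa\uparrow\frac12$ as the paper does, is an equivalent repackaging (Remark~\ref{rem:endpoint}).
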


In Subsection~\ref{subsec:riem-app}, we verify the assumptions for the Dirichlet Laplacian on a compact connected Riemannian manifold with boundary. Corollary~\ref{cor:conj_ervzua} therefore answers the open problem raised in \cite{ErvedozaZuazua2011,Zuazua2001Carleman}: the infinite-time integrated constant satisfies $\gamma_\infty\geq\mathcal{L}(\omega)^2/2$. The coefficient $1/2$ is optimal as a universal lower-bound constant, as shown by the sharp one-dimensional and radially symmetric model configurations of \cite{ErvedozaZuazua2011}; no constant-prefactor estimate at the endpoint $\kappa=1/2$ is asserted for a general weight.

The same barrier yields a lower bound on the exponential rate of fast observability. The next statement is written in the squared-observability normalization of \eqref{finite-obs}; in the usual control-norm normalization, its constant is divided by two.

\begin{cor}[Cost of fast controls]\label{cor:fast-controls}
Assume that there exist $T_0>0$, $C>0$ and $K>0$ such that, for every $T\in(0,T_0]$,
\begin{equation}\label{fast-obs}
\forall f\in L^2(X;\V),\qquad \|e^{-TA}f\|_{L^2(X;\V)}^2 \leq C\, e^{K/T}\int_0^T\|e^{-tA}f\|_{L^2(\omega;\V)}^2\,dt.
\end{equation}
Then $K\geq \frac{1}{2}\mathcal{L}(\omega)^2$. Equivalently, the usual $L^2$ null-control rate $\mathcal{K}_{\rm heat}$ satisfies $\mathcal{K}_{\rm heat}\geq\mathcal{L}(\omega)^2/4$.
\end{cor}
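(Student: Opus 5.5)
The statement to prove is Corollary~\ref{cor:fast-controls}. The plan is to convert the fast-observability estimate \eqref{fast-obs} into a weighted integrated observability inequality of the form \eqref{gen-obs}, and then apply Theorem~\ref{thm:main_thm}.

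\textbf{Step 1: a weighted inequality on $(0,T_0)$.} Fix $T_0$ from the hypothesis. For every $t\in(0,T_0]$, contractivity of the semigroup gives
\[
\int_0^t\|e^{-sA}f\|_{L^2(\omega;\V)}^2\,ds\leq\int_0^{T_0}\|e^{-sA}f\|_{L^2(\omega;\V)}^2\,ds .
\]
Combining this with \eqref{fast-obs} at time $T=t$ yields
\[
\|e^{-tA}f\|^2\leq Ce^{K/t}\int_0^{T_0}\|e^{-sA}f\|_{L^2(\omega;\V)}^2\,ds.
\]
Now multiply by the weight $e^{-K'/t}$, with $K'>K$ arbitrary, and integrate over $t\in(0,T_0)$. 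We obtain
\[
\int_0^{T_0}e^{-K'/t}\|e^{-tA}f\|^2\,dt\leq C\Big(\int_0^{T_0}e^{-(K'-K)/t}\,dt\Big)\int_0^{T_0}\|e^{-sA}f\|_{L^2(\omega;\V)}^2\,ds .
\]
Set $C_1:=C\int_0^{T_0}e^{-(K'-K)/t}\,dt<\infty$ and $h(t):=C_1^{-1}e^{-K'/t}$, with $h(0)=0$. This $h$ is continuous, bounded, nonnegative and nondecreasing on $[0,T_0)$, so $h\in\Obs([0,T_0])$, and \eqref{gen-obs} holds with horizon $T_0$. The margin $K'>K$ is only there to make $C_1$ finite and clean. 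In fact $\int_0^{T_0}e^{-0/t}\,dt=T_0$ is already finite, so one could take $K'=K$ and $h=e^{-K/t}/(CT_0)$ directly.

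\textbf{Step 2: apply the barrier.} Theorem~\ref{thm:main_thm}, in its logarithmic form \eqref{main-varadhan}, gives
\[
-K'=\limsup_{t\downarrow0}t\log h(t)\leq-\tfrac12\mathcal{L}(\omega)^2 .
\]
Hence $K'\geq\mathcal{L}(\omega)^2/2$, and letting $K'\downarrow K$ (or taking $K'=K$) yields $K\geq\mathcal{L}(\omega)^2/2$. If $\mathcal{L}(\omega)=+\infty$, the same reasoning with any finite $R$ gives $K\geq R^2/2$ for all $R$, a contradiction. So the hypothesis cannot hold in that case, consistently with the stated bound.

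\textbf{Step 3: control-norm convention.} Since $\mathcal{C}_T^2$ equals the optimal constant in \eqref{fast-obs}, a bound $\mathcal{C}_T\lesssim e^{K_c/T}$ gives \eqref{fast-obs} with $K=2K_c$. Therefore $2K_c\geq\mathcal{L}(\omega)^2/2$, i.e.\ $K_c\geq\mathcal{L}(\omega)^2/4$, and taking the infimum gives $\mathcal{K}_{\rm heat}\geq\mathcal{L}(\omega)^2/4$.

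No step is a real obstacle once Theorem~\ref{thm:main_thm} is available. The only point needing care is the choice of horizon: the main theorem allows any $T\in(0,+\infty]$, so working on $(0,T_0)$ is legitimate, and all the work lies in checking that $h$ is admissible.
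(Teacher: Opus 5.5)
Your proposal is correct and follows the paper's proof essentially step for step. You multiply \eqref{fast-obs} by $e^{-K'/T}$, integrate over $(0,T_0]$ to obtain \eqref{gen-obs} with the admissible weight $h(t)=(CT_0)^{-1}e^{-K'/t}$, apply Theorem~\ref{thm:main_thm}, and let $K'\downarrow K$. Your remark that $K'=K$ already works is right. One small correction: the bound $\int_0^t\le\int_0^{T_0}$ comes from nonnegativity of the integrand, not from contractivity of the semigroup. Using the logarithmic endpoint \eqref{main-varadhan} instead of the $\kappa$-family, and writing out the $\mathcal{C}_T^2$ conversion, are only cosmetic differences.
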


On a smooth compact Riemannian manifold, this is precisely Miller's lower bound \cite{Miller2004JDE}, after converting between the control norm and the squared observability constant. The abstract statement extends the same obstruction, for instance, to the sub-Riemannian and metric-graph settings covered below.

\medskip
Now, denote by $E(\cdot)$ the spectral measure of $A$. For every $h\in\Obs([0,T])$, we have
\[\int_0^T h(t)\|e^{-tA}f\|_{L^2(X;\V)}^2\,dt = \int_0^\infty\int_0^T h(t)e^{-2t\lambda}\,dt\,d\scalp{E(\lambda)f}{f}_{L^2(X;\V)}.\]

\begin{defn}\label{defn:HT}
Let $T\in(0,+\infty)$ and $h\in\Obs([0,T])$. We define
\[\forall\lambda\geq 0,\quad H_T(\lambda)=\sqrt{\int_0^T h(t)e^{-2t\lambda}\,dt},\]
which is bounded and continuous on $[0,+\infty)$.
\end{defn}

If \eqref{gen-obs} holds, then $\|H_T(A)f\|_{L^2(X;\V)}^2\leq\int_0^T\|e^{-tA}f\|_{L^2(\omega;\V)}^2\,dt$, providing a passage from integrated to ``final-time'' observability. As noticed in \cite{FernandezCaraZuazua2000ADE}, only the high frequencies contribute (via Laplace's method, see \cite[Chapter 4]{deBruijn1958}).

\begin{cor}[High-frequency constraint]\label{cor:obs_sharp_cor}
Let $T\in(0,+\infty)$ and $h\in\Obs([0,T])$ satisfy \eqref{gen-obs}. If $\mathcal{L}(\omega)<+\infty$, then, for every $0<\beta<1$, there exists $A'_{T,\beta}>0$ such that
\begin{equation}\label{H-sharp}
\forall\lambda\geq0,\qquad H_T(\lambda)\leq A'_{T,\beta}\,e^{-\beta\mathcal{L}(\omega)\sqrt\lambda}.
\end{equation}
If $\mathcal{L}(\omega)=+\infty$, then, for every $R>0$ and every $0<\beta<1$, there exists $A'_{T,R,\beta}>0$ such that $H_T(\lambda)\leq A'_{T,R,\beta}e^{-\beta R\sqrt\lambda}$ for all $\lambda\geq0$.
\end{cor}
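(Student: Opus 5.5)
Corollary~\ref{cor:obs_sharp_cor} will be derived from Theorem~\ref{thm:main_thm} by a Laplace-type estimate on the integral defining $H_T$. I treat the case $\mathcal{L}(\omega)=L<+\infty$; when $\mathcal{L}(\omega)=+\infty$ the same argument with $L$ replaced by $R$ gives the claim. If $L=0$, then $H_T(\lambda)\leq H_T(0)\leq(T\sup h)^{1/2}$, so assume $L>0$. Fix $0<\beta<1$ and choose $\kappa\in(0,\frac12)$ with $\beta^2/2<\kappa$, i.e.\ $\beta<\sqrt{2\kappa}$; this is possible since $\beta<1$. Theorem~\ref{thm:main_thm} gives
\[
h(t)\leq A_{T,\kappa}e^{-\kappa L^2/t}\quad\text{for }0<t<T .
\]
Substituting this into the definition of $H_T$ yields
\[
H_T(\lambda)^2\leq A_{T,\kappa}\int_0^T \exp\Bigl(-\frac{\kappa L^2}{t}-2t\lambda\Bigr)\,dt .
\]

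\textbf{Large $\lambda$: AM--GM.} For $\lambda>0$, the arithmetic--geometric mean inequality gives
\[
\frac{\kappa L^2}{t}+2t\lambda\;\geq\;2\sqrt{2\kappa\lambda}\,L
\]
for every $t>0$, with equality at $t_*=L\sqrt{\kappa/(2\lambda)}$. It follows that
\[
H_T(\lambda)^2\leq A_{T,\kappa}\,T\,e^{-2\sqrt{2\kappa}L\sqrt\lambda},
\qquad\text{so}\qquad
H_T(\lambda)\leq (A_{T,\kappa}T)^{1/2}e^{-\sqrt{2\kappa}L\sqrt\lambda}\leq (A_{T,\kappa}T)^{1/2}e^{-\beta L\sqrt\lambda}.
\]
Since $T<\infty$, the factor $T$ is harmless. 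The margin $\sqrt{2\kappa}>\beta$ is not needed for this step, but it leaves room to absorb polynomial prefactors if one prefers a sharper Laplace estimate.

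\textbf{Small $\lambda$.} On a bounded range of $\lambda$ the bound is trivial: $H_T$ is bounded by $(T\sup h)^{1/2}$ and $e^{-\beta L\sqrt\lambda}$ is bounded below there. At $\lambda=0$ one uses this trivial bound directly. Taking $A'_{T,\beta}=(A_{T,\kappa}T)^{1/2}$ (or the maximum with the trivial constant) completes the proof.

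\textbf{Main obstacle.} There is essentially none beyond the theorem itself. The only care needed is to make the choice $\kappa>\beta^2/2$ explicit; this choice is why $\beta=1$ is excluded, mirroring the exclusion $\kappa=\frac12$ in Theorem~\ref{thm:main_thm}. One should also note that the constant depends on $\kappa$ only through $\beta$.
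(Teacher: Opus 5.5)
Your proof is correct and follows the paper's route: you insert the bound $h(t)\leq A_{T,\kappa}e^{-\kappa\mathcal{L}(\omega)^2/t}$ from Theorem~\ref{thm:main_thm} into $H_T(\lambda)^2$, with $\kappa\in(\beta^2/2,1/2)$. The only difference is that you estimate $\int_0^T e^{-\kappa\mathcal{L}(\omega)^2/t-2\lambda t}\,dt$ by the elementary AM--GM inequality rather than by Laplace's method. This loses only the decaying factor $\lambda^{-3/4}$, which the paper's bound carries but which is not needed, and it gives the estimate for all $\lambda\geq0$ at once, so the choice $\kappa=\beta^2/2$ would already suffice.
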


This shows that the sharp observability inequality \eqref{sharpobs} is optimal within the integrated approach. In this sense, the integrated approach cannot yield a high-frequency weight with a better exponential scale than $e^{-c\mathcal{L}(\omega)\sqrt\lambda}$. It does not, however, establish the optimality of the constants appearing in a Lebeau--Robbiano type inequality.

\begin{remark}\label{rem:infty-rem}
If $\mathcal{L}(\omega)=+\infty$, \eqref{main-varadhan} reads $\limsup_{t\downarrow0}t\log h(t)=-\infty$. Equivalently, Theorem~\ref{thm:main_thm} gives the finite-radius estimate $h(t)\leq A_{T,R,\kappa}e^{-\kappa R^2/t}$ for every $R>0$ and every $0<\kappa<1/2$, so $h$ decays faster than every prescribed scale $e^{-a/t}$ as $t\to0^+$. For an infinite horizon, one may define $H_\infty(\lambda)^2=\int_0^{+\infty}h(t)e^{-2t\lambda}\,dt$ for $\lambda>0$. The same argument gives $H_\infty(\lambda)\leq A'_{c,R,\beta}e^{-\beta R\sqrt\lambda}$ for all $\lambda\geq c$, for every fixed $c>0$, $R>0$ and $0<\beta<1$.
\end{remark}

\subsection{Discussion}\label{subsec:discussion}

Let us emphasize what is genuinely new and what is sharp in our approach, and how it relates to previous works.

The earlier lower bounds must be separated according to their formulations. The estimates $\gamma,\gamma_\infty\geq D(\omega)^2/2$ in \cite{Zuazua2001Carleman} concern integrated inequalities with a prescribed exponential weight, whereas Miller's estimate \cite{Miller2004JDE} concerns the small-time family of null-control costs and yields $\mathcal{K}_{\rm heat}\geq\mathcal{L}(\omega)^2/4$ in the usual control-norm convention, equivalently $K_*\geq\mathcal{L}(\omega)^2/2$ in our squared-observability convention.

Theorem~\ref{thm:main_thm} starts instead from one integrated inequality on one fixed, possibly infinite, horizon, with an arbitrary admissible weight $h$, and extracts its necessarily hidden exponential scale. Its contribution is threefold:
\begin{enumerate}
\item the obstruction is formulated for abstract heat semigroups on doubling metric measure spaces and is governed by the essential maximal distance;
\item the weight is not prescribed to be exponential: the geometry itself forces the quantitative barriers \eqref{main-barrier} for every $0<\kappa<1/2$, equivalently the sharp Varadhan-type logarithmic endpoint \eqref{main-varadhan};
\item the argument applies to a single finite- or infinite-horizon inequality, and in particular yields the previously open maximal-distance bound for the infinite-time constant.
\end{enumerate}
Thus the scale $e^{-\gamma/t}$, ubiquitous in parabolic controllability estimates obtained by Carleman or spectral methods \cite{FernandezCaraZuazua2000ADE,FursikovImanuvilov1996,LeRousseauLebeau2012COCV,LebeauRobbiano1995CPDE}, is intrinsic to integrated observability rather than an artefact of those proofs.

The coefficient $1/2$ is the one predicted by the short-time heat-kernel geometry. In the classical smooth scalar setting, Varadhan's formula \cite{Varadhan1967} reads
\[
\lim_{t\downarrow0}4t\log p_t(x,y)=-d(x,y)^2;
\]
after squaring the kernel, the corresponding geometric exponent is $d(x,y)^2/(2t)$. This logarithmic heuristic underlies Miller's smooth-manifold lower bound \cite{Miller2004JDE}. Our proof reaches the same universal threshold from finite propagation and pointwise spectral measures, without assuming positivity of the semigroup, pointwise Gaussian lower bounds, continuity of the kernels, or compact resolvent.

The coefficient $1/2$ cannot be improved in a universal lower bound. The observability estimates of \cite{ErvedozaZuazua2011} are sharp in one-dimensional and multidimensional radially symmetric model configurations; after conversion to the squared-observability normalization used here, these models exhibit the threshold $\mathcal{L}(\omega)^2/2$. Combined with Corollary~\ref{cor:conj_ervzua}, this identifies the optimal universal coefficient, without asserting validity at the endpoint for a general admissible weight. We emphasize, however, that $\mathcal{L}(\omega)^2$ need not encode the full optimal fast-control cost: the paper \cite{LaurentLeautaud2021APDE} exhibits finer geometric and spectral effects that can make this cost much larger. Our result identifies a universal distance barrier, not a universal exact formula for the cost.

\begin{remark}\label{rem:endpoint}
The quantitative and logarithmic formulations contain the same small-time information. More precisely, when $\mathcal{L}(\omega)<+\infty$, the family of estimates \eqref{main-barrier} for every $0<\kappa<1/2$ is equivalent to
\[
\limsup_{t\downarrow0}t\log h(t)\leq-\frac12\mathcal{L}(\omega)^2,
\]
with the convention $\log0=-\infty$. Indeed, \eqref{main-barrier} implies the logarithmic inequality by letting $\kappa\uparrow1/2$; conversely, the logarithmic inequality yields \eqref{main-barrier} for all sufficiently small $t$, and the boundedness of $h$ extends it to the whole interval $(0,T)$ after enlarging the constant. This is the natural level of comparison with Varadhan's heat-kernel asymptotics \cite{Grigoryan2009_HeatKernel,Varadhan1967}. For the scalar Laplace--Beltrami heat kernel on a closed smooth Riemannian manifold, global two-sided small-time estimates are commonly written, for every $\varepsilon>0$ and up to multiplicative constants, as
\[
c_\varepsilon\, t^{-d/2}\exp\!\left(-\frac{d(x,y)^2}{(4-\varepsilon)t}\right) \leq p_t(x,y)\leq C_\varepsilon\, t^{-d/2}\exp\!\left(-\frac{d(x,y)^2}{(4+\varepsilon)t}\right).
\]
After squaring, the critical denominator is $2$ (with $2-\varepsilon$ in the lower estimate and $2+\varepsilon$ in the upper estimate, after renaming $\varepsilon$), while the displayed polynomial prefactor becomes $t^{-d}$. Thus the restriction $\kappa<1/2$ entails no loss in the geometric exponential rate: it is what allows polynomial, or more generally subexponential, factors to be absorbed. The Varadhan exponential denominator remains the relevant one globally, but an exact two-sided expansion with the standard $t^{-d/2}$ prefactor holds in Euclidean space and locally away from the cut locus; at cut or conjugate points the polynomial prefactor may change (see, e.g., \cite{Ludewig2019StrongShortTime}). Consequently, an estimate of the form $h(t)\leq A e^{-\mathcal{L}(\omega)^2/(2t)}$ with a bounded prefactor is strictly stronger than the logarithmic endpoint proved here. Whether such a constant-prefactor endpoint estimate follows from \eqref{gen-obs} under additional assumptions is a separate question that we do not address.
\end{remark}

The core novelty of the proof lies in the choice of test functions. We insert the localized spectral packet $K_{(\cosh(r\sqrt A)-1)e^{-tA}}(\cdot,x)$ into the integrated inequality and let $r<d_{\mathrm{ess}}(x,\omega)$. Its upper bound on $\omega$ uses finite wave propagation through the classical wave-to-heat Kannai transmutation formula (Lemmas~\ref{resummed-transmutation} and~\ref{resummed-omega}), whereas its lower bound at the base point follows from the local Weyl law (Lemma~\ref{resummed-lower}). Conceptually, this direction of transmutation is complementary to the method of \cite{ErvedozaZuazua2011}, which represents selected wave solutions in terms of heat solutions in order to derive heat observability from wave observability. Here the wave equation is instead used to construct an obstruction to an already assumed heat observability inequality.

\subsection{Organization of the paper}

Section~\ref{sec:framework} introduces the abstract framework: the three hypotheses~\ref{item:A1}--\ref{item:A3} (Subsection~\ref{subsec:hyp}), and the key objects and kernel identities used in the proof of Theorem~\ref{thm:main_thm} (Subsection~\ref{subsec:kernel-id}),
namely the heat kernel, the pointwise spectral measure, and the wave kernel together with the Kannai transmutation formula. Section~\ref{sec:main_proof} contains the proofs of Theorem~\ref{thm:main_thm} and Corollaries~\ref{cor:conj_ervzua}--\ref{cor:obs_sharp_cor}: after a proof outline,
the three key lemmas are stated and proved in Subsection~\ref{subsec:key-lemmas}, and the main results are assembled in Subsection~\ref{subsec:proof_main} and the three subsections that follow it. Section~\ref{sec:applications} verifies Assumptions~\ref{item:A1}--\ref{item:A3} in four geometric settings -- compact Riemannian manifolds and Laplace-type operators, Schr\"odinger operators on $\R^d$, compact sub-Riemannian and Grushin-type structures, and compact metric graphs with $\delta'$-coupling -- and derives the corresponding consequences; a summary table is given at the beginning of the section.

The article is completed by two appendices, which are largely self-contained and may be read as a black box: the reader may take the statements of Section~\ref{sec:framework} for granted and skip directly to the proofs. Appendix~\ref{app:kernel-toolbox} develops the kernel calculus toolbox: the $L^\infty_\Delta$ spaces of integral kernels (Subsection~\ref{app:subsec-Linfty}),
 the heat kernel and the pointwise spectral measure construction under Assumption~\ref{item:A1} (Subsection~\ref{subsec:Ultra-spec}), and the weak wave kernel together with the Kannai transmutation formula and finite speed of propagation
thanks to Assumption~\ref{item:A2} (Subsection~\ref{subsec:wave_kernel}). Appendix~\ref{app:tech-lem} collects the technical lemmas used along the way.

\medskip
\noindent\textbf{Notation.} Throughout, $L^p(X;\V)=L^p(X,\mu;\V)$ denotes the space of $L^p$ sections of $\V$, and $\|\cdot\|_{p\to q}$ is the operator norm from $L^p(X;\V)$ to $L^q(X;\V)$.
Dependence of constants on parameters relevant to limiting statements is displayed explicitly, as in $A_{T,\kappa}$; harmless dependence on the fixed geometric and operator data is suppressed.

\section{Abstract framework and kernel identities used in the proof}\label{sec:framework}

\subsection{Setting and hypotheses}\label{subsec:hyp}

We work on a locally compact, separable metric measure space $(X,d,\mu)$, where $\mu$ is a Radon measure with $\supp\mu=X$ and $0<\mu(B(x,r))<+\infty$ for every $x\in X$ and $r>0$. We assume the \emph{doubling volume property}: there exists $C_D>0$ such that
\begin{equation}\label{framework-VD}
\forall x\in X,\ \forall r>0,\quad \mu(B(x,2r))\leq C_D\,\mu(B(x,r)).
\end{equation}
This classical assumption is satisfied on $\R^d$, on compact Riemannian and sub-Riemannian manifolds, and on compact metric graphs (see \cite{AgrachevBarilariBoscain2019,Grigoryan2009_HeatKernel,sikora2004riesz}). We fix a Euclidean vector bundle $\V\to X$ of rank $r_\V=\rank(\V)$, endowed with the fiberwise inner product $\langle\cdot,\cdot\rangle_{\V_x}$ and norm $|\cdot|_{\V_x}$, and denote by $L^p(X;\V)=L^p(X,\mu;\V)$ the usual Banach spaces of $L^p$ sections.

We consider an unbounded, self-adjoint, nonnegative operator $A$ on $L^2(X;\V)$ satisfying:
\begin{enumerate}[label=(A\arabic*)]
\item\label{item:A1} [\emph{Ultracontractivity}] For every $t>0$,
$e^{-tA}\in\mathcal{L}(L^2(X;\V),L^\infty(X;\V))$, and there exist $\alpha_1>0$ and $C>0$ such that
\[
\forall t\in(0,1],\quad \|e^{-tA}\|_{2\to\infty}\leq C\,t^{-\alpha_1/2}.
\]
\item\label{item:A2} [\emph{Davies--Gaffney}] For all measurable $E,F\subset X$ and $t>0$,
\[
\|1_F e^{-tA}1_E\|_{\mathcal{L}(L^2(X;\V))}\leq e^{-d(E,F)^2/(4t)}.
\]
\item\label{item:A3} [\emph{Local Weyl law}] There exist $\alpha_2>0$, a slowly varying function $\chi_2 : [0,+\infty)\to(0,+\infty)$ (i.e., $\lim_{\lambda\to+\infty}\chi_2(a\lambda)/\chi_2(\lambda)=1$ for every $a>0$), and a measurable function $c:X\to(0,+\infty)$ such that, for $\mu$-a.e.\ $x\in X$,
\begin{equation}\label{loc-Weyl}
E_x(\lambda)\underset{\lambda\to+\infty}{\sim} c(x)\,\lambda^{\alpha_2}\chi_2(\lambda),
\end{equation}
where $E_x(\lambda):=\tr_{\V_x}\bigl(K_{1_{[0,\lambda]}(A)}(x,x)\bigr)$ and $K_{1_{[0,\lambda]}(A)}(x,y)\in\Hom(\V_y,\V_x)$ is the integral kernel of $1_{[0,\lambda]}(A)$.
\end{enumerate}

These three assumptions are verified in all the geometric settings considered in the applications given in Section~\ref{sec:applications}. Let us briefly comment on each.

The qualitative ultracontractivity property $e^{-tA}:L^2(X;\V)\to L^\infty(X;\V)$ for every fixed $t>0$ is classical in heat-kernel analysis; on Riemannian manifolds and on $\R^d$ it follows, for instance, from standard Sobolev estimates (see \cite{GrigoryanHu2014DV,Grigoryan2009_HeatKernel}). By Proposition~\ref{prop:heat-kernel}, this qualitative property is equivalent to the existence, for each $t>0$, of a bounded heat kernel $p_t(x,y)$. Assumption~\ref{item:A1} additionally records the quantitative polynomial rate as $t\downarrow0$. Up to the finite fiber rank, this rate is equivalent to the corresponding on-diagonal estimate: see \eqref{diag_trace_vs_L2} and Remark~\ref{rem:hk-conv}. We do not assume any continuity of $p_t$. This is essential for settings such as $\delta'$-coupled Laplacians on metric graphs, where eigenfunctions may be discontinuous at vertices and the heat kernel is generally not jointly continuous on $X\times X$; see Subsection~\ref{subsec:graphs-app}.

Assumption~\ref{item:A2} is the standard Davies--Gaffney off-diagonal estimate. By \cite{sikora2004riesz}, it is equivalent to finite propagation speed for the wave equation associated with $A$. Together with on-diagonal estimates, it is also the starting point of the abstract Gaussian upper-bound theory of \cite{CoulhonSikora2008}. We use its finite-propagation content directly through the exact wave-to-heat (or heat-from-wave) Kannai transmutation formula. This avoids assuming pointwise Gaussian estimates while retaining the geometric scale responsible for the threshold $1/2$. The assumption holds in our examples either by Lipschitz-weight energy arguments (Riemannian and graph cases) or by general results (sub-Riemannian sub-Laplacians \cite{Melrose1986PropagationWaveGroupSubelliptic}, and nonnegative Schr\"odinger operators on $\R^d$ via Feynman--Kac). With our normalization, the associated wave equation propagates at speed at most one.

Assumption~\ref{item:A3} is a form of local Weyl law on the diagonal. On a smooth compact Riemannian manifold it reduces to the classical H\"ormander spectral asymptotics \cite{Hormander1968Acta}, with $\alpha_2=d/2$ and $\chi_2\equiv1$; the coefficient $c$ is smooth and positive, and is constant in the scalar unweighted case. It is used only to obtain a lower bound on the $L^2(X;\V)$ norm of our test section. The slowly varying correction $\chi_2$ is included for flexibility: logarithmic factors may occur in global Weyl laws of non-equiregular sub-Riemannian structures \cite{CdVHT_AHL_2021,ColinDeVerdiereHillairetTrelat2022arXiv2212_02920}. In all the applications below, however, the pointwise law at $\mu$-a.e.\ point is purely polynomial ($\chi_2\equiv1$). In equiregular sub-Riemannian settings one has $\alpha_1=\alpha_2$, whereas the Grushin-type example has $\alpha_1>\alpha_2$. Concrete values of $(\alpha_2,\chi_2,c)$, together with $\alpha_1$ where it differs, are summarized in the table at the beginning of Section~\ref{sec:applications}.

\begin{remark}\label{rem:A3-remarks}
Two comments on~\ref{item:A3} are in order.
First, $c$ is not assumed to be bounded: in the almost-Riemannian example of Subsection~\ref{subsec:sR-app}, $c$ blows up near the singular set. Second, Assumption~\ref{item:A3} is used only at a single, well-chosen base point (see the proof of Theorem~\ref{thm:main_thm}). It would suffice to assume that, for every $\delta<\mathcal{L}(\omega)$, the set of points $x\in U_\delta$ at which \eqref{loc-Weyl} holds has positive $\mu$-measure. We keep the $\mu$-a.e.\ formulation for readability.
\end{remark}

We emphasize that we assume neither compact resolvent nor that $A$ derives from a \emph{regular strongly local} Dirichlet form (which are the standard assumptions in the literature, see \cite{Davies1989,DaviesSimon1984,GrigoryanHu2014DV,GrigorYan2014HKMMS}), so that our framework covers a wide variety of settings, including examples with non-continuous kernels, such as $\delta'$-coupled Laplacians on metric graphs (see Section~\ref{subsec:graphs-app}).
This is why we develop a kernel calculus tailored to vector bundles and to kernels defined only $\Delta$-a.e.; it is outlined in the next subsection and proved in Appendix~\ref{app:kernel-toolbox}.

\subsection{Overview of kernel calculus}\label{subsec:kernel-id}

This subsection presents, in a self-contained but minimal form, the kernel-calculus identities used in the proof of Theorem~\ref{thm:main_thm}.
On a smooth compact manifold, compact resolvent and smooth eigenfunctions make the same objects explicit through familiar eigenfunction expansions. We package the abstract framework so that the reader may keep this model picture in mind and proceed directly to Section~\ref{sec:main_proof} without entering the appendix.
We emphasize, however, that extending these objects to our generality is not just a matter of formalism: the constructions are genuinely subtler, and the careful development carried out in Appendix~\ref{app:kernel-toolbox}, to which we refer for complete statements and proofs, is what makes them rigorous in the absence of compact resolvent or kernel continuity.

The proof of Theorem~\ref{thm:main_thm} is based on the analysis of a single test section, the spectral packet $\widetilde g_{r,t,x}=K_{(\cosh(r\sqrt A)-1)e^{-tA}}(\cdot,x)$, and crucially uses two ingredients: a closed-form expression for its $L^2$-norm in terms of the spectral data of $A$ at the point $x$ (used as a lower bound via Assumption~\ref{item:A3}), and pointwise estimates on $\widetilde g_{r,t,x}(y)$ for $y$ away from $x$ via the wave kernel and the transmutation formula (used as an upper bound on $\omega$ via Assumption~\ref{item:A2}).
 In our generality, building these tools is delicate, as two classical mechanisms are unavailable:
\begin{itemize}
    \item We do \emph{not} assume that $A$ has compact resolvent, so there need not be a discrete eigenbasis $(\varphi_n)$ from which to form the pointwise expansion $\sum_n\phi(\lambda_n)\varphi_n(x)\otimes\varphi_n(y)^\flat$ (where $^\flat$ denotes the Riesz identification).
    \item We do \emph{not} assume that $p_t$ (or any other $K_{\phi(A)}$) is continuous, so pointwise diagonal values $K(x,x)$ are not defined in the usual sense.
\end{itemize}
A priori, the conjunction of these two facts even prevents one from giving a meaning to the diagonal restriction $K(x,x)$ for a.e.\ $x$. In the familiar smooth compact-resolvent setting, pointwise eigenfunction representatives provide such a diagonal, and continuity does so as well; without either mechanism, the ordinary $\mu\otimes\mu$-a.e.\ equivalence class of a kernel is too coarse.

\medskip\noindent\textbf{A canonical framework for diagonals.}
This first issue is in fact resolved relatively simply. The key observation, established in Proposition~\ref{prop:heat-kernel} under~\ref{item:A1} alone, is that the heat kernel $p_t$ — although not continuous in general — is canonically defined not merely $\mu\otimes\mu$-a.e.,
 but on a full-measure square $(X\setminus N)^2$ for some $\mu$-null set $N\subset X$, with the symmetry $p_t(x,y)=p_t(y,x)^*$ (where $^*$ denotes the adjoint of the linear map $p_t(y,x)\in\Hom(\V_x,\V_y)$, so that $p_t(y,x)^*\in\Hom(\V_y,\V_x)$) and the semigroup identity holding on this square. We refer to this stronger form of equivalence as \emph{$\Delta$-almost everywhere} ($\Delta$-a.e.\ for short, see Definition~\ref{def:Delta-ae}). Its key feature is that it makes diagonal restrictions canonical: if $K$ is bounded and defined $\Delta$-a.e., then the diagonal $x\mapsto K(x,x)$ is unambiguously defined for $\mu$-a.e.\ $x\in X$, independently of the chosen representative. By Proposition~\ref{prop:heat-kernel}, the heat kernel is naturally a $\Delta$-a.e.\ defined object, and as we shall see this canonical structure propagates to all the kernels we consider in the sequel. From now on, all kernels $K(x,y)\in\Hom(\V_y,\V_x)$ are tacitly understood as $\Delta$-a.e.\ defined.

\medskip\noindent\textbf{Pointwise spectral measures.} The second issue, the absence of compact resolvent, is the genuine obstacle, and one of the main technical devices of the paper is a substitute for the spectral expansion.
In a smooth compact-resolvent setting, the kernel $K_{\phi(A)}$ has the familiar eigenfunction expansion $\sum_n\phi(\lambda_n)\varphi_n(x)\otimes\varphi_n(y)^\flat$,
which can be rewritten in measure-theoretic form as $K_{\phi(A)}(x,y)=\int\phi(\lambda)\,d\Pi_{x,y}(\lambda)$, where $\Pi_{x,y}=\sum_n\delta_{\lambda_n}\,\varphi_n(x)\otimes\varphi_n(y)^\flat$ is a $\Hom(\V_y,\V_x)$-valued measure on $[0,+\infty)$ supported on the spectrum. We construct such an object in our general setting directly from the heat kernel and the functional calculus of $A$, without relying on an eigenfunction expansion or compact resolvent.
More precisely, Proposition~\ref{prop:spectral-mes} shows that, under~\ref{item:A1} alone, there exists a unique family of locally finite $\Hom(\V_y,\V_x)$-valued (signed) measures $\Pi_{x,y}$ on $[0,+\infty)$, defined for $\Delta$-a.e.\ $(x,y)\in X\times X$ by an explicit formula involving $p_{t/2}$ and the bounded operators $1_B(A)e^{tA}$ for bounded Borel $B\subset[0,+\infty)$. We call $\Pi_{x,y}$ the \emph{pointwise spectral measure} of $A$ at $(x,y)$. Note that the canonical $\Delta$-a.e.\ structure ensures, in particular, that the diagonal $\Pi_{x,x}$ is well defined for $\mu$-a.e.\ $x$.

The pointwise spectral measure provides a unified pointwise spectral representation for functions of $A$. By Proposition~\ref{prop:spectral-mes}, for every bounded compactly supported Borel $\phi$, the operator $\phi(A)$ has integral kernel
\begin{equation}\label{kernel-formula}
K_{\phi(A)}(x,y)=\int_0^{+\infty}\phi(\lambda)\,d\Pi_{x,y}(\lambda)\qquad\Delta\text{-a.e.},
\end{equation}
and defining $\nu_x=\tr_{\V_x}(\Pi_{x,x})$, we get the following identity for $\mu$-a.e.\ $x\in X$:
\begin{equation}\label{plancherel}
\int_X\HSnorm{K_{\phi(A)}(x,y)}^2\,d\mu(y)=\int_0^{+\infty}|\phi(\lambda)|^2\,d\nu_x(\lambda).
\end{equation}
This identity is precisely what allows us to express $L^2$-norms of kernel-type test functions in terms of intrinsic spectral data of $A$ at the point $x$. Furthermore, the scalar spectral function
\[E_x(\lambda)=\nu_x([0,\lambda])=\tr_{\V_x}\bigl(K_{1_{[0,\lambda]}(A)}(x,x)\bigr)\]
generalizes the classical H\"ormander spectral function to our setting, and Assumption~\ref{item:A3} reads exactly $E_x(\lambda)\sim c(x)\lambda^{\alpha_2}\chi_2(\lambda)$ as $\lambda\to+\infty$.
From the quantitative ultracontractivity~\ref{item:A1}, we prove Proposition~\ref{prop:kernel-weyl}, which extends the kernel formula~\eqref{kernel-formula} to every measurable $\phi$ with $|\phi(\lambda)|\leq C(1+\lambda)^{-\alpha_1-\varepsilon}$. In particular, this applies to $\phi(\lambda)=\big(\cosh(r\sqrt\lambda)-1\big)e^{-t\lambda}$, which decays faster than any power of $\lambda$, and thus defines the test section $\widetilde g_{r,t,x}$.

Now applying~\eqref{plancherel} to the spectral packet $\phi(\lambda)=\big(\cosh(r\sqrt\lambda)-1\big)e^{-t\lambda}$, we obtain one of the key identities at the heart of the proof, which we call the \emph{pointwise Plancherel identity}: for $\mu$-a.e.\ $x\in X$ and every $r,t>0$,
\begin{equation*}
\int_X\HSnorm{\widetilde g_{r,t,x}(y)}^2\,d\mu(y)=\int_0^{+\infty}\big(\cosh(r\sqrt\lambda)-1\big)^2 e^{-2t\lambda}\,d\nu_x(\lambda).
\end{equation*}
It plays the role that Plancherel's theorem plays in the Euclidean Fourier computation. This is the content of Lemma~\ref{resummed-transmutation}; combined with the local Weyl asymptotics of $\nu_x$ (Assumption~\ref{item:A3}), it provides the lower bound on the $L^2(X)$-norm of $\widetilde g_{r,t,x}$ established in Lemma~\ref{resummed-lower}.

\medskip\noindent\textbf{Wave kernel and transmutation.} The complementary upper bound on $\HSnorm{\widetilde g_{r,t,x}(y)}$ for $y$ away from $x$ (Lemma~\ref{resummed-omega}) requires off-diagonal information, provided by Assumption~\ref{item:A2}. Using again the kernel formula~\eqref{kernel-formula} with the bounded oscillatory function $C_\psi(\lambda)=\int_\R\psi(s)\cos(s\sqrt\lambda)\,ds$ for $\psi$ smooth with sufficient decay, one constructs in Appendix~\ref{app:kernel-toolbox} a weak wave kernel $w(x,y)$, which is a $\Hom(\V_y,\V_x)$-valued distribution on $\R$ defined for $\Delta$-a.e.\ $(x,y)$ (Proposition~\ref{prop:weak-wave}). It is related to the heat kernel by the Kannai transmutation formula (Lemma~\ref{lem:transmutation}):
\[p_t(x,y)=\langle w(x,y),\gamma_t\rangle,\qquad\gamma_t(s)=(4\pi t)^{-1/2}e^{-s^2/(4t)}.\]
Moreover, Proposition~\ref{prop:weak-wave} shows that $w$ satisfies the wave equation $A\langle w,\psi\rangle=-\langle w,\psi''\rangle$ ($\Delta$-a.e.) in the weak sense for every test function $\psi$.
Thanks to the transmutation formula (Lemma~\ref{resummed-transmutation}), the whole packet is represented through the wave kernel against a Schwartz weight: for $\Delta$-a.e.\ $(x,y)$,
\begin{equation}\label{transmutation-Ak}
\widetilde g_{r,t,x}(y)=\big\langle w(y,x),\,\Phi_{r,t}-\gamma_t\big\rangle,\qquad \Phi_{r,t}(s)=(4\pi t)^{-1/2}e^{(r^2-s^2)/(4t)}\cos\!\big(\tfrac{rs}{2t}\big).
\end{equation}
The role of Assumption~\ref{item:A2} is then captured by the support property
\[
\supp w(x,y)\subset\R\setminus(-d(x,y),d(x,y))\qquad \text{for $\mu\otimes\mu$-a.e.\ $(x,y)$},
\]
deduced from Proposition~\ref{prop:fsp_wave_ker} in Remark~\ref{rem:finite-speed}. Since the wave kernel issued from $x$ reaches a point $y$ only for $|s|\geq d(x,y)$, while $\Phi_{r,t}-\gamma_t$ is exponentially small for $|s|\geq d(x,y)>r$, the pairing~\eqref{transmutation-Ak} is exponentially small on $\omega$; this is the mechanism behind Lemma~\ref{resummed-omega}.
Intuitively, a singularity placed at distance $d(x,\omega)$ from the observation set cannot be detected by the wave flow before that time; after transmutation, this geometric delay becomes the exponential barrier for the heat flow.

\section{Proof of the geometric barrier}\label{sec:main_proof}
In this section, we prove Theorem~\ref{thm:main_thm} and its corollaries: we first describe the strategy, then establish the key lemmas, and finally assemble the proof.

Each assumption plays a distinct role. Assumption~\ref{item:A1} provides the heat kernel and the pointwise spectral measure, hence the pointwise Plancherel identity for the columns of $K_{\phi(A)}$, and it ensures the existence of the relevant kernels and their membership in $L^\infty_\Delta$ (used throughout the toolbox of Appendix~\ref{app:kernel-toolbox}). Assumption~\ref{item:A2}, through the transmutation formula and the finite speed of propagation of the wave kernel, makes the test object exponentially small on $\omega$. Assumption~\ref{item:A3}, the pointwise Weyl law, is used only at a single base point, to bound the test object from below.

\paragraph{Proof outline.}
The proof is driven by the single test section $\widetilde g_{r,t,x}=K_{(\cosh(r\sqrt A)-1)\,e^{-tA}}(\cdot,x)$, for a base point $x$ and a radius $0<r<d_{\mathrm{ess}}(x,\omega)$, which is a sharp wave packet of Paley--Wiener type: in the $\sqrt A$ variable, $\cosh(r\sqrt\lambda)$ is the even multiplier of exponential type $r$ with maximal growth along the spectrum, so $\widetilde g_{r,t,x}$ amplifies the heat semigroup as much as a propagation radius $r$ allows, and no more. The strategy is to show that this packet is \emph{sharp} in two opposite directions and to read off the resulting scale. Lemma~\ref{resummed-transmutation} provides the spectral identity defining $\widetilde g_{r,t,x}$ and the pointwise Plancherel formula for its norm; the two competing estimates are then:
\begin{itemize}
\item a \emph{lower bound}, expressing that the packet is genuinely large in $L^2(X)$: by Lemma~\ref{resummed-lower}, $\|\widetilde g_{r,t,x}\|_{L^2(X)}^2\gtrsim e^{(1-\eta)\,r^2/(2t)}$ as $t\to0$, the rate $r^2/(2t)$ being fixed by the pointwise Weyl density at $x$ through~\ref{item:A3};
\item an \emph{upper bound on $\omega$}, expressing that the packet is exponentially small there: since its propagation radius $r$ is strictly below $\delta:=d_{\mathrm{ess}}(x,\omega)$, finite speed of propagation (Assumption~\ref{item:A2}) gives $\int_\omega\HSnorm{\widetilde g_{r,t,x}}^2\lesssim e^{-(\delta^2-r^2)/(2t)}$ for small $t$ (Lemma~\ref{resummed-omega}), the large-time regime being handled by Lemma~\ref{resummed-large-t}.
\end{itemize}
We apply \eqref{gen-obs} to the regularized packet $f_{s,\ell}=\widetilde g_{r,s,x}e_\ell$, choose $s=\tau^2$, and restrict the resulting time integral to $u\in[\tau+s,(1+\varepsilon)\tau+s]$. The two estimates above then give directly $h(\tau)\leq A e^{-\kappa r^2/\tau}$ for every $0<\kappa<1/2$. The matching exponential scales in the lower and upper bounds allow $r\uparrow d_{\mathrm{ess}}(x,\omega)$; optimizing the base point then yields the barrier $\mathcal{L}(\omega)^2/t$ with the universal threshold $1/2$. Equivalently, the resulting family of quantitative estimates gives the logarithmic endpoint \eqref{main-varadhan}.

\subsection{Key lemmas}\label{subsec:key-lemmas}

We start by establishing the spectral identity underlying the test section $\widetilde g_{r,t,x}$. The pointwise Plancherel formula is then recalled immediately after the lemma.

\begin{lem}[Packet transmutation]\label{resummed-transmutation}
For all $r > 0$ and $t > 0$, the function
\begin{equation}\label{Phi-def}
\Phi_{r,t}(s) = \frac{1}{\sqrt{4\pi t}}\, e^{(r^2 - s^2)/(4t)}\cos\Big(\frac{r s}{2 t}\Big), \qquad s \in \R,
\end{equation}
belongs to $\mathcal{S}(\R)$, and one has, for every $\lambda \geq 0$,
\begin{equation}\label{cosh-Cpsi}
\cosh(r\sqrt{\lambda})\, e^{-t\lambda} = \mathcal{C}_{\Phi_{r,t}}(\lambda), \qquad \big(\cosh(r\sqrt{\lambda}) - 1\big)\, e^{-t\lambda} = \mathcal{C}_{\Phi_{r,t} - \gamma_t}(\lambda).
\end{equation}
\end{lem}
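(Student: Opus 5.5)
The plan is to reduce everything to the classical Gaussian Fourier transform, using the definition $\mathcal{C}_\psi(\lambda)=\int_\R\psi(s)\cos(s\sqrt\lambda)\,ds$ from Subsection~\ref{subsec:kernel-id}. Write $\xi=\sqrt\lambda\geq0$.

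\emph{Step 1: $\Phi_{r,t}\in\mathcal{S}(\R)$.} The function $\Phi_{r,t}$ is the product of the constant $(4\pi t)^{-1/2}e^{r^2/(4t)}$, the Gaussian $e^{-s^2/(4t)}$, and the smooth function $\cos(rs/(2t))$. The latter and all its derivatives are bounded. Hence every derivative of $\Phi_{r,t}$ is a polynomial times a Gaussian, which gives Schwartz decay. The same argument applies to $\gamma_t$, so $\Phi_{r,t}-\gamma_t\in\mathcal{S}(\R)$ as well.

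\emph{Step 2: the first identity in \eqref{cosh-Cpsi}.} Use the product-to-sum formula
\[
\cos\Bigl(\frac{rs}{2t}\Bigr)\cos(s\xi)=\tfrac12\Bigl[\cos\bigl(s(\xi+\tfrac{r}{2t})\bigr)+\cos\bigl(s(\xi-\tfrac{r}{2t})\bigr)\Bigr].
\]
Then apply the standard Gaussian integral $\int_\R\gamma_t(s)\cos(s\eta)\,ds=e^{-t\eta^2}$, valid for all real $\eta$. This gives
\[
\mathcal{C}_{\Phi_{r,t}}(\lambda)=\frac{e^{r^2/(4t)}}{2}\Bigl(e^{-t(\xi+r/(2t))^2}+e^{-t(\xi-r/(2t))^2}\Bigr).
\]
Expanding, $t(\xi\pm r/(2t))^2=t\xi^2\pm r\xi+r^2/(4t)$. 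The factor $e^{r^2/(4t)}$ therefore cancels exactly, and we obtain $\tfrac12 e^{-t\lambda}(e^{-r\xi}+e^{r\xi})=\cosh(r\sqrt\lambda)e^{-t\lambda}$.

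An equivalent route, useful as a cross-check, is to note that $\eta\mapsto\int\gamma_t(s)\cos(s\eta)\,ds=e^{-t\eta^2}$ extends to an entire function. Evaluating it at $\eta=\xi\pm ir/(2t)$ gives the same result, since $\cos(s(\xi+i\theta))$ averaged with the conjugate produces the real factor $\cos(s\xi)\cosh(s\theta)$. I prefer the real product-to-sum computation because it avoids contour arguments.

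\emph{Step 3: the second identity.} By the same Gaussian formula, $\mathcal{C}_{\gamma_t}(\lambda)=e^{-t\lambda}$. Since $\psi\mapsto\mathcal{C}_\psi$ is linear on integrable functions, subtracting this from the first identity yields $(\cosh(r\sqrt\lambda)-1)e^{-t\lambda}=\mathcal{C}_{\Phi_{r,t}-\gamma_t}(\lambda)$.

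\emph{Main obstacle.} There is no serious obstacle here. The only points needing care are the exact cancellation of the prefactor $e^{r^2/(4t)}$ in Step 2, and the remark that all integrals converge absolutely, so that splitting the integrand and using linearity is legitimate. Both follow from the Gaussian factor in the integrands.
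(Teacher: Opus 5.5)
Your Steps 1--3 are correct and follow the paper's proof essentially verbatim: you use the product-to-sum formula, then the Gaussian cosine integral at the real frequencies $\sqrt\lambda\pm r/(2t)$, then the exact cancellation of $e^{r^2/(4t)}$, and finally subtract $\mathcal{C}_{\gamma_t}(\lambda)=e^{-t\lambda}$. Drop the complex ``cross-check'', though, because it is wrong as stated: averaging $e^{-t\eta^2}$ over $\eta=\sqrt\lambda\pm ir/(2t)$ computes $\int_\R\gamma_t(s)\cosh(rs/(2t))\cos(s\sqrt\lambda)\,ds=e^{r^2/(4t)}\cos(r\sqrt\lambda)\,e^{-t\lambda}$, which is not the $\Phi_{r,t}$ identity; a correct complex route would instead shift the variable $s$, using $\Phi_{r,t}(s)=\mathrm{Re}\,\gamma_t(s-ir)$ and a contour shift.
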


\begin{proof}
Using \eqref{Phi-def}, we have $\Phi_{r,t} \in \mathcal{S}(\R)$. Recall that $\int_\R e^{-s^2/(4t)}\cos(s\omega)\, ds = \sqrt{4\pi t}\, e^{-t\omega^2}$ for every $\omega \in \R$. Using $\cos(a)\cos(b) = \frac{1}{2}\big(\cos(a - b) + \cos(a + b)\big)$ with $a = r s/(2t)$ and $b = s\sqrt{\lambda}$, we obtain
\begin{align*}
\mathcal{C}_{\Phi_{r,t}}(\lambda)
&= \frac{e^{r^2/(4t)}}{\sqrt{4\pi t}} \int_\R e^{-s^2/(4t)}\cos\Big(\frac{r s}{2 t}\Big)\cos(s\sqrt{\lambda})\, ds \\
&= \frac{e^{r^2/(4t)}}{2\sqrt{4\pi t}} \int_\R e^{-s^2/(4t)}\Big(\cos\big(s(\frac{r}{2t} - \sqrt{\lambda})\big) + \cos\big(s(\frac{r}{2t} + \sqrt{\lambda})\big)\Big)\, ds \\
&= \frac{e^{r^2/(4t)}}{2}\Big(e^{-t(\frac{r}{2t} - \sqrt{\lambda})^2} + e^{-t(\frac{r}{2t} + \sqrt{\lambda})^2}\Big) = \frac{1}{2}\big(e^{r\sqrt{\lambda}} + e^{-r\sqrt{\lambda}}\big)\, e^{-t\lambda} = \cosh(r\sqrt{\lambda})\, e^{-t\lambda},
\end{align*}
where we used $-t\big(\frac{r}{2t} \mp \sqrt{\lambda}\big)^2 = -\frac{r^2}{4t} \pm r\sqrt{\lambda} - t\lambda$. The second identity in \eqref{cosh-Cpsi} follows by subtracting $\mathcal{C}_{\gamma_t}(\lambda) = e^{-t\lambda}$ (Lemma~\ref{lem:transmutation}).
\end{proof}

For $r > 0$ and $t > 0$, the function $\lambda \mapsto \big(\cosh(r\sqrt{\lambda}) - 1\big)e^{-t\lambda}$ decays faster than any negative power of $\lambda$; hence, by Proposition~\ref{prop:kernel-weyl}, it defines a kernel in $L^\infty_\Delta$, and for a.e.\ $x \in X$ we have
\[
\widetilde{g}_{r,t,x}(\cdot) = K_{(\cosh(r\sqrt{A}) - 1)e^{-tA}}(\cdot,x) = \sum_{k \geq 1} \frac{r^{2k}}{(2k)!}\, (A^k p_t)(\cdot,x) \in L^2(X;\V).
\]
Here $A^kp_t(\cdot,x)=K_{\lambda^ke^{-t\lambda}(A)}(\cdot,x)$ in the sense of Corollary~\ref{cor:A-on-kernel}, and the series converges in $L^2(X;\V)$: the partial sums correspond to the multipliers $\phi_n(\lambda)=\sum_{k=1}^n \frac{(r\sqrt\lambda)^{2k}}{(2k)!}\,e^{-t\lambda}$, which converge pointwise to $(\cosh(r\sqrt\lambda)-1)e^{-t\lambda}$ and are dominated by $\cosh(r\sqrt\lambda)\,e^{-t\lambda}\in L^2(\nu_x)$, so that the pointwise Plancherel identity \eqref{L2rowHS} yields convergence of the columns in $L^2(X;\V)$.
By the pointwise Plancherel identity \eqref{L2rowHS}, for a.e. $x \in X$,
\begin{equation}\label{gtilde-norm}
\int_X \HSnorm{\widetilde{g}_{r,t,x}(y)}^2\, d\mu(y) = \int_0^{+\infty} \big(\cosh(r\sqrt{\lambda}) - 1\big)^2 e^{-2t\lambda}\, d\nu_x(\lambda),
\end{equation}
while, by Lemma~\ref{resummed-transmutation} and the definition of $w$, for $\Delta$-a.e. $(x,y)$,
\begin{equation}\label{gtilde-wave}
\widetilde{g}_{r,t,x}(y) = \scalp{w(y,x)}{\Phi_{r,t} - \gamma_t}.
\end{equation}
We have the following two-sided control of $\widetilde{g}_{r,t,x}$.

\begin{lem}[Lower bound, small time]\label{resummed-lower}
For a.e. $x \in X$, every $\eta \in (0,1)$ and every $r > 0$, there exist $c_\eta>0$ (independent of $r$ and $x$) and $\tau_{\eta,r,x}>0$ such that, for every $t\in(0,\tau_{\eta,r,x})$,
\begin{equation}\label{lowerA}
\int_X \HSnorm{\widetilde{g}_{r,t,x}(y)}^2\, d\mu(y) \geq c_\eta\, c(x)\, e^{(1 - \eta)\, r^2/(2t)}.
\end{equation}
\end{lem}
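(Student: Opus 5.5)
The plan is to start from the pointwise Plancherel identity \eqref{gtilde-norm} and bound the spectral integral from below using only a window of frequencies near the saddle point of the integrand. Fix $x$ in the full-measure set where \eqref{gtilde-norm} and the local Weyl law \eqref{loc-Weyl} both hold. For $\lambda\ge 1$ one has $\cosh(r\sqrt\lambda)-1\ge \tfrac14 e^{r\sqrt\lambda}$ as soon as $r\sqrt\lambda\ge 2$, say. The integrand is then at least $\tfrac1{16}e^{2r\sqrt\lambda-2t\lambda}$. The exponent $\psi_t(\lambda)=2r\sqrt\lambda-2t\lambda$ is maximized at $\sqrt{\lambda_*}=r/(2t)$, where $\psi_t(\lambda_*)=r^2/(2t)$. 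So it suffices to show that $\nu_x$ puts enough mass on a window around $\lambda_*$ on which $\psi_t\ge(1-\eta)r^2/(2t)$.

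Concretely, write $\sqrt\lambda=\theta\, r/(2t)$, so that $\psi_t=\frac{r^2}{2t}(2\theta-\theta^2)$. Choose $\theta_\pm=1\pm\sqrt{\eta/2}$, so that $2\theta-\theta^2\ge 1-\eta/2$ on $[\theta_-,\theta_+]$. Set $\Lambda_\pm=\theta_\pm^2 r^2/(4t^2)$. Then
\[
\int_X\HSnorm{\widetilde g_{r,t,x}(y)}^2\,d\mu(y)\;\ge\;\tfrac1{16}\,e^{(1-\eta/2)r^2/(2t)}\,\bigl(E_x(\Lambda_+)-E_x(\Lambda_-^{-})\bigr),
\]
with the left-limit convention, or after replacing $\Lambda_-$ by a slightly larger value. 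Since $\Lambda_\pm\to\infty$ as $t\to0$ with the fixed ratio $\Lambda_+/\Lambda_-=(\theta_+/\theta_-)^2=:a>1$, Assumption~\ref{item:A3} and the slow variation of $\chi_2$ give
\[
E_x(\Lambda_+)-E_x(\Lambda_-)\sim c(x)\chi_2(\Lambda_-)\Lambda_-^{\alpha_2}\,(a^{\alpha_2}-1).
\]
Hence, for $t$ small enough (depending on $\eta,r,x$), this difference is at least $\tfrac12(a^{\alpha_2}-1)\,c(x)\,\chi_2(\Lambda_-)\Lambda_-^{\alpha_2}$.

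The last step is to absorb the subexponential factor $\chi_2(\Lambda_-)\Lambda_-^{\alpha_2}$. Slowly varying functions satisfy $\chi_2(\lambda)\ge\lambda^{-\epsilon}$ for large $\lambda$; this follows from the Karamata representation, or from the elementary fact $\log\chi_2(\lambda)/\log\lambda\to0$. So $\chi_2(\Lambda_-)\Lambda_-^{\alpha_2}\ge1$ once $\Lambda_-$ is large, i.e.\ for $t$ small. The remaining factor $e^{(1-\eta/2)r^2/(2t)}$ dominates $e^{(1-\eta)r^2/(2t)}$. This yields \eqref{lowerA} with $c_\eta=\tfrac1{32}(a^{\alpha_2}-1)$, which depends only on $\eta$ (and $\alpha_2$), not on $r$ or $x$. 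All dependence on $r,x$ (through $c(x)$, the Weyl convergence rate, and the threshold $r\sqrt{\Lambda_-}\ge2$) goes into $\tau_{\eta,r,x}$.

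The main obstacle is the use of \eqref{loc-Weyl} to extract mass on a window rather than on a half-line. An asymptotic equivalence for $E_x$ controls increments only over windows of fixed multiplicative ratio, which is why the window is chosen with fixed ratio $a$ rather than shrinking. The slow-variation lower bound is needed because $\chi_2$ might tend to zero. If one wished to avoid the Karamata-type bound, one could absorb $\chi_2(\Lambda_-)$ directly: $\log\chi_2(\lambda)=o(\log\lambda)$, and $\log\lambda=O(\log(1/t))=o(1/t)$.
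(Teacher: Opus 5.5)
Your proof is correct and follows essentially the same route as the paper. Both arguments use the pointwise Plancherel identity, the bound $(\cosh(r\sqrt\lambda)-1)^2\geq\frac{1}{16}e^{2r\sqrt\lambda}$, restriction to a window of fixed multiplicative ratio around the saddle point $\lambda_\star=r^2/(4t^2)$, the local Weyl law to bound the $\nu_x$-mass of that window from below, and absorption of the subexponential factor $\lambda_\star^{\alpha_2}\chi_2(\lambda_\star)$. The only difference is cosmetic: you parametrize the window exactly in $\sqrt\lambda$ via $2\theta-\theta^2=1-(1-\theta)^2$, whereas the paper uses a second-order Taylor bound in $\lambda$ on $[(1-s)\lambda_\star,(1+s)\lambda_\star]$.
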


\begin{proof}
Fix $\eta \in (0,1)$. Since $\cosh(z) - 1 \geq \frac{1}{4} e^z$ for $z\geq z_0:=\ln4$, we have $\big(\cosh(r\sqrt{\lambda}) - 1\big)^2 \geq \frac{1}{16} e^{2 r\sqrt{\lambda}}$ for $\lambda \geq \lambda_0 = (z_0/r)^2$. The function $\lambda\mapsto\varphi_t(\lambda) = 2 r\sqrt{\lambda} - 2 t\lambda$ attains its maximum $r^2/(2t)$ at $\lambda_\star = r^2/(4 t^2)$, and $\varphi_t''(\lambda) = -\frac{1}{2}\, r\, \lambda^{-3/2}$. By \eqref{gtilde-norm},
\begin{equation*}
\int_X \HSnorm{\widetilde{g}_{r,t,x}(y)}^2\, d\mu(y) \geq \frac{1}{16}\int_{\lambda \geq \lambda_0} e^{\varphi_t(\lambda)}\, d\nu_x(\lambda).
\end{equation*}
By the local Weyl law \eqref{loc-Weyl}, one may choose $s \in (0,1)$ (depending only on $\eta$, to be adjusted below) and, for a.e.\ $x$, a threshold $\Lambda_{\eta,x} > 0$ such that, for all $\Lambda \geq \Lambda_{\eta,x}$,
\begin{equation}\label{weyl-window}
E_x\big((1 + s)\Lambda\big) - E_x\big((1 - s)\Lambda\big) \geq K_\eta\, c(x)\, \Lambda^{\alpha_2} \chi_2(\Lambda).
\end{equation}
Indeed, by \eqref{loc-Weyl} and the slow variation of $\chi_2$, the left-hand side is equivalent, as $\Lambda\to+\infty$, to $c(x)\big((1+s)^{\alpha_2}-(1-s)^{\alpha_2}\big)\Lambda^{\alpha_2}\chi_2(\Lambda)$, so that one may take $K_\eta=\tfrac{1}{2}\big((1+s)^{\alpha_2}-(1-s)^{\alpha_2}\big)$, which depends only on $\eta$ through $s$.
Take $\tau_{\eta,r,x}>0$ small enough that $t<\tau_{\eta,r,x}$ implies $\lambda_\star \geq \max(\lambda_0/(1-s), \Lambda_{\eta,x})$, and restrict the integral to $I = [(1 - s)\lambda_\star,\, (1 + s)\lambda_\star]$. A second-order expansion of $\varphi_t$ at $\lambda_\star$ gives $\varphi_t(\lambda) \geq (1 - C s^2)\, r^2/(2t)$ on $I$, for some constant $C$. Shrinking $s$ if necessary (depending only on $\eta$), this reads $\varphi_t(\lambda) \geq (1 - \eta/2)\, r^2/(2t)$ on $I$. Combining with \eqref{weyl-window} at $\Lambda = \lambda_\star$,
\begin{equation*}
\int_X \HSnorm{\widetilde{g}_{r,t,x}(y)}^2\, d\mu(y) \geq \frac{K_\eta}{16}\, c(x)\, \lambda_\star^{\alpha_2} \chi_2(\lambda_\star)\, e^{(1 - \eta/2)\, r^2/(2t)}.
\end{equation*}
For fixed $r$ and $t \to 0$, the prefactor $\lambda_\star^{\alpha_2} \chi_2(\lambda_\star) = \big(r^2/(4 t^2)\big)^{\alpha_2} \chi_2\big(r^2/(4 t^2)\big)$ is subexponential in $1/t$, hence bounded below by $e^{-(\eta/2)\, r^2/(2t)}$ after shrinking $\tau_{\eta,r,x}$; this yields \eqref{lowerA} with $c_\eta = K_\eta/16$.
\end{proof}

\begin{lem}[Upper bound on $\omega$, small time]\label{resummed-omega}
Let $m \in \N^*$ with $m>\alpha_1$. For a.e. $x \in X$ with $\delta := d_{\mathrm{ess}}(x,\omega) > 0$, and for every $r \in (0,\delta)$, there exists $C_x > 0$ (depending also on $r$ and $m$) such that, for all $t \in (0, \frac{\delta^2}{4(m+1)}  )$,
\begin{equation}\label{upperB}
\int_\omega \HSnorm{\widetilde{g}_{r,t,x}(y)}^2\, d\mu(y) \leq C_x\, \big(1 + t^{-2(2m + 1)}\big)\, e^{-(\delta^2 - r^2)/(2t)}.
\end{equation}
\end{lem}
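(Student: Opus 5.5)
The plan is to start from the transmutation representation \eqref{gtilde-wave}, $\widetilde g_{r,t,x}(y)=\langle w(y,x),\Phi_{r,t}-\gamma_t\rangle$. By finite speed of propagation (Remark~\ref{rem:finite-speed}), for $\mu$-a.e.\ $y\in\omega$ we have $d(x,y)\geq\delta$, so $w(y,x)$ is supported in $\{|s|\geq\delta\}$. Fix $\delta'\in(r,\delta)$ close to $\delta$ and a smooth even cutoff $\theta$ with $\theta=0$ on $[-\delta',\delta']$ and $\theta=1$ on $\{|s|\geq\delta\}$. Then $\widetilde g_{r,t,x}(y)=\langle w(y,x),\theta(\Phi_{r,t}-\gamma_t)\rangle$ for a.e.\ $y\in\omega$. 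On the support of $\theta$ we have $|s|\geq\delta'>r$, so both $|\Phi_{r,t}(s)|\leq(4\pi t)^{-1/2}e^{(r^2-s^2)/(4t)}$ and $\gamma_t(s)$ are bounded by $e^{-(\delta'^2-r^2)/(4t)}$ times Gaussian tails. The cutoff costs a little exponent, but $\delta'$ can in fact be taken equal to $\delta$ after a limiting argument. Alternatively, I would keep $\delta$ exactly by putting the transition region of $\theta$ inside $(r,\delta)$ and noting that only $|s|\geq\delta$ matters for the support pairing; the bounds are then computed on $|s|\geq\delta$ for the terms that carry the weight.

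To turn the pairing into an $L^2(\omega)$ bound, I would use the weak wave equation $A\langle w,\psi\rangle=-\langle w,\psi''\rangle$ together with ultracontractivity. Concretely, I would write $\psi=\theta(\Phi_{r,t}-\gamma_t)$ and express $\langle w(\cdot,x),\psi\rangle$ as the column of the kernel of $\mathcal{C}_\psi(A)$. Since $\mathcal{C}_\psi$ is the cosine transform, $(1+\lambda)^m\mathcal{C}_\psi(\lambda)=\mathcal{C}_{(1-\partial_s^2)^m\psi}(\lambda)$, and so $|\mathcal{C}_\psi(\lambda)|\leq(1+\lambda)^{-m}\|(1-\partial_s^2)^m\psi\|_{L^1(\R)}$. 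The choice $m>\alpha_1$ then lets Proposition~\ref{prop:kernel-weyl} and the pointwise Plancherel identity \eqref{plancherel} give
\[
\int_X\HSnorm{K_{\mathcal{C}_\psi(A)}(y,x)}^2\,d\mu(y)=\int|\mathcal{C}_\psi|^2\,d\nu_x\leq\|(1-\partial_s^2)^m\psi\|_{L^1}^2\int(1+\lambda)^{-2m}\,d\nu_x(\lambda),
\]
and the last integral is finite for a.e.\ $x$ by \ref{item:A1}. This constant is $C_x$. The restriction to $\omega$ is handled by the support argument: on $\omega$, $\widetilde g_{r,t,x}$ coincides with the column for $\psi$, so $\int_\omega\HSnorm{\widetilde g}^2\leq\int_X\HSnorm{K_{\mathcal{C}_\psi(A)}(\cdot,x)}^2$.

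It remains to estimate $\|(1-\partial_s^2)^m\psi\|_{L^1}$. Each derivative of $\Phi_{r,t}$ brings down a factor of order $t^{-1}(1+|s|+r)$, so $2m$ derivatives give polynomial factors $t^{-2m}(1+|s|)^{2m}$. Together with $(4\pi t)^{-1/2}$ and the integration of the Gaussian tail $\int_{|s|\geq\delta}(1+|s|)^{2m}e^{-s^2/(4t)}\,ds$, this gives a bound of the form $(1+t^{-(2m+1)})e^{-(\delta^2-r^2)/(4t)}$, and squaring yields the claimed bound \eqref{upperB}. The condition $t<\delta^2/(4(m+1))$ enters when bounding $(1+|s|)^{2m}e^{-(s^2-\delta^2)/(4t)}$ on $|s|\geq\delta$ uniformly: it makes this function decreasing there, so its value is controlled by its value at $s=\delta$.

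The main obstacle I expect is the cutoff region $\delta'\leq|s|\leq\delta$, where $\theta'\neq0$ and the exponent would only be $\delta'^2$. I would first try to avoid the cutoff altogether. Because the support of $w(y,x)$ lies in $\{|s|\geq d(x,y)\}$, the pairing only sees $\psi$ on that set; approximating $1_{\{|s|\geq\delta\}}$ by cutoffs $\theta_n$ with shrinking transition layers $[\delta-1/n,\delta]$ should give constants uniform in $n$, with the derivative blow-up of $\theta_n$ multiplying only exponentially small values near $s=\delta$. Failing that, I would accept $\delta'$ arbitrarily close to $\delta$. This weaker bound still suffices for the main theorem, since $\kappa<1/2$ is arbitrary.
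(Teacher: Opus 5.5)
Your architecture is sound but differs from the paper's. You use one $y$-independent multiplier $\mathcal{C}_{\theta\psi_t}$ (with $\psi_t=\Phi_{r,t}-\gamma_t$), identify it with $\widetilde g_{r,t,x}$ on $\omega$ through the support of $w(y,x)$, enlarge $\int_\omega$ to $\int_X$, and conclude with the pointwise Plancherel identity and $|\mathcal{C}_\psi(\lambda)|\le(1+\lambda)^{-m}\|(1-\partial_s^2)^m\psi\|_{L^1}$. The paper argues pointwise in $y$ instead. It localizes \eqref{Cpsi_bound_by_pk} to $\{|s|\ge d(x,y)\}$ via Lemma~\ref{lem:localize_weighted_sup_norms}, whose Seeley extension ensures that only values of $\psi_t$ on $\{|s|\ge d(x,y)\}$ enter. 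It then integrates the resulting Gaussian bound in $d(x,y)$ over $\omega$ with Lemma~\ref{lem:gauss-doubling-int}.

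\textbf{The gap is the exponent.} Because you control the column on all of $X$, your bound involves $\|(1-\partial_s^2)^m(\theta\psi_t)\|_{L^1(\R)}$. This norm sees $\psi_t$ on the transition layer $\delta'\le|s|\le\delta$, where it is of size up to $(4\pi t)^{-1/2}e^{(r^2-\delta'^2)/(4t)}$. So you only obtain $e^{-(\delta'^2-r^2)/(2t)}$. Your remark that only $|s|\ge\delta$ matters for the pairing is correct pointwise for $y\in\omega$. It is lost once you enlarge to $\int_X$, because for $y$ near $x$ the pairing $\langle w(y,x),\theta\psi_t\rangle$ does involve the layer.

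The limiting argument you propose does not close this. Take a cutoff $\theta_n$ with layer $[\delta-1/n,\delta]$. The Leibniz term $\theta_n^{(2m)}\psi_t$ alone has $L^1$ norm of order $n^{2m-1}|\psi_t(\delta)|$, and $\psi_t(\pm\delta)\neq0$ for generic $t$. So at fixed $t$ your constants blow up as $n\to\infty$; exponential smallness in $1/t$ does not compensate growth in $n$. The fallback with a fixed $\delta'<\delta$ proves a strictly weaker lemma. That weaker lemma would suffice for Theorem~\ref{thm:main_thm}, where this lemma only serves to show $\sup_sR_s<+\infty$, but it is not the statement.

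The missing idea is to tie the layer width to $t$. Take $\theta=0$ on $|s|\le\delta-\epsilon$, $\theta=1$ on $|s|\ge\delta$, $|\theta^{(j)}|\lesssim\epsilon^{-j}$, with $\epsilon=\min(t,\delta/2)$. On the layer, $e^{(r^2-s^2)/(4t)}\le e^{\delta\epsilon/(2t)}e^{(r^2-\delta^2)/(4t)}\le e^{\delta/2}e^{(r^2-\delta^2)/(4t)}$. The derivatives of $\theta$, integrated over layers of length $\epsilon$, cost only powers of $1/t$. This yields $e^{-(\delta^2-r^2)/(2t)}$ with a polynomial loss that fits within $1+t^{-2(2m+1)}$.

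Equivalently, you may replace $\theta\psi_t$ by a Seeley extension of $\psi_t|_{\{|s|\ge\delta\}}$, as in the proof of Lemma~\ref{lem:localize_weighted_sup_norms}. With either repair your argument is a valid and slightly more economical alternative, since it needs neither the $y$-dependent localization nor the doubling integral.
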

\begin{proof}
Set $\psi_t = \Phi_{r,t} - \gamma_t \in \mathcal{S}(\R)$. By \eqref{Cpsi_bound_by_pk}, for every $\psi \in \W$ one has $\HSnorm{\scalp{w(x,y)}{\psi}} \leq C_m\big(\Vert \psi \Vert_{L^1(\R)} + \Vert \psi^{(2m)} \Vert_{L^1(\R)}\big)$. Since $\langle\,\cdot\,\rangle^{-2}\in L^1(\R)$, each $L^1$ norm is controlled by the corresponding weighted $L^\infty$ norm; hence, after modifying $C_m$,
\begin{equation*}
\HSnorm{\scalp{w(x,y)}{\psi}} \leq C_m\Big(\Vert \langle\,\cdot\,\rangle^2 \psi \Vert_{L^\infty(\R)} + \Vert \langle\,\cdot\,\rangle^2 \psi^{(2m)} \Vert_{L^\infty(\R)}\Big).
\end{equation*}
By Remark~\ref{rem:finite-speed}, $w(x,y)$ is supported in $\{|s|\geq d(x,y)\}$ for $\mu\otimes\mu$-a.e.\ $(x,y)$. Lemma~\ref{lem:localize_weighted_sup_norms} therefore localizes this estimate, for such pairs and with $\rho=d(x,y)$, to
\begin{equation}\label{localizedB}
\HSnorm{\scalp{w(y,x)}{\psi}} \leq C_m\, (1 + \rho^{-2m})\Big( \big\Vert \langle\,\cdot\,\rangle^2 \psi \big\Vert_{L^\infty(\vert s \vert \geq \rho)} + \big\Vert \langle\,\cdot\,\rangle^2 \psi^{(2m)} \big\Vert_{L^\infty(\vert s \vert \geq \rho)} \Big).
\end{equation}
Let us estimate the right-hand side for $\psi = \psi_t$ and $\rho \geq \delta$. Both $\Phi_{r,t}$ and $\gamma_t$ are of the form $(4\pi t)^{-1/2} e^{(a - s^2)/(4t)}\cos(b s)$ with $(a,b) = (r^2,\, r/(2t))$ or $(0,0)$. Hence, for every $N \in \N$, there is a polynomial $Q_N$ with nonnegative coefficients (after bounding by their absolute values the coefficients produced by differentiation) such that
\begin{equation*}
\big\vert \partial_s^N \psi_t(s) \big\vert \leq \frac{1}{\sqrt{4\pi t}}\, e^{(r^2 - s^2)/(4t)}\, Q_N\big(\vert s \vert,\, 1/t\big),
\end{equation*}
the term carrying $e^{r^2/(4t)}$ being the larger one.
For $s\neq 0$, set $P_N(s):=\langle s\rangle^{2}Q_N(|s|,1/t)$ and $g_N(s):=P_N(s)\,e^{-s^{2}/(4t)}$ for $s\ge 0$, so that $\langle s\rangle^{2}\lvert\partial_s^{N}\psi_t(s)\rvert\le (4\pi t)^{-1/2}e^{r^{2}/(4t)}\,g_N(|s|)$. Since $Q_N$ has nonnegative coefficients, so does $P_N=\sum_{k=0}^{D}a_k s^{k}$, with $D:=\deg_s P_N\le N+2$; hence, for $s>0$, $P_N'(s)/P_N(s)\le D/s$, and $(\log g_N)'(s)\le D/s-s/(2t)<0$ whenever $s^{2}>2tD$. In particular, if $t<\delta^2/(4(m+1))\leq\delta^{2}/(2D)$ for $N\in\{0,2m\}$, then $g_N$ is nonincreasing on $[\delta,+\infty)$, and since $\rho\ge\delta$,
\[
\bigl\lVert\langle\cdot\rangle^{2}\partial_s^{N}\psi_t\bigr\rVert_{L^{\infty}(|s|\ge\rho)}
\le (4\pi t)^{-1/2}\,e^{r^{2}/(4t)}\,g_N(\rho).
\]
Moreover, $Q_N(\rho,1/t)\leq C_N(1+\rho)^{N}(1+t^{-N})$, so that $g_N(\rho)\leq C_N(1+\rho)^{N+2}(1+t^{-N})\,e^{-\rho^2/(4t)}$, and therefore
\[
  \bigl\lVert\langle\cdot\rangle^{2}\partial_s^{N}\psi_t\bigr\rVert_{L^{\infty}(|s|\ge\rho)}
  \le C_N\bigl(1+t^{-N-1}\bigr)\,(1+\rho)^{N+2}\,e^{(r^{2}-\rho^{2})/(4t)},
  \qquad N\in\{0,2m\}.
\]
Plugging this into \eqref{localizedB} with $\rho = d(x,y) \geq \delta$ for $\mu$-a.e.\ $y \in \omega$ (by definition of $\delta = d_{\mathrm{ess}}(x,\omega)$), using $1+\rho^{-2m}\leq 1+\delta^{-2m}$ and \eqref{gtilde-wave}, we get, for $\mu$-a.e.\ $y\in\omega$,
\begin{equation*}
\HSnorm{\widetilde{g}_{r,t,x}(y)} \leq C_{m,\delta}\, \big(1 + t^{-2m - 1}\big)\,\big(1+d(x,y)\big)^{2m+2}\, e^{(r^2 - d(x,y)^2)/(4t)}.
\end{equation*}
We now square and integrate over $\omega$. Writing $e^{(r^2-\rho^2)/(2t)}=e^{-(\delta^2-r^2)/(2t)}\,e^{-(\rho^2-\delta^2)/(2t)}$ with $\rho=d(x,y)$, and using that, for $\rho\geq\delta$ and $0<t\leq t_1:=\delta^2/(4(m+1))$, one has $e^{-(\rho^2-\delta^2)/(2t)}\leq e^{-(\rho^2-\delta^2)/(2t_1)}$, we obtain
\begin{multline*}
\int_\omega \HSnorm{\widetilde{g}_{r,t,x}(y)}^2\,d\mu(y) \\
\leq 2C_{m,\delta}^2\,\big(1+t^{-2(2m+1)}\big)\,e^{-(\delta^2-r^2)/(2t)}\;e^{\delta^2/(2t_1)}\!\int_X \big(1+d(x,y)\big)^{4m+4} e^{-d(x,y)^2/(2t_1)}\,d\mu(y),
\end{multline*}
and the last integral is finite by Lemma~\ref{lem:gauss-doubling-int}. This is \eqref{upperB}.
\end{proof}

\begin{lem}[Upper bound on $\omega$, large time]\label{resummed-large-t}
For a.e. $x \in X$ and every $r > 0$, there exists $C_r>0$, independent of $x$, such that, for all $t \geq \max(1, r)$,
\begin{equation}\label{largeB}
\int_\omega \HSnorm{\widetilde{g}_{r,t,x}(y)}^2\, d\mu(y) \leq \frac{C_r}{t^2}.
\end{equation}
\end{lem}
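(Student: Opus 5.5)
The plan is to discard the restriction to $\omega$ altogether and bound the full $L^2(X;\V)$ norm of $\widetilde g_{r,t,x}$ through the pointwise Plancherel identity \eqref{gtilde-norm}:
\[
\int_\omega \HSnorm{\widetilde g_{r,t,x}(y)}^2\,d\mu(y)\leq\int_0^{+\infty}\bigl(\cosh(r\sqrt\lambda)-1\bigr)^2e^{-2t\lambda}\,d\nu_x(\lambda).
\]
For large $t$ no geometry is needed. All the decay comes from the heat factor $e^{-2t\lambda}$ together with the vanishing of $\cosh(r\sqrt\lambda)-1$ at $\lambda=0$. This vanishing is exactly what supplies the polynomial rate $t^{-2}$ in \eqref{largeB}.

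\textbf{Step 1: an $x$-uniform bound on $\nu_x$.} By \eqref{plancherel} applied to $\phi(\lambda)=e^{-\lambda/2}$, together with the semigroup identity on the diagonal, we have $\int_0^{+\infty}e^{-\lambda}\,d\nu_x(\lambda)=\tr_{\V_x}p_1(x,x)$ for a.e.\ $x$. By \ref{item:A1} and the comparison \eqref{diag_trace_vs_L2} between the diagonal trace and the $2\to\infty$ norm, this is bounded by $r_\V\,\|e^{-A/2}\|_{2\to\infty}^2\leq M$, where $M$ is independent of $x$. If the weight $e^{-\lambda}$ turns out to be inconvenient, any fixed $e^{-s\lambda}$ with $s\in(0,1]$ would serve equally well. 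This step is where uniformity in $x$ is obtained, and it is the only point needing care: one must make sure the diagonal trace bound holds for a.e.\ $x$ with a single constant, which is what the $\Delta$-a.e.\ framework of Proposition~\ref{prop:heat-kernel} provides.

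\textbf{Step 2: a pointwise bound on the multiplier.} We use the elementary inequality $0\leq\cosh z-1\leq\frac{z^2}{2}\cosh z\leq\frac{z^2}{2}e^{z}$. With $z=r\sqrt\lambda$ it gives
\[
\bigl(\cosh(r\sqrt\lambda)-1\bigr)^2e^{-2t\lambda}\leq\frac{r^4}{4}\,\lambda^2e^{2r\sqrt\lambda}e^{-2t\lambda}.
\]
We then split $e^{-2t\lambda}=e^{-\lambda}e^{-(2t-1)\lambda}$. By Young's inequality, $2r\sqrt\lambda\leq\frac{t\lambda}{2}+\frac{2r^2}{t}$, and since $t\geq r$ we get $\frac{2r^2}{t}\leq 2r$. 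Moreover $t\geq1$ gives $2t-1-\frac t2\geq\frac t2$. Combining these,
\[
\lambda^2e^{2r\sqrt\lambda}e^{-(2t-1)\lambda}\leq e^{2r}\lambda^2e^{-t\lambda/2}\leq e^{2r}\Bigl(\frac{4}{e\,t}\Bigr)^2.
\]

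\textbf{Step 3: conclusion.} Integrating the bound of Step 2 against $e^{-\lambda}\,d\nu_x$ and using Step 1 yields \eqref{largeB} with
\[
C_r=\frac{4r^4e^{2r}M}{e^2},
\]
which is independent of $x$.

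I do not expect a genuine obstacle beyond the uniform diagonal bound of Step 1. The remaining steps are elementary calculus, and restricting to $\omega$ costs nothing because the integrand is nonnegative.
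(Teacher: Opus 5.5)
Your argument is correct. It differs from the paper's only in how the final spectral integral is controlled.

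\textbf{Common part.} Both proofs discard $\omega$ via \eqref{gtilde-norm} and use $(\cosh(r\sqrt\lambda)-1)^2\leq\frac{r^4\lambda^2}{4}e^{2r\sqrt\lambda}$.

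\textbf{The paper's route.} It absorbs $e^{2r\sqrt\lambda}$ into half of the heat factor, using $2r\sqrt\lambda-t\lambda\leq r^2/t\leq r$. This reduces the problem to $\int_0^{+\infty}\lambda^2e^{-t\lambda}\,d\nu_x(\lambda)$. The paper bounds this by $C/t^2$ through Stieltjes integration by parts, the rescaling $u=t\lambda$, monotonicity of $E_x$ (this is where $t\geq1$ enters), and the uniform polynomial bound \eqref{unif-bound-E}.

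\textbf{Your route.} You set aside a fixed factor $e^{-\lambda}$, which is where you use $t\geq1$. You then bound the remaining multiplier by its supremum over $\lambda$, which is $O_r(t^{-2})$. After that you only need the single finite mass $\int_0^{+\infty}e^{-\lambda}\,d\nu_x(\lambda)=\tr_{\V_x}(p_1(x,x))$, which is bounded uniformly in $x$ by \eqref{diag_trace_vs_L2} and \ref{item:A1} at time $1/2$.

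\textbf{Comparison.} Your version is more elementary. It uses no information on the growth of $E_x$ and no integration by parts. It also isolates the double zero of $\cosh(r\sqrt\lambda)-1$ at $\lambda=0$ as the sole source of the rate $t^{-2}$. The paper's route has the advantage of simply recycling \eqref{unif-bound-E}, which is already proved in Proposition~\ref{prop:kernel-weyl}.

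\textbf{A small bookkeeping point.} \eqref{plancherel} is stated only for compactly supported $\phi$. For $\phi(\lambda)=e^{-\lambda/2}$ you should therefore cite its extension \eqref{L2rowHS} together with Remark~\ref{rem:consistency_kernel}. Alternatively, use directly the inequality $\int_{[0,\lambda]}e^{-t\xi}\,d\nu_x(\xi)\leq\tr_{\V_x}(p_t(x,x))$ from the proof of Proposition~\ref{prop:kernel-weyl} and let $\lambda\to+\infty$; this already gives the one-sided bound you need.
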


\begin{proof}
Restricting the spatial integral to $\omega$ only decreases the left-hand side of \eqref{gtilde-norm}. Set
\begin{equation*}
I_t(x):=\int_0^{+\infty}\big(\cosh(r\sqrt\lambda)-1\big)^2e^{-2t\lambda}\,d\nu_x(\lambda).
\end{equation*}
For $z\geq0$, one has $\cosh z-1\leq \frac{1}{2}z^2\cosh z$, hence $(\cosh(r\sqrt\lambda)-1)^2
\leq \frac{r^4\lambda^2}{4}e^{2r\sqrt\lambda}$.
If $t\geq r$, then $2r\sqrt\lambda-t\lambda\leq r^2/t\leq r$. Therefore
\begin{equation*}
I_t(x)\leq \frac{r^4e^r}{4} \int_0^{+\infty}\lambda^2e^{-t\lambda}\,d\nu_x(\lambda).
\end{equation*}
Stieltjes integration by parts, followed by the change of variables $u=t\lambda$, gives
\begin{multline*}
\int_0^{+\infty}\lambda^2e^{-t\lambda}\,d\nu_x(\lambda)
=-\int_0^{+\infty}E_x(\lambda)(2\lambda-t\lambda^2)e^{-t\lambda}\,d\lambda\\
\leq \frac1{t^2}\int_0^{+\infty}E_x(u/t)(2u+u^2)e^{-u}\,du
\leq \frac1{t^2}\int_0^{+\infty}E_x(u)(2u+u^2)e^{-u}\,du
\leq \frac{C}{t^2}.
\end{multline*}
Here we used $t\geq1$, the monotonicity of $E_x$, and the uniform polynomial estimate \eqref{unif-bound-E}; in particular, $C$ is independent of $x$. This proves \eqref{largeB}, with a constant depending only on $r$ and the constants in~\ref{item:A1}.
\end{proof}

\subsection{Proof of Theorem~\ref{thm:main_thm}}\label{subsec:proof_main}

Fix $\eta \in (0,1)$. For a.e. $x \in X$ with $\delta := d_{\mathrm{ess}}(x,\omega) > 0$, fix $r \in (0,\delta)$.
For $s\in(0,1]$ and an orthonormal basis $(e_\ell)_{\ell=1}^{r_\V}$ of $\V_x$, consider the sections
\[
f_{s,\ell} := \widetilde g_{r,s,x}\,e_\ell = K_{(\cosh(r\sqrt A)-1)e^{-sA}}(\cdot,x)\,e_\ell.
\]

They belong to $L^2(X;\V)$ by Proposition~\ref{prop:kernel-weyl} and to $(\ker A)^\perp$, since their multiplier vanishes at $\lambda=0$. Moreover, Corollary~\ref{cor:semigroup-column} gives $e^{-tA}f_{s,\ell}=\widetilde g_{r,t+s,x}e_\ell$ for every $t>0$. For each fixed $s$, this identity holds for a.e.\ $x$. By first imposing it for $s\in\mathbb Q\cap(0,1]$ and then using $L^2$-continuity in $s$, we may choose one base point $x$ for which it holds simultaneously for all $s\in(0,1]$; no uncountable union of exceptional sets is involved.

For $s\in(0,1]$, set
\[
R_s:=\int_s^{T+s}\int_\omega\HSnorm{\widetilde g_{r,u,x}(y)}^2\,d\mu(y)\,du.
\]
We first prove that $M:=\sup_{s\in(0,1]}R_s<+\infty$. Let $m$ be as in Lemma~\ref{resummed-omega} and set
$a:=\frac{1}{2}\min\big(1,\tfrac{\delta^2}{4(m+1)}\big)$ and $b:=\max(1,r)$.
Then $0<a<b$. On $(0,a]$, Lemma~\ref{resummed-omega} supplies an integrable majorant at the origin. On $[a,b]$, the observed norm is bounded by the total norm $\int_X\HSnorm{\widetilde g_{r,u,x}}^2d\mu$, which is finite and nonincreasing in $u$ by \eqref{gtilde-norm}, hence bounded by its value at $a$. On $[b,+\infty)$, Lemma~\ref{resummed-large-t} supplies the integrable majorant $C_ru^{-2}$. Thus the integrand defining $R_s$ is bounded by a fixed integrable function on $(0,+\infty)$, uniformly in $s\in(0,1]$; for $T<+\infty$, only integrability on $(0,T+1]$ is needed.

In particular, the observed integral is finite, including when $T=+\infty$, so Definition~\ref{defn:h-obs} permits us to apply \eqref{gen-obs} to every $f_{s,\ell}$. Summing over $\ell$, which reconstructs the Hilbert--Schmidt norms, and changing variables $u=t+s$, we obtain
\begin{equation}\label{obs-resummed}
\int_s^{T+s} h(u-s) \int_X \HSnorm{\widetilde{g}_{r,u,x}(y)}^2\, d\mu(y)\, du \leq R_s\leq M.
\end{equation}

We now derive the pointwise estimate on $h$ directly, without taking a limit as $s\to0$. Fix $\varepsilon>0$, set $s=\tau^2$, and restrict the left-hand side of \eqref{obs-resummed} to $I_\tau=[\tau+s,(1+\varepsilon)\tau+s]$.
Choose $\tau>0$ so small that $(1+\varepsilon)\tau+\tau^2<\tau_{\eta,r,x}$, $(1+\varepsilon)\tau<T$ and $(1+\varepsilon)\tau\leq\tau_h$, with the middle condition omitted when $T=+\infty$. Then $I_\tau\subset[s,T+s]$, Lemma~\ref{resummed-lower} applies throughout $I_\tau$, and $h(u-s)\geq h(\tau)$ there. Hence \eqref{obs-resummed} gives
\begin{equation*}
M\geq\int_{I_\tau}h(u-s)\int_X\HSnorm{\widetilde g_{r,u,x}(y)}^2d\mu(y)du
\geq \varepsilon\tau\,c_\eta c(x)h(\tau)
\exp\Big(\frac{(1-\eta)r^2}{2((1+\varepsilon)\tau+\tau^2)}\Big).
\end{equation*}
Therefore
\[
h(\tau)\leq \frac{M}{\varepsilon c_\eta c(x)}\,\tau^{-1}
\exp\Big(-\frac{(1-\eta)r^2}{2(1+\varepsilon+\tau)\tau}\Big).
\]
Let $0<\kappa_0<(1-\eta)/(2(1+\varepsilon))$. For all sufficiently small $\tau$, the coefficient $(1-\eta)/(2(1+\varepsilon+\tau))$ exceeds $\kappa_0$ by a fixed positive amount; the factor $\tau^{-1}$ is therefore absorbed by the resulting exponential gap. Thus there exists $A_{\eta,\varepsilon,\kappa_0,r,x,T}>0$ such that
\[
h(\tau)\leq A_{\eta,\varepsilon,\kappa_0,r,x,T}e^{-\kappa_0r^2/\tau}
\]
for all sufficiently small $\tau>0$. Since $\eta$ and $\varepsilon$ are arbitrary, this conclusion holds for every $0<\kappa_0<1/2$.

It remains to optimize the geometry. Let $L:=\mathcal{L}(\omega)$. If $L=0$, the conclusion follows from the boundedness of $h$. Assume $0<L<+\infty$ and fix $0<\kappa<1/2$. Choose successively $\delta_0<L$, $0<\kappa_0<1/2$ and $r<\delta_0$, sufficiently close to $L$, $1/2$ and $\delta_0$, respectively, that $\kappa_0r^2>\kappa L^2$. The set $U_{\delta_0}=\{x\in X\mid d_{\mathrm{ess}}(x,\omega)>\delta_0\}$ is open and nonempty, hence has positive measure because $\supp\mu=X$. We may therefore choose $x\in U_{\delta_0}$ outside all exceptional null sets. The preceding estimate yields $h(t)\leq Ae^{-\kappa_0r^2/t}\leq Ae^{-\kappa L^2/t}$ for small $t$. If $L=+\infty$, the same argument, with any prescribed $R>0$ and with $\delta_0$ chosen sufficiently large, gives $h(t)\leq A_{T,R,\kappa}e^{-\kappa R^2/t}$ for small $t$. In both cases, the boundedness of $h$ extends the estimate to every $t\in(0,T)$ after enlarging the constant. This treats finite and infinite horizons simultaneously.
\qed

\subsection{Proof of Corollary~\ref{cor:conj_ervzua}}

Apply the $(\ker A)^\perp$ variant of Theorem~\ref{thm:main_thm} with $h(t)=C_\infty^{-1}e^{-\tilde\gamma_\infty/t}$ and $T=+\infty$. For every $0<\kappa<1/2$ and small $t$,
$C_\infty^{-1}e^{-\tilde\gamma_\infty/t}\leq A_{\infty,\kappa}e^{-\kappa\mathcal{L}(\omega)^2/t}$, hence $\tilde\gamma_\infty\geq\kappa\mathcal{L}(\omega)^2$. Letting $\kappa\uparrow1/2$ concludes. If $\mathcal{L}(\omega)=+\infty$, the finite-radius version of Theorem~\ref{thm:main_thm} yields in the same way $\tilde\gamma_\infty\geq\kappa R^2$ for every $R>0$, which is impossible for a finite $\tilde\gamma_\infty$. \qed

\subsection{Proof of Corollary~\ref{cor:fast-controls}}

Let $\widetilde K>K$. Multiplying \eqref{fast-obs} by $e^{-\widetilde K/T}$, using $\int_0^T\leq\int_0^{T_0}$ on the right-hand side, and integrating in $T\in(0,T_0]$, we obtain, for every $f\in L^2(X;\V)$,
\begin{multline*}
\int_0^{T_0} e^{-\widetilde K/T}\,\|e^{-TA}f\|_{L^2(X;\V)}^2\,dT
\leq C\Big(\int_0^{T_0}e^{-(\widetilde K-K)/T}\,dT\Big)\int_0^{T_0}\|e^{-tA}f\|_{L^2(\omega;\V)}^2\,dt \\
\leq C\,T_0\int_0^{T_0}\|e^{-tA}f\|_{L^2(\omega;\V)}^2\,dt.
\end{multline*}
Thus \eqref{gen-obs} holds with horizon $T_0$ and weight $h(t)=(CT_0)^{-1}e^{-\widetilde K/t}$. Theorem~\ref{thm:main_thm} gives, for every $0<\kappa<1/2$, $e^{-\widetilde K/t}\leq CT_0A_{T_0,\kappa}e^{-\kappa\mathcal{L}(\omega)^2/t}$ for small $t$. Hence $\widetilde K\geq\kappa\mathcal{L}(\omega)^2$; first letting $\kappa\uparrow1/2$ and then $\widetilde K\downarrow K$ proves the claim. \qed

\subsection{Proof of Corollary~\ref{cor:obs_sharp_cor}}

We prove the case $\mathcal{L}(\omega)<+\infty$; the infinite-distance statement follows from the finite-radius version of Theorem~\ref{thm:main_thm}. If $\mathcal{L}(\omega)=0$, the conclusion follows from the boundedness of $H_T$, so assume $\mathcal{L}(\omega)>0$. Let $0<\kappa<1/2$. Then
\[
H_T(\lambda)^2\leq A_{T,\kappa}\int_0^T e^{-\kappa\mathcal{L}(\omega)^2/t-2\lambda t}\,dt.
\]
By Laplace's method \cite[Chapter~4]{deBruijn1958}, there exists $C_{T,\kappa}>0$ such that, for $\lambda\geq1$,
\[
H_T(\lambda)^2\leq C_{T,\kappa}\lambda^{-3/4}e^{-2\sqrt{2\kappa}\,\mathcal{L}(\omega)\sqrt\lambda}.
\]
Given $0<\beta<1$, choose $\kappa\in(\beta^2/2,1/2)$. Then $\sqrt{2\kappa}>\beta$, and the polynomial factor is absorbed by the exponential gap. Hence \eqref{H-sharp} holds for $\lambda\geq1$; boundedness of $H_T$ extends it to $[0,1]$. \qed

\section{Applications}\label{sec:applications}
%%%%%%%%%%%%%%%%%%%%%%%%%%%%%%%%%%%%%

In this section we verify Assumptions~\ref{item:A1}--\ref{item:A3} in four geometric settings and draw the consequences of Theorem~\ref{thm:main_thm}.

The following table summarizes the verification of~\ref{item:A1}--\ref{item:A3} in the settings treated in this section. For each example we record the ultracontractivity rate $\alpha_1$ of Assumption~\ref{item:A1}, the pointwise Weyl exponent $\alpha_2$ and slowly varying correction $\chi_2$ of~\ref{item:A3}, the principal constant $c(x)$, the geometric scale $\mathcal{L}(\omega)$, and the key references. In all examples but the sub-Riemannian one (for concreteness we present the Grushin case), the ultracontractivity exponent $\alpha_1$ of~\ref{item:A1} coincides with $\alpha_2$; the sub-Riemannian case, where $\alpha_1>\alpha_2$, is discussed separately below.
For the compact geometric examples and an open observation set, the table displays the ordinary maximal distance; by \eqref{ess-dist}, it agrees with $\mathcal{L}(\omega)$. For a merely measurable observation set, $\mathcal{L}(\omega)$ always retains its essential meaning. The relevant metric is indicated explicitly in each row.

\medskip
\begin{center}
\renewcommand{\arraystretch}{1.4}
\begin{tabular}{lcccccl}
\toprule
\textbf{Setting} & $\alpha_1$ & $\alpha_2$ & $\chi_2$ & $c(x)$ & $\mathcal{L}(\omega)$ & \textbf{References} \\
\midrule
Compact Riem., $\nabla^*\nabla+V$ & $d/2$ & $d/2$ & $1$ & $\frac{r_\V\omega_d}{(2\pi)^d}\frac{d\mathrm{vol}_g}{d\mu}$ & $\displaystyle\sup_{x\in M}d_g(x,\omega)$ & \cite{Grigoryan2009_HeatKernel,sikora2004riesz} \\[3pt]
Schr\"odinger $-\Delta+V$, $\R^d$ & $d/2$ & $d/2$ & $1$ & $\frac{\omega_d}{(2\pi)^d}$ & $ \leq\frac{\sqrt{d}}{2}L$ & \cite{DaviesSimon1984} \\[3pt]
Grushin\ sub-Laplacian & $3/2$ & $1$ & $1$ & $c_{\rm sR}(x)$ & $\displaystyle\sup_{x\in M}d_{\rm sR}(x,\omega)$ & \cite{CdVHT_AHL_2021,SanchezCalle1984FundamentalSolutions} \\[3pt]
$\delta'$-graph, $-\Delta_G+q$ & $1/2$ & $1/2$ & $1$ & $\frac{1}{\pi}$ & $\displaystyle\sup_{x\in G}d_G(x,\omega)$ & \cite{BK24a,OdzakSceta2019WeylQuantumGraphs} \\
\bottomrule
\end{tabular}
\end{center}
\medskip

\noindent Here $\omega_d$ is the volume of the unit ball in $\R^d$, $L$ is the thickness parameter (see \eqref{thick-eq}), and $c_{\rm sR}$ depends on the sub-Riemannian structure (see \cite{CdVHT_AHL_2021,ColinDeVerdiereHillairetTrelat2022arXiv2212_02920} for explicit expressions). It is positive on the regular set and may be unbounded near the singular set, which is why~\ref{item:A3} does not require $c$ to be bounded (Remark~\ref{rem:A3-remarks}).
For the Grushin model, the uniform on-diagonal upper bound has exponent $Q_{\rm up}=3$, whereas the homogeneous dimension on the full-measure regular set is $Q_{\rm reg}=2$; hence $\alpha_1=3/2>\alpha_2=1$. This formulation does not presuppose a stratification of the singular set.

\subsection{Compact Riemannian manifolds and Laplace-type operators}\label{subsec:riem-app}

Let $(M,g,\mu)$ be a smooth connected compact weighted Riemannian manifold of dimension $d$, possibly with boundary $\partial M$, where $g$ is the Riemannian metric and $\mu$ is a smooth positive measure. We endow $M$ with the Riemannian distance. Let $\V\to M$ be a smooth metric vector bundle of rank $r_\V$ with metric connection $\nabla$, and let $V\in\Gamma(\End(\V))$ be smooth and self-adjoint. We assume that the standard realization of the Laplace-type operator $A=\nabla^*_\mu\nabla+V$ is nonnegative; pointwise nonnegativity of $V$ is a sufficient, but not necessary, condition. If $\partial M\neq\emptyset$, we take the Friedrichs realization with Dirichlet boundary conditions. Its closed quadratic form is $\mathcal{E}(u)=\int_M|\nabla u|^2\,d\mu+\int_M\langle Vu,u\rangle_{\V}\,d\mu$, the norm and inner product being the natural fiberwise ones.

The volume doubling property \eqref{framework-VD} holds since $M$ is compact and $\mu$ smooth. The principal symbol $\sigma_A(x,\xi)=|\xi|^2_{g_x}\mathrm{Id}_{\V_x}$ is of Laplace type, hence $A$ is elliptic.

\medskip\noindent\emph{Verification of~\ref{item:A1}.}\par\noindent Ultracontractivity is classical; see \cite{Grigoryan2009_HeatKernel,Ludewig2019StrongShortTime}. The on-diagonal estimate
$\mathrm{Tr}_{\V_x}(p_t(x,x))\leq C t^{-d/2}$
gives $\|e^{-tA}\|_{2\to\infty}\leq C t^{-d/4}$, hence $\alpha_1=d/2$. Here the heat kernel is smooth and, for every fixed $t>0$, has the usual eigenfunction expansion
$p_t(x,y)=\sum_{n\geq0}e^{-t\lambda_n}\varphi_n(x)\otimes\varphi_n(y)^\flat$.

\medskip\noindent\emph{Verification of~\ref{item:A2}.} The wave equation associated with a Laplace-type operator has propagation speed at most one, independently of the zeroth-order term. The equivalence between finite propagation and Davies--Gaffney estimates \cite{sikora2004riesz} therefore gives~\ref{item:A2}.

\medskip\noindent\emph{Verification of~\ref{item:A3}.} The H\"ormander local Weyl law \cite{Hormander1968Acta}, which depends only on the principal symbol and is of local nature, gives, at every interior point $x$ (a set of full measure), the pointwise principal term $E_x(\lambda)\sim c(x)\lambda^{d/2}$ with
$c(x)=\frac{r_\V\,\omega_d}{(2\pi)^d}\,\frac{d\mathrm{vol}_g}{d\mu}(x)>0$ and $\chi_2\equiv1$, so $\alpha_2=d/2=\alpha_1$.

\medskip All assumptions are satisfied, and Theorem~\ref{thm:main_thm} applies.

\paragraph{Laplace--Beltrami operator.} For $A=\Delta_g$ ($V=0$, $\V=M\times\R$) and every nontrivial open observation set $\omega\subset M$, Theorem~\ref{thm:main_thm} gives, for every $0<\kappa<1/2$, the estimate $h(t)\leq A_{T,\kappa}e^{-\kappa\mathcal{L}(\omega)^2/t}$ whenever $h\in\Obs([0,T])$ satisfies \eqref{gen-obs}; equivalently, $\limsup_{t\downarrow0}t\log h(t)\leq-\mathcal{L}(\omega)^2/2$.

For $T=+\infty$, Corollary~\ref{cor:conj_ervzua} gives $\gamma_\infty\geq\mathcal{L}(\omega)^2/2$ for the Dirichlet Laplacian when $\partial M\neq\emptyset$, and on $(\ker\Delta_g)^\perp$ when $M$ is closed. This resolves the maximal-distance problem from \cite{ErvedozaZuazua2011} with the optimal universal threshold and, in particular, covers smooth bounded domains in $\R^d$ with Dirichlet boundary conditions.

With $T<+\infty$, we recover the usual geometric bounds on the cost of null-controllability, and Corollary~\ref{cor:obs_sharp_cor} shows that the sharp observability inequality \eqref{sharpobs} is optimal within the integrated observability approach.

\paragraph{Coupled heat equations.} Let $\Omega\subset\R^d$ be a bounded connected domain with regular boundary, let $N\geq1$, and take $\V=\Omega\times\R^N$. For a constant symmetric nonnegative matrix $V\in\R^{N\times N}$, the Dirichlet realization of $A=-\Delta+V$ is a Laplace-type operator covered above. It governs the system
\[
\partial_tu_i-\Delta u_i+\sum_{j=1}^N V_i^ju_j=0\quad\text{in }(0,+\infty)\times\Omega,\qquad i=1,\ldots,N.
\]
Observability of coupled parabolic systems is studied, for instance, in \cite{LissyZuazua2019InternalObservabilityCoupledSystems,MazariFouquerPrivatTrelat2025LargeTimeOptimalObservation}. In the present symmetric constant-coefficient setting, when every component is observed on the same nonempty open set $\omega$, diagonalizing $V=P^\top\mathrm{diag}(\lambda_1,\ldots,\lambda_N)P$ reduces the problem to $N$ scalar heat equations with nonnegative constant potentials. Theorem~\ref{thm:main_thm} therefore gives the same barrier $\gamma\geq\mathcal{L}(\omega)^2/2$ for the finite- and infinite-time integrated constants. More generally, the Laplace-type formulation above includes Hodge Laplacians on differential forms (via the Weitzenb\"ock formula), even though the corresponding curvature endomorphism need not be pointwise nonnegative.

\subsection{\texorpdfstring{Schr\"odinger operators on $\R^d$}{Schrodinger operators on Euclidean space}}\label{subsec:schro-app}

We consider $X=\R^d$ with the Euclidean distance, $\mu$ the Lebesgue measure, and $A=-\Delta+V$ a Schr\"odinger operator on $L^2(\R^d)$, where $\Delta$ is the usual Laplacian and $V\in C^\infty(\R^d)\cap L^\infty(\R^d)$ is a nonnegative potential (the bounded case; the free Laplacian $V=0$ is included).
 The volume doubling property~\eqref{framework-VD} is immediate.
Since $V\geq 0$, $A$ is self-adjoint and nonnegative. Moreover, $e^{-tA}$ is dominated by the free heat semigroup via the Feynman--Kac formula (see \cite{DaviesSimon1984}): for all $f\in L^2(\R^d)\cap L^\infty(\R^d)$,
\[
\forall x\in\R^d,\quad |(e^{-tA}f)(x)|=\left |\E\Big[\exp\Big(-\int_0^t V(B_s^x)\,ds\Big) f(B_t^x)\Big]\right| \leq \E[|f(B_t^x)|] = (e^{t\Delta}|f|)(x),
\]
where $(B_t^x)_{t\geq0}$ is Brownian motion starting at $x$, normalized so that its generator is $\Delta$. The domination gives~\ref{item:A1}; applying it to data supported in $E$ and then using the free Davies--Gaffney estimate gives~\ref{item:A2}.
Assumption~\ref{item:A3} holds by the pointwise spectral asymptotics of H\"ormander \cite{Hormander1968Acta} for elliptic operators, whose proof is local and applies to $-\Delta+V$ on $\R^d$ with $V$ smooth and bounded: a bounded potential is a zeroth-order perturbation and the principal term is unchanged, $E_x(\lambda)\sim c_d\,\lambda^{d/2}$ with $c_d=\omega_d/(2\pi)^d$, hence $\alpha_2=d/2=\alpha_1$ and $\chi_2\equiv1$. Alternatively, since $0\leq V\leq\|V\|_{L^\infty}$, the Feynman--Kac formula gives the two-sided kernel bound $e^{-t\|V\|_{L^\infty}}\,p^0_t(x,x)\leq p_t(x,x)\leq p^0_t(x,x)=(4\pi t)^{-d/2}$, where $p^0$ denotes the free heat kernel; hence $p_t(x,x)\sim(4\pi t)^{-d/2}$ as $t\to0^+$, and Karamata's Tauberian theorem \cite[Theorem~1.7.1]{BinghamGoldieTeugels1987} yields the same asymptotics, at every $x\in\R^d$. For the free Laplacian $V=0$ everything is exact and uniform: $E_x(\lambda)=c_d\,\lambda^{d/2}$ independently of $x$.

\paragraph{Free Laplacian.} We start with $V=0$, so $A=-\Delta$. For the free heat equation on $\R^d$, observability from a measurable set $\omega$ is governed by a thickness (equidistribution) condition: there exist $L>0$ and $\rho>0$ such that
\begin{equation}\label{thick-eq}
\forall x\in\R^d,\quad|\omega\cap(x+L[0,1]^d)|\geq\rho L^d,
\end{equation}
see \cite{EgidiVeselic2018Sharp,WangWangZhangZhang2019Observable} for this sharp characterization; observability of the heat equation for an open $\omega$ in a bounded domain goes back to the earlier works \cite{FursikovImanuvilov1996,LebeauRobbiano1995CPDE}. Since $\mathcal{L}(\omega)\leq\frac{\sqrt{d}}{2}L$, Theorem~\ref{thm:main_thm} yields $\gamma \geq\frac{1}{2}\mathcal{L}(\omega)^2$. Using $\gamma=2\mathcal{K}_{\rm heat}$ to translate the control-norm estimate of \cite[Theorem~4.9]{NakicTauferTautenhahnVeselic2020_SharpEstimatesHomogenizationControlCost} into our squared-observability normalization, we obtain
\[\tfrac{1}{2}\mathcal{L}(\omega)^2\leq \gamma\leq C_1 d^2 L^2\ln^2(C_2^d/\rho),\]
where $C_1,C_2>0$ are universal constants. For standard periodic arrays whose cell size is comparable to the diameter of the largest hole, $L$ and $\mathcal{L}(\omega)$ are of the same order, so the two estimates have matching geometric scaling; no such comparison is asserted for an arbitrary thick set.

\paragraph{Bounded potential.} Now consider a general nonnegative $V\in C^\infty(\R^d)\cap L^\infty(\R^d)$. Under various additional assumptions on the potential, several works give geometric conditions on $\omega$ ensuring observability for the heat equation associated with $A=-\Delta+V$; see \cite{DickeSeelmannVeselic2024,DuanWangZhang2020,LebeauMoyano2019,NakicTauferTautenhahnVeselic2018,NakicTauferTautenhahnVeselic2020_SharpEstimatesHomogenizationControlCost}.
In particular, we say that $\omega\subset\R^d$ is $(G,\delta)$-equidistributed for some $G>0$ and $\delta\in(0,G/2)$ if
\[
\forall j\in(G\Z)^d,\ \exists z_j\in j+(-G/2,G/2)^d,\quad B(z_j,\delta)\subset\omega\cap(j+(-G/2,G/2)^d).
\]
For every point $x$ in a cell, each coordinate of $x-z_j$ has absolute value at most $G-\delta$; hence $\mathcal{L}(\omega)\leq\sqrt d\,(G-\delta)\leq\sqrt d\,G$. Using Theorem~\ref{thm:main_thm}, the identity $\gamma=2\mathcal{K}_{\rm heat}$, and \cite[Theorem~4.11]{NakicTauferTautenhahnVeselic2020_SharpEstimatesHomogenizationControlCost}, we obtain
$\tfrac{1}{2}\mathcal{L}(\omega)^2\leq \gamma\leq C_d G^2\ln^2(G/\delta)$,
where $C_d>0$ depends only on $d$.

\subsection{\texorpdfstring{Compact sub-Riemannian and Grushin-type structures}{Compact sub-Riemannian and Grushin-type structures}}\label{subsec:sR-app}

Let $M$ be a smooth connected compact manifold, possibly with boundary, equipped with a smooth positive measure $\mu$, and let $X_1,\ldots,X_m$ be smooth vector fields satisfying H\"ormander's bracket-generating condition. Their span is allowed to have nonconstant rank. We denote by $d_{\rm sR}$ the associated sub-Riemannian distance (see \cite{AgrachevBarilariBoscain2019}) and consider the sum-of-squares operator
\[
\Delta=\sum_{i=1}^m X_i^*X_i,
\]
where $X_i^*$ is the formal adjoint in $L^2(M,\mu)$. H\"ormander's theorem \cite{Hormander1967Acta} gives hypoellipticity and subelliptic estimates. We take the nonnegative self-adjoint realization of $\Delta$, with Friedrichs--Dirichlet boundary conditions when $\partial M\neq\emptyset$ (see \cite{CdVHT_AHL_2021}).

Let $M_{\rm reg}$ be the regular set, namely the open set of points at which the growth vector is locally constant. In the equiregular case, $M_{\rm reg}=M$. In the non-equiregular case considered here, we assume that $\mu(M\setminus M_{\rm reg})=0$ and that $Q(x)=Q_{\rm reg}$ for every $x\in M_{\rm reg}$, where $Q(x)$ is the local homogeneous dimension. This is only a full-measure regularity assumption; no stratification of the singular set is required. We also assume a uniform on-diagonal upper bound $p_t(x,x)\leq C t^{-Q_{\rm up}/2}$ for $0<t\leq1$, for some $Q_{\rm up}\geq Q_{\rm reg}$. It holds with $Q_{\rm up}=Q_{\rm reg}$ in the compact equiregular case, and with $(Q_{\rm up},Q_{\rm reg})=(3,2)$ for the classical Grushin model.

The volume doubling property holds for the compact bracket-generating structures under consideration (see \cite{AgrachevBarilariBoscain2019,SanchezCalle1984FundamentalSolutions}). The heat semigroup has a smooth kernel, and the preceding uniform bound gives~\ref{item:A1} with $\alpha_1=Q_{\rm up}/2$. Finite propagation for the sub-Riemannian wave equation was proved in \cite{Melrose1986PropagationWaveGroupSubelliptic}; by \cite{sikora2004riesz}, this is equivalent to~\ref{item:A2}.

For every $x\in M_{\rm reg}$, the local on-diagonal asymptotics $p_t(x,x)\sim c_0(x)t^{-Q_{\rm reg}/2}$ follow from \cite{CdVHT_AHL_2021} (see also \cite{ColinDeVerdiereHillairetTrelat2022arXiv2212_02920}). Karamata's Tauberian theorem \cite[Theorem~1.7.1]{BinghamGoldieTeugels1987} then gives $E_x(\lambda)\sim c(x)\lambda^{Q_{\rm reg}/2}$. Hence~\ref{item:A3} holds with $\alpha_2=Q_{\rm reg}/2$ and $\chi_2\equiv1$. The coefficient $c$ is positive on $M_{\rm reg}$ and may diverge near the singular set, which is allowed by Assumption~\ref{item:A3}. In the Grushin case, this yields $\alpha_1=3/2>\alpha_2=1$.

Under these assumptions, Theorem~\ref{thm:main_thm} applies. The controllability theory of Grushin heat equations is notably sensitive to the observation geometry. For the classical operator $G=-\partial_x^2-x^2\partial_y^2$ on $(-1,1)\times(0,1)$, observed from a vertical strip $(a,b)\times(0,1)$ with $0<a<b<1$, a positive minimal control time occurs \cite{BeauchardCannarsaGuglielmi2014JEMS}; minimal-time phenomena, controllable data and more general nonrectangular observation regions are further studied in \cite{BeauchardDardeErvedoza2020AIF,BeauchardMillerMorancey2015JDE,DuprezKoenig2020COCV}. Those works determine whether observability holds for a given geometry and horizon. Our theorem addresses the complementary issue of the unavoidable weight once an integrated inequality does hold: for every $0<\kappa<1/2$, it forces
\[
h(t)\leq A_{T,\kappa}e^{-\kappa\mathcal{L}(\omega)^2/t},
\]
equivalently $\limsup_{t\downarrow0}t\log h(t)\leq-\mathcal{L}(\omega)^2/2$, where $\mathcal{L}(\omega)$ is computed with $d_{\rm sR}$ (the Grushin distance in the model above). In particular, any exponential weight $e^{-\widetilde\gamma/t}$ must satisfy $\widetilde\gamma\geq\mathcal{L}(\omega)^2/2$; equivalently, the associated fast-control rate in the usual control-norm convention is at least $\mathcal{L}(\omega)^2/4$.

\subsection{\texorpdfstring{Compact metric graphs with $\delta'$-coupling}{Compact metric graphs with delta-prime coupling}}\label{subsec:graphs-app}

Let $G=(E,V)$ be a connected finite compact metric graph with edges of lengths $\ell_e$, total length $L=\sum_{e}\ell_e$, endowed with its path distance $d_G$ and Lebesgue measure $\mu$. Let $q\in L^\infty(G)$ with $q\geq 0$ smooth edgewise, and $\beta=(\beta_v)_{v\in V}\in(0,+\infty)^{|V|}$. We consider $A=-\Delta_G+q$ with $\delta'$-coupling conditions at each vertex $v$:
the normal derivative takes a common value $f'(v)$ at $v$ (i.e.\ the inward derivatives $\partial_n f_e(v)$ along the edges $e\sim v$ all agree), while the function values satisfy $\sum_{e\sim v}f_e(v)=\beta_v f'(v)$ (so $f$ itself may jump across $v$).
 As detailed in \cite{BerkolaikoKuchment2013,BK24a}, $A$ is the self-adjoint operator associated with the quadratic form
\[\mathfrak{a}_\beta[f]=\int_G(|f'|^2+q|f|^2)\,d\mu+\sum_{v\in V}\frac{1}{\beta_v}\Bigl|\sum_{e\sim v}f_e(v)\Bigr|^2,\qquad D(\mathfrak{a}_\beta)=\bigoplus_{e\in E}H^1(e).\]

The doubling property holds because $G$ is compact and one-dimensional. Since $D(\mathfrak a_\beta)=\bigoplus_{e\in E}H^1(e)$, the one-dimensional Sobolev inequality on the finite collection of edges, combined with the standard analytic-semigroup energy estimate, gives $\|e^{-tA}\|_{2\to\infty}\leq C t^{-1/4}$ for $0<t\leq1$; hence~\ref{item:A1} holds with $\alpha_1=1/2$. By Proposition~\ref{prop:heat-kernel}, the heat kernel $p_t\in L^\infty_\Delta(G\times G)$ exists. Since $G$ is compact and $A$ has compact resolvent, there is an orthonormal eigenbasis $(\varphi_n)_{n\geq1}$, ordered so that $0\leq\lambda_1\leq\lambda_2\leq\cdots$ and repeated according to multiplicity. The spectral theorem gives $p_t(x,y)=\sum_{n\geq1}e^{-t\lambda_n}\varphi_n(x)\varphi_n(y)$ in $L^2$ and edgewise away from the vertices; the classical continuous-kernel form of Mercer's theorem is unavailable because $p_t$ need not be continuous.
By \cite{BolteEggerRueckriemen2015HeatKernelResolventMetricGraphs}, $p_t$ is smooth edgewise (on each edge up to endpoints), but it is \emph{not continuous on $G\times G$} in general: the $\delta'$-coupling conditions allow discontinuities of $\varphi_n$ at vertices, so the semigroup $e^{-tA}$ is not Feller. This is exactly the reason why we work with $L^\infty_\Delta$ rather than continuous kernels.

Assumption~\ref{item:A2} follows from the standard Davies Lipschitz-weight argument \cite{Davies1989}. Indeed, multiplication by $e^\varphi$, with $\varphi$ Lipschitz and continuous on the metric graph, preserves $D(\mathfrak a_\beta)$. In the polarized conjugated form $\mathfrak a_\beta(e^{-\varphi}f,e^{\varphi}f)$, the factors $e^{-\varphi(v)}$ and $e^{\varphi(v)}$ cancel in each vertex term because all incident traces of $\varphi$ have the same value at $v$; hence the vertex contribution creates no commutator. The edge contribution then gives the usual bound by $\|\varphi'\|_\infty^2$.

\medskip\noindent\emph{Verification of~\ref{item:A3}.} By \cite[Proposition~8]{BK24a}, at every interior point $x$ of an edge,
$\frac{1}{N}\sum_{n=1}^N|\varphi_n(x)|^2\longrightarrow\frac{1}{L}$.
Consequently, with $N(\lambda)=\#\{n\mid\lambda_n\leq\lambda\}$,
$E_x(\lambda)=\sum_{\lambda_n\leq\lambda}|\varphi_n(x)|^2\sim\frac{N(\lambda)}{L}$.
The Weyl counting law $N(\lambda)\sim(L/\pi)\sqrt\lambda$ \cite{BerkolaikoKuchment2013,OdzakSceta2019WeylQuantumGraphs} therefore yields $E_x(\lambda)\sim\sqrt\lambda/\pi$. Since the vertex set is negligible, Assumption~\ref{item:A3} holds with $\alpha_2=1/2$, $c\equiv1/\pi$ and $\chi_2\equiv1$; thus $\alpha_1=\alpha_2=1/2$ in this example.

Therefore all results of Section~\ref{subsec:main-results} apply. To the best of our knowledge, heat observability for $\delta'$-coupled graph Laplacians has not been treated specifically. Related results concern parabolic equations on loops with other vertex conditions \cite{ApraizBarcenaPetisco2023ParabolicLoops} and wave equations with $\delta'$-coupling \cite{AmmariDucaJolyLeBalch2025GGCC,AvdoninEdwardLeugering2023CycleDeltaPrime}.

%\appendix
%\chapter{Appendix}
\appendix
\numberwithin{thm}{section}
\numberwithin{lem}{section}
\numberwithin{prop}{section}
\numberwithin{cor}{section}
\numberwithin{defn}{section}

\section{Appendix: pointwise spectral measures and weak wave kernels}\label{app:kernel-toolbox}

In this appendix we develop the functional-analytic framework for integral kernels on our metric measure space. The underlying ingredients are classical in concrete smooth settings (see \cite{Grigoryan2009_HeatKernel,Hormander1968Acta} for elliptic operators on compact manifolds), but the vector-bundle formulation for kernels defined $\Delta$-a.e.\ and the pointwise spectral-measure construction are included to make the argument rigorous without compact resolvent or kernel continuity.

Related kernel constructions on metric measure spaces are developed, for instance, in \cite{Davies1989,DaviesSimon1984,GrigoryanHu2014DV}, often in a Dirichlet-form or positivity-preserving setting. The statements below are tailored to the present vector-bundle framework, to kernels defined only $\Delta$-a.e., and to operators that need not have compact resolvent or arise from a regular strongly local Dirichlet form.

\subsection{\texorpdfstring{Kernels and $L^\infty_\Delta$ spaces}{Kernels and L-infinity-Delta spaces}}\label{app:subsec-Linfty}

In order to talk about integral operators acting on sections $\V\to X$, we introduce the external tensor product bundle
$\Vbox=\pi_1^*\V\otimes\pi_2^*(\V^*)\to X\times X$,
where $\pi_1,\pi_2:X\times X\to X$ are the projections. For $(x,y)\in X\times X$, $(\Vbox)_{(x,y)}\simeq\V_x\otimes\V_y^*\simeq\mathcal{L}(\V_y,\V_x)$, endowed with Hilbert--Schmidt norm $\HSnorm{\cdot}$.

An operator $T\in\mathcal{L}(L^p(X;\V),L^q(X;\V))$ is an \emph{integral operator} if there exists a measurable section $K_T:X\times X\to\Vbox$ such that
\[(Tf)(x)=\int_X K_T(x,y)f(y)\,d\mu(y) \quad \text{for a.e.\ } x.\]

\begin{defn}[$\Delta$-a.e.\ and $L^\infty_\Delta$]\label{def:Delta-ae}
Let $f,g\in\mathcal{B}(X\times X)$. We write $f=g$ \emph{$\Delta$-a.e.} if there exists a $\mu$-null set $N\subset X$ with $f(x,y)=g(x,y)$ for all $(x,y)\in(X\setminus N)^2$. We say $f$ is \emph{$\Delta$-essentially bounded} if $|f(x,y)|\leq C$ on $(X\setminus N)^2$ for some $C\geq 0$ and $\mu$-null $N$. The space $L^\infty_\Delta(X\times X)$ is the quotient by $\Delta$-a.e.\ equality, with norm
\[\|[f]\|_{L^\infty_\Delta}=\inf\{C\geq 0:\exists N,\,\mu(N)=0,\,|f|\leq C\text{ on }(X\setminus N)^2\}.\]
\end{defn}

The $\Delta$-a.e.\ relation is strictly stronger than $\mu\otimes\mu$-a.e.: there is a natural (non-injective in general) map $L^\infty_\Delta\to L^\infty(X\times X)$. The key advantage is that for $f\in L^\infty_\Delta$, the diagonal value $x\mapsto f(x,x)$ is well defined for $\mu$-a.e.\ $x$, with $\|f(\cdot,\cdot)\|_\infty\leq\|f\|_{L^\infty_\Delta}$.

This extends to bundle-valued kernels: $L^\infty_\Delta(X\times X;\Vbox)$ consists of $\Delta$-essentially bounded measurable sections modulo $\Delta$-a.e.\ equality. For $K\in L^\infty_\Delta(X\times X;\Vbox)$, the diagonal $x\mapsto K(x,x)\in\End(\V_x)$ is well defined $\mu$-a.e., and $x\mapsto\tr_{\V_x}(K(x,x))\in L^\infty(X)$.

For dealing with distribution-valued kernels (the wave kernel), let $W$ be a Hausdorff locally convex topological vector space with dual $W'$. We define $L^\infty_\Delta(X\times X;W'\otimes(\Vbox))$ as sections $F$ such that $\langle F,w\rangle\in L^\infty_\Delta(X\times X;\Vbox)$ for all $w\in W$, modulo the \emph{weak-$*$ $\Delta$-a.e.} relation: $F=G$ iff $\langle F,w\rangle=\langle G,w\rangle$ $\Delta$-a.e.\ for all $w$.
When $W$ is separable, a countable-dense argument shows:

\begin{lem}\label{lem:delta-ae}
If $W$ is separable, $F=G$ weak-$*$ $\Delta$-a.e.\ iff there exists a $\mu$-null set $N\subset X$ such that $F(x,y)=G(x,y)$ in $W'\otimes \mathcal{L}(\V_y,\V_x) \simeq \mathcal{L}(W,\mathcal{L}(\V_y,\V_x))$ for all $(x,y)\in(X\setminus N)^2$.
\end{lem}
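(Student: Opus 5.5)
The plan is to prove the two implications separately; the forward one is trivial and all the work is in the converse.

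If $F(x,y)=G(x,y)$ in $\Hom(W,\Hom(\V_y,\V_x))$ for every $(x,y)\in(X\setminus N)^2$, then for each $w\in W$ the pairings $\langle F,w\rangle$ and $\langle G,w\rangle$ agree on $(X\setminus N)^2$. This is exactly weak-$*$ $\Delta$-a.e.\ equality, with a single null set $N$ that does not depend on $w$.

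For the converse, I would fix a countable dense subset $\{w_k\}_{k\ge1}\subset W$, which exists by separability. The hypothesis gives, for each $k$, a $\mu$-null set $N_k$ such that $\langle F(x,y),w_k\rangle=\langle G(x,y),w_k\rangle$ for all $(x,y)\in(X\setminus N_k)^2$. Set $N=\bigcup_k N_k$. It is $\mu$-null as a countable union, and since $(X\setminus N)^2\subset(X\setminus N_k)^2$ for every $k$, all the countably many identities hold simultaneously on $(X\setminus N)^2$. This is the only place where separability enters, and it is essential: one cannot take an uncountable union of exceptional sets.

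Next, fix $(x,y)\in(X\setminus N)^2$. Here $F(x,y)$ and $G(x,y)$ belong to $W'\otimes\Hom(\V_y,\V_x)$. Since $\Hom(\V_y,\V_x)$ is finite-dimensional (of dimension $r_\V^2$), I would choose a basis and write $F(x,y)=\sum_j F_j(x,y)\otimes E_j$ with $F_j(x,y)\in W'$, and similarly for $G$. Each component is then a continuous linear functional on $W$. The two continuous maps $w\mapsto\langle F(x,y),w\rangle$ and $w\mapsto\langle G(x,y),w\rangle$ take values in a finite-dimensional Hausdorff space and coincide on the dense set $\{w_k\}$. Hence they coincide on all of $W$, by continuity and the Hausdorff property of the target. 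Therefore $F(x,y)=G(x,y)$ as elements of $\Hom(W,\Hom(\V_y,\V_x))$.

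The main (mild) obstacle is the identification $W'\otimes\Hom(\V_y,\V_x)\simeq\Hom(W,\Hom(\V_y,\V_x))$. Under this identification, elements are continuous linear maps determined by their values on $W$, so equality on a dense subset propagates to equality everywhere. Finite-dimensionality of the fiber makes this immediate, since continuity reduces to continuity of finitely many scalar components in $W'$.
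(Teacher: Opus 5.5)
Your proposal is correct and follows the paper's proof: a countable dense family $\{w_k\}\subset W$, the null set $N=\bigcup_k N_k$, and then equality of the continuous maps $F(x,y)$ and $G(x,y)$ on all of $W$ by density, with the other implication immediate. One cosmetic point: the implication you call ``forward'' is the converse in the order the statement is written, which does not affect the argument.
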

\begin{proof}
Pick a countable dense $\{w_k\}\subset W$. For each $k$, $\langle F,w_k\rangle=\langle G,w_k\rangle$ on $(X \setminus N_k)^2$ for a $\mu$-null set $N_k$. Set $N:=\bigcup_k N_k$.
On $(X\setminus N)^2$, density of $\{w_k\}$ together with the weak-$*$ separation of points yields $F(x,y)=G(x,y)$ in $W'\otimes\mathcal{L}(\V_y,\V_x)$. The converse is immediate.
\end{proof}
\subsection{Heat kernel and pointwise spectral measures}\label{subsec:Ultra-spec}

We start by establishing a few properties that will ensure that everything is well defined in our framework. From now on, the almost everywhere relation on $X$ will be understood as $\mu$-almost everywhere.

\subsubsection{Heat kernel and ultracontractivity} We first show that Assumption~\ref{item:A1} yields a heat kernel associated with $A$; a converse statement is recorded in Remark~\ref{rem:hk-conv}.

\begin{prop}\label{prop:heat-kernel}
    Under Assumption~\ref{item:A1}, for all $t>0$ there exists a unique $p_t \in L^\infty_\Delta(X \times X;\Vbox)$ that satisfies the following properties:
    \begin{enumerate}[label=(\roman*)]
        \item\label{it:heat-ker} For all $f \in L^2(X;\V)$ we have for a.e.\ $x\in X$
        \begin{equation}\label{heat-ker}
        (e^{-tA} f)(x) = \int_X p_t(x,y) f(y)\, d\mu(y).
        \end{equation}
        \item\label{it:sym-ker} We have $p_t(x,y) = p_t(y,x)^{*}$ for $\Delta$-a.e. $(x,y) \in X \times X$.
        \item\label{it:sg-id} Let $s>0$, we have the semigroup identity for $\Delta$-a.e. $(x,y) \in X \times X$
        \begin{equation}\label{semigroup_kernel_bundle}
             p_{t+s}(x,y) = \int_X p_t(x,z) p_s(z,y)\, d\mu(z).
        \end{equation}
        In particular, for every $t>0$ and for almost every $x\in X$,
        \begin{equation}\label{diag_trace_vs_L2}
            \tr_{\V_x}\!\big(p_{2t}(x,x)\big) = \int_X \HSnorm{p_{t}(y,x)}^2\, d\mu(y)
            \leq r_\V \,\|e^{-tA}\|_{2 \rightarrow \infty}^2.
        \end{equation}
    \end{enumerate}
\end{prop}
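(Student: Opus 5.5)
The natural route is a Dunford--Pettis type argument: bounded operators $L^1\to L^\infty$ have bounded kernels. I would then upgrade the resulting $\mu\otimes\mu$-a.e.\ kernel to a $\Delta$-a.e.\ object by \emph{defining} $p_t$ through the semigroup identity.

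\textbf{Step 1 (a kernel at half time).} By \ref{item:A1}, $e^{-tA}:L^2\to L^\infty$ is bounded. By duality and self-adjointness, $e^{-tA}:L^1\to L^2$ is bounded with the same norm. Hence $e^{-2tA}=e^{-tA}e^{-tA}:L^1\to L^\infty$ is bounded. For a fixed operator $S=e^{-tA}\in\mathcal{L}(L^2,L^\infty)$, I would represent $S$ by a column kernel. Fix a local orthonormal frame of $\V$ and use separability of $X$. Each evaluation $f\mapsto (Sf)(x)\cdot e$ is then, for a.e.\ $x$, a bounded functional on $L^2$. Made measurable via a countable dense set of $f$'s, this yields a measurable section $k_t(x,\cdot)\in L^2(X;\mathcal{L}(\V_\cdot,\V_x))$ with $\sup_x\|k_t(x,\cdot)\|_{L^2}\le C\|e^{-tA}\|_{2\to\infty}$ (up to rank factors) and $(e^{-tA}f)(x)=\int k_t(x,y)f(y)\,d\mu(y)$ a.e. This is the standard Riesz-representation step. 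The measurability and choice of representatives is the main technical obstacle, and I would handle it with a countable dense subset of $L^2$ together with a monotone-class argument.

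\textbf{Step 2 (definition on a full square).} Take a $\mu$-null set $N$ outside of which $k_{t/2}(x,\cdot)$ is defined with bounded $L^2$ norm. Then set, for all $x,y\notin N$,
\[
p_t(x,y):=\int_X k_{t/2}(x,z)\,k_{t/2}(y,z)^*\,d\mu(z).
\]
This is defined everywhere on $(X\setminus N)^2$, and it is bounded by Cauchy--Schwarz. It lies in $L^\infty_\Delta$ and is symmetric, $p_t(x,y)=p_t(y,x)^*$, \emph{identically} on the square, which gives (ii). The representation formula (i) follows from $e^{-tA}=e^{-\frac t2A}(e^{-\frac t2A})^*$ and Fubini.

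\textbf{Step 3 (uniqueness and the semigroup identity).} For uniqueness, two bounded kernels representing the same operator agree $\mu\otimes\mu$-a.e. That alone is weaker than $\Delta$-a.e., so uniqueness has to be read within the canonical construction. I would show the construction is independent of the chosen factorization: for a.e.\ $x$, the column $p_t(\cdot,x)$ equals $e^{-\frac t2A}k_{t/2}(x,\cdot)^*$ in $L^2$. This identifies $p_t(y,x)$ for $y,x$ off a null set via a countable density argument in $L^2$. The semigroup identity (iii) then follows by writing $p_{t+s}$ through the same column identity: $p_{t+s}(\cdot,y)=e^{-tA}p_s(\cdot,y)$ in $L^2$ for a.e.\ $y$. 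Both sides of the identity are then inner products of $L^2$ columns defined on a common full square.

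Finally, \eqref{diag_trace_vs_L2} is obtained by specializing (iii) with $s=t$ and $y=x$, using the symmetry from (ii), and taking the trace. This gives $\sum_\ell\|p_t(\cdot,x)e_\ell\|_2^2$, and each column norm is bounded by $\|e^{-tA}\|_{2\to\infty}$ by duality, which produces the factor $r_\V$.
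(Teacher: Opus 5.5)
\textbf{Gap: the uniqueness claim is not proved.} Your existence construction is the paper's. With $R_{t/2,x}f=(e^{-\frac t2A}f)(x)$ the Riesz evaluation maps, your $\int_X k_{t/2}(x,z)k_{t/2}(y,z)^*\,d\mu(z)$ is exactly the paper's $R_{t/2,x}R_{t/2,y}^*$. Symmetry, the semigroup identity and \eqref{diag_trace_vs_L2} are then obtained in the same way.

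The statement asserts that \emph{any} family $\{\tilde p_t\}_{t>0}\subset L^\infty_\Delta$ satisfying (i)--(iii) coincides $\Delta$-a.e.\ with yours. You instead declare that uniqueness ``has to be read within the canonical construction''. You then only argue that the construction does not depend on the chosen representative of $k_{t/2}$. That is a weaker statement, and it says nothing about a competitor $\tilde p$, which is not built from $k_{t/2}$ at all.

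You correctly observe that (i) alone determines a kernel only up to $\mu\otimes\mu$-null sets. For instance, when $\mu$ has no atoms, one can alter a kernel self-adjointly on the diagonal without affecting (i) or (ii). That is precisely why the semigroup identity belongs to the characterization, and it is the missing idea: (iii) is the rigidity mechanism.

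Let $\tilde p$ satisfy (i)--(iii). Testing (i) at time $t/2$ on a countable family of sections (indicators of a countable generating family of bounded sets times local frame vectors) gives a $\mu$-null set $M$. For every $x\notin M$, $\tilde p_{t/2}(x,\cdot)=p_{t/2}(x,\cdot)$ $\mu$-a.e. Now take $x,y$ outside $M$ and outside the exceptional sets of (ii) at time $t/2$ and of (iii) with $s=t/2$, for both families. Then
\[
\tilde p_t(x,y)=\int_X\tilde p_{t/2}(x,z)\,\tilde p_{t/2}(y,z)^*\,d\mu(z)=\int_X p_{t/2}(x,z)\,p_{t/2}(y,z)^*\,d\mu(z)=p_t(x,y),
\]
which is $\Delta$-a.e.\ equality. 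This is the paper's argument. Almost-everywhere information on rows at time $t/2$ becomes $\Delta$-a.e.\ information at time $t$, because (ii)--(iii) write every value on a full square as a pairing of two rows.

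A minor point in Step~2: when $\mu(X)=+\infty$, Fubini is directly justified only for $f\in L^1\cap L^2$. You can conclude (i) for all $f\in L^2$ by density, since the row $p_t(x,\cdot)$ lies in $L^2$ with norm at most $\|k_{t/2}(x,\cdot)\|_{L^2}$.
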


\begin{proof}

\emph{Existence.} Fix $t>0$. Since $e^{-tA/2}\in\mathcal{L}(L^2(X;\V),L^\infty(X;\V))$, separability and the Riesz representation theorem provide, outside a null set $N_t$, bounded evaluation maps $R_{t/2,x}:L^2(X;\V)\to\V_x$. Their adjoints take values in $L^2(X;\V)$, and
$p_t(x,y):=R_{t/2,x}R_{t/2,y}^*$, for $x,y\notin N_t$,
is a bounded measurable kernel of $e^{-tA}$ on the full-measure square $(X\setminus N_t)^2$. This formula gives the symmetry. For fixed $s,t>0$, composing the corresponding rows and columns gives a kernel of $e^{-(t+s)A}$; kernel uniqueness therefore yields the semigroup identity $\Delta$-a.e. Applying that identity with $s=t$, together with symmetry, gives \eqref{diag_trace_vs_L2} for each fixed $t$. Exceptional sets may be unified whenever finitely or countably many times are involved; this is the only simultaneous version used below.

\smallskip
\emph{Uniqueness.} Now if there exists $\{\tilde p_t\}_{t>0}$ also satisfying~\ref{it:heat-ker}--\ref{it:sg-id}, then for each $t>0$, thanks to~\ref{it:heat-ker}, a separability argument gives a null set $M_t$ with $\tilde p_t(x,\cdot)=p_t(x,\cdot)$ a.e.\ for $x\notin M_t$. The semigroup identity and symmetry for both kernels give, for $x,y\notin M_{t/2}$:
$\tilde p_t(x,y)=\int\tilde p_{t/2}(x,z)\tilde p_{t/2}(z,y)\,d\mu(z)=\int p_{t/2}(x,z)p_{t/2}(z,y)\,d\mu(z)=p_t(x,y)$,
so $\tilde p_t=p_t$ $\Delta$-a.e.
\end{proof}

\begin{remark}\label{rem:hk-conv}
Conversely, suppose that $\{\tilde p_t\}_{t>0}\subset L^\infty_\Delta$ satisfies~\ref{it:heat-ker}--\ref{it:sg-id} and that $\tr_{\V_x}(\tilde p_{2t}(x,x))\leq Ct^{-\alpha_1}$ for $t\in(0,1]$ and a.e.\ $x$. Then Cauchy--Schwarz and the semigroup identity give $|(e^{-tA}f)(x)|\leq\tr_{\V_x}(\tilde p_{2t}(x,x))^{1/2}\|f\|_2$, hence Assumption~\ref{item:A1}.
\end{remark}

\subsubsection{Pointwise spectral measures} Our goal is to define integral kernels for compactly supported functions of $A$.
The purpose of this construction is twofold: it gives a pointwise Plancherel identity for the columns of kernels, and it provides the spectral representation needed to define weak wave kernels.
On a compact Riemannian manifold, smooth eigenfunctions and compact resolvent give the pointwise spectral expansion $K_{\phi(A)}(x,y)=\sum_n\phi(\lambda_n)\varphi_n(x)\otimes\varphi_n(y)^\flat$, suggesting the measure-valued kernel
\[
\Pi_{x,y} = \sum_n \delta_{\lambda_n}\, \varphi_n(x)\otimes \varphi_n(y)^\flat,
\]
so that $K_{\phi(A)}(x,y) = \int \phi(\lambda)\,d\Pi_{x,y}(\lambda)$. We extend this notion to our general framework. Since $A$ may lack compact resolvent or heat kernel continuity, we proceed differently.

Let $t>0$ and $x\in X$. As in Proposition~\ref{prop:heat-kernel}, we consider the Hilbert--Schmidt operator
\begin{equation*}
P_{t,x}:\V_x\to L^2(X;\V),\qquad (P_{t,x}u)(y)=p_t(y,x)u.
\end{equation*}
By \eqref{heat-ker} and the symmetry $p_t(x,y)=p_t(y,x)^*$, its adjoint satisfies for a.e.\ $x$ and all $g\in L^2(X;\V)$,
\begin{equation}\label{PtAdjoint}
P_{t,x}^*g=\int_X p_t(x,y)g(y)\,d\mu(y)=(e^{-tA}g)(x)\in \V_x.
\end{equation}
We introduce the following notation for the bundle of $\Vbox$-valued locally finite measures on $[0,+\infty)$
\begin{equation*}
\Mloc\big([0,+\infty);\Vbox\big) = C_c([0,+\infty))' \otimes (\Vbox).
\end{equation*}
Now we are able to state and prove the existence of the pointwise spectral measure of $A$ in our framework.

\begin{prop}\label{prop:spectral-mes}
There exists a unique $\Pi\in L^\infty_\Delta(X\times X;\Mloc([0,+\infty);\Vbox))$ such that for every bounded Borel $B\subset[0,+\infty)$ and $t>0$:
\begin{equation}\label{prop-specmes}
    \Pi_{x,y}(B)=P_{t/2,x}^*\,1_B(A)e^{tA}\,P_{t/2,y}\in\Hom(\V_y,\V_x)\quad\Delta\text{-a.e.},
\end{equation}
and the right-hand side is independent of $t$. Furthermore:
\begin{enumerate}[label=(\roman*)]
        \item For $\phi:[0,+\infty)\to\R$ bounded measurable with compact support, $\phi(A)$ is an integral operator with kernel
        \begin{equation}\label{kernel_phiA_by_E}
            K_{\phi(A)}(x,y)=\int_0^{+\infty}\phi(\lambda)\,d\Pi_{x,y}(\lambda)\in L^\infty_\Delta(X\times X;\Vbox),
        \end{equation}
        and defining $\nu_x=\tr_{\V_x}(\Pi_{x,x})$, we have for a.e.\ $x$:
        \begin{equation}\label{HS_L2_identity_app}
        \int_X\HSnorm{K_{\phi(A)}(x,y)}^2\,d\mu(y)=\int_0^{+\infty}\phi(\lambda)^2\,d\nu_x(\lambda).
        \end{equation}
        \item For almost every $x \in X$, $\nu_x$ is a nonnegative $\sigma$-finite measure on $[0,+\infty)$. Moreover, for $\Delta$-a.e. $(x,y) \in X \times X$, we denote by $|\Pi_{x,y}|_{\mathrm{HS}}$ the total variation measure of $\Pi_{x,y}$ induced by $\HSnorm{\cdot}$, then for every bounded Borel set $B \subset [0,+\infty)$
        \begin{equation}\label{TV_bound_bundle}
            |\Pi_{x,y}|_{\mathrm{HS}}(B) \leq \nu_x^{1/2}(B)\,  \nu_y^{1/2}(B).
        \end{equation}
        Consequently, for every nonnegative Borel function $q$ on $[0,+\infty)$, with the convention that the two sides may take the value $+\infty$,
        \begin{equation}\label{weighted-TV-bound}
        \int_0^{+\infty}q(\lambda)\,d|\Pi_{x,y}|_{\mathrm{HS}}(\lambda)
        \leq
        \left(\int_0^{+\infty}q(\lambda)\,d\nu_x(\lambda)\right)^{1/2}
        \left(\int_0^{+\infty}q(\lambda)\,d\nu_y(\lambda)\right)^{1/2}.
        \end{equation}

    \end{enumerate}
\end{prop}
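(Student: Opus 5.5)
The plan is to define $\Pi_{x,y}(B)$ directly by the formula \eqref{prop-specmes}, check that it does not depend on $t$, and assemble these set functions into measures on a common full-measure square. Then \eqref{kernel_phiA_by_E} and \eqref{HS_L2_identity_app} follow from \eqref{PtAdjoint}, and the total-variation bound from a Cauchy--Schwarz argument.

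\emph{Step 1: definition and independence of $t$.} For a bounded Borel set $B$, the operator $1_B(A)e^{tA}$ is bounded, with norm at most $e^{t\sup B}$. By Proposition~\ref{prop:heat-kernel}, $P_{t/2,y}:\V_y\to L^2(X;\V)$ is bounded for a.e.\ $y$, so the right-hand side of \eqref{prop-specmes} is a well-defined element of $\Hom(\V_y,\V_x)$ off a null set depending on $t$.

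For independence of $t$, compare $t$ and $t+2s$. The semigroup identity gives $P_{t/2+s,y}=e^{-sA}P_{t/2,y}$ for a.e.\ $y$, since both sides have column kernel $p_{t/2+s}(\cdot,y)$. Since $e^{-sA}$ commutes with $1_B(A)$, the factors $e^{-sA}$ on each side cancel against $e^{(t+2s)A}$, and the two expressions agree.

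To control exceptional sets, I restrict to rational $t$ and to $B$ in a countable ring generated by rational intervals. This produces one null set $N$ outside of which $B\mapsto\Pi_{x,y}(B)$ is finitely additive on that ring for every pair $x,y\notin N$.

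\emph{Step 2: positivity and measure extension (the expected main obstacle).} The difficulty is upgrading these finitely additive matrix-valued set functions to genuine locally finite measures, simultaneously for all $(x,y)\in(X\setminus N)^2$.

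On the diagonal, $\langle\Pi_{x,x}(B)u,u\rangle=\|1_B(A)e^{tA/2}P_{t/2,x}u\|^2\ge0$, so $\Pi_{x,x}$ is nonnegative. For $B\subset[0,\Lambda]$ with $t$ fixed, $\tr\,\Pi_{x,x}(B)$ equals the finite measure $\|1_B(A)e^{tA/2}P_{t/2,x}\|_{HS}^2$. This is the spectral measure of the Hilbert--Schmidt family $e^{tA/2}1_{[0,\Lambda]}(A)P_{t/2,x}$, hence countably additive. On the ring, this gives $\sigma$-additivity of $\nu_x$ restricted to $[0,\Lambda]$, and therefore a Carathéodory extension.

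Off the diagonal, I use the factorization $\Pi_{x,y}(B)=S_x(B)^*S_y(B)$ with $S_y(B)=1_B(A)e^{tA/2}P_{t/2,y}$. Cauchy--Schwarz in $L^2$ then yields $\HSnorm{\Pi_{x,y}(B)}\le\nu_x(B)^{1/2}\nu_y(B)^{1/2}$. This bound gives countable additivity of $\Pi_{x,y}$ from that of $\nu_x$ and $\nu_y$, so $\Pi_{x,y}$ extends to a $\Hom$-valued measure. Summing the bound over partitions and applying the discrete Cauchy--Schwarz inequality gives \eqref{TV_bound_bundle}. Approximating $q$ by simple functions gives \eqref{weighted-TV-bound}.

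Uniqueness holds because the ring generates the Borel sets, and any two candidates agree on it off a null set.

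\emph{Step 3: kernel formula and Plancherel.} For $\phi$ bounded with support in $[0,\Lambda]$, write $\phi(A)=P$-sandwich as follows:
\[
\phi(A)=e^{-tA/2}\bigl(\phi(A)e^{tA}\bigr)e^{-tA/2}.
\]
By \eqref{PtAdjoint}, its kernel is $P_{t/2,x}^*\,\phi(A)e^{tA}\,P_{t/2,y}$. For simple $\phi$ this equals $\int\phi\,d\Pi_{x,y}$ by \eqref{prop-specmes}. For general $\phi$, I pass to the limit using the TV bound and dominated convergence. Boundedness in $L^\infty_\Delta$ follows from $|\phi|\le\|\phi\|_\infty$ together with $\nu_x([0,\Lambda])\le e^{t\Lambda}\tr\,p_t(x,x)$, which is essentially bounded by \eqref{diag_trace_vs_L2}.

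Finally, the column $K_{\phi(A)}(\cdot,x)$ is $\phi(A)P_{t,x}$ up to the evident rewriting. Hence
\[
\int\HSnorm{K_{\phi(A)}(x,y)}^2\,d\mu(y)=\tr\bigl(P_{t/2,x}^*|\phi|^2(A)e^{tA}P_{t/2,x}\bigr)=\int|\phi|^2\,d\nu_x,
\]
which is \eqref{HS_L2_identity_app}, where the last equality is Step 3 applied to $|\phi|^2$ on the diagonal.
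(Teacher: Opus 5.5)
Your proposal is correct and follows essentially the paper's proof: the sandwich definition of $\Pi_{x,y}(B)$, the factorization $\Pi_{x,y}(B)=S_x(B)^*S_y(B)$ with Cauchy--Schwarz for \eqref{TV_bound_bundle}, the identification of the kernel through \eqref{PtAdjoint}, and the trace computation for \eqref{HS_L2_identity_app}. Two of your steps differ from the paper. The countable-ring/Carath\'eodory detour in Step 2 is unnecessary: at a fixed $t$, $B\mapsto P_{t/2,x}^*1_B(A)e^{tA}P_{t/2,y}$ is already countably additive on all Borel sets for every pair in one full-measure square, by strong $\sigma$-additivity of the spectral projections, which is how the paper argues. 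Your direct semigroup argument for $t$-independence is a nice touch, since it gives independence on a full-measure square, whereas the paper deduces it from the kernel formula. One slip to fix: the column is $y\mapsto K_{\phi(A)}(x,y)^*u=(e^{tA/2}\phi(A)P_{t/2,x}u)(y)$, not $\phi(A)P_{t,x}u$; the trace identity you then display is nevertheless the correct one.
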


\begin{proof}
Fix $t>0$ and a bounded interval $I\subset[0,+\infty)$.
For $x,y\in X$ (outside a negligible set, we fix a representative for $p_t$ as in the proof of Proposition~\ref{prop:heat-kernel}) and for Borel sets $B\subset I$, define
\begin{equation*}
\Pi_{x,y}(B)=P_{t/2,x}^*\;1_B(A)e^{tA}\;P_{t/2,y}\in \Hom(\V_y,\V_x).
\end{equation*}
Since $\lambda\mapsto 1_B(\lambda)e^{t\lambda}$ is bounded on $I$, the operator $1_B(A)e^{tA}$ is bounded on $L^2(X;\V)$, hence $\Pi_{x,y}(B)$ is well-defined.
Let $(B_n)_{n\ge0}$ be a sequence of disjoint Borel subsets of $I$. Since
$1_{\cup_n B_n}(\lambda)e^{t\lambda}=\sum_n 1_{B_n}(\lambda)e^{t\lambda}$ for all $\lambda\in I$, spectral calculus gives
\begin{equation*}
1_{\cup_n B_n}(A)e^{tA}=\sum_{n\ge0}1_{B_n}(A)e^{tA}
\quad\text{(strongly on }L^2(X;\V)\text{)}.
\end{equation*}
Therefore $\Pi_{x,y}\Big(\bigcup_{n\ge0}B_n\Big)=\sum_{n\ge0}\Pi_{x,y}(B_n)$,
so for each $(x,y)$, $\Pi_{x,y}$ is a countably additive $\Hom(\V_y,\V_x)$-valued measure on $I$. Because the same formula is used on one fixed full-measure square for every bounded Borel set, the measures obtained on bounded intervals are compatible under restriction and define a locally finite measure on $[0,+\infty)$. Weak measurability follows from the measurability of the heat-kernel columns and a monotone-class argument.
To obtain $\Delta$-essential boundedness, fix $\phi\in C_c([0,+\infty))$ and consider the bounded operator $\phi(A)e^{tA}$.
Then for $\Delta$-a.e.\ $(x,y)$,
\[
\HSnorm{\int_0^{+\infty}\phi(\lambda)\,d\Pi_{x,y}(\lambda)}
=\HSnorm{P_{t/2,x}^*\phi(A)e^{tA}P_{t/2,y}}
\le \|P_{t/2,x}\|_{\mathrm{HS}}\,\|\phi(A)e^{tA}\|\,\|P_{t/2,y}\|_{\mathrm{HS}}.
\]
By \eqref{diag_trace_vs_L2} applied at time $t/2$, $\|P_{t/2,x}\|_{\mathrm{HS}}^2=\tr_{\V_x}(p_t(x,x))$ is bounded a.e., hence the above estimate yields the required boundedness.

Now let $\phi:[0,+\infty)\to\R$ be bounded measurable with compact support. Define
$K(x,y)=\int_0^{+\infty}\phi(\lambda)\,d\Pi_{x,y}(\lambda)$.
Then $K\in L^\infty_\Delta(X\times X;\Vbox)$.
We show that $K$ is the kernel of $\phi(A)$.
Fix $f\in L^2(X;\V)$ and $x$ outside a null set where the kernel formula holds.
For any $u\in\V_x$, by definition of adjoints, Fubini, and \eqref{PtAdjoint},
\[
\Big\langle \int_X K(x,y)f(y)\,d\mu(y),\,u\Big\rangle_{\V_x}
=\int_X \langle f(y),K(x,y)^*u\rangle_{\V_y}\,d\mu(y)
=\scalp{f}{\,y\mapsto K(x,y)^*u\,}_{L^2(X;\V)}.
\]
Using $K(x,y)=P_{t/2,x}^*\phi(A)e^{tA}P_{t/2,y}$ and functional calculus,
$K(x,y)^*=P_{t/2,y}^*e^{tA}\phi(A)P_{t/2,x}$.
Hence, as a section in $y$,
\begin{equation}\label{Kstar_as_section}
y\mapsto K(x,y)^*u
=P_{t/2,y}^*e^{tA}\phi(A)P_{t/2,x}u
=\big(e^{-t/2A}e^{tA}\phi(A)P_{t/2,x}u\big)(y)
=\big(e^{t/2A}\phi(A)P_{t/2,x}u\big)(y).
\end{equation}
Therefore
\begin{multline*}
\Big\langle \int_X K(x,y)f(y)\,d\mu(y),\,u\Big\rangle_{\V_x}
=\scalp{f}{e^{t/2A}\phi(A)P_{t/2,x}u}_{L^2(X;\V)} \\
=\scalp{e^{t/2A}\phi(A)f}{P_{t/2,x}u}_{L^2(X;\V)}
=\langle P_{t/2,x}^*e^{t/2A}\phi(A)f,\,u\rangle_{\V_x}.
\end{multline*}
Finally, by spectral calculus,
\begin{equation*}
P_{t/2,x}^*e^{t/2A}\phi(A)f=(e^{-t/2A}e^{t/2A}\phi(A)f)(x)=(\phi(A)f)(x),
\end{equation*}
hence $\int_X K(x,y)f(y)\,d\mu(y)=(\phi(A)f)(x)$ for a.e.\ $x$.
This proves \eqref{kernel_phiA_by_E} and in particular the independence of \eqref{prop-specmes} from $t$.
We now prove \eqref{HS_L2_identity_app}.
Fix $x$ and choose an orthonormal basis $(u_\ell)_{\ell=1}^r$ of $\V_x$.
For each $\ell$, define the section
$s_\ell(y)=K(x,y)^*u_\ell\in \V_y$.
Then
\[
\sum_{\ell=1}^r\|s_\ell\|_{L^2(X;\V)}^2
=\int_X \sum_{\ell=1}^r |K(x,y)^*u_\ell|_{\V_y}^2\,d\mu(y)
=\int_X \HSnorm{K(x,y)}^2\,d\mu(y).
\]
On the other hand, by \eqref{Kstar_as_section} we have $s_\ell=e^{t/2A}\phi(A)P_{t/2,x}u_\ell$, hence
\begin{multline*}
\sum_{\ell=1}^r\|s_\ell\|_{L^2(X;\V)}^2
=\sum_{\ell=1}^r \scalp{e^{t/2A}\phi(A)P_{t/2,x}u_\ell}{e^{t/2A}\phi(A)P_{t/2,x}u_\ell}_{L^2(X;\V)} \\
=\sum_{\ell=1}^r \scalp{P_{t/2,x}u_\ell}{\phi(A)^2e^{tA}P_{t/2,x}u_\ell}_{L^2(X;\V)}
=\tr_{\V_x}\!\big(P_{t/2,x}^*\phi(A)^2e^{tA}P_{t/2,x}\big).
\end{multline*}
By definition of $\Pi_{x,x}$, this equals $\int_0^{+\infty}\phi(\lambda)^2\,d\nu_x(\lambda)$, proving \eqref{HS_L2_identity_app}.

Finally, we prove \eqref{TV_bound_bundle}.
Fix a bounded Borel set $B\subset[0,+\infty)$ and set $S_B=1_B(A)e^{tA}$, which is bounded and nonnegative self-adjoint on $L^2(X;\V)$.
Let $(e_i)_{i=1}^r$ be an orthonormal basis of $\V_y$ and $(f_j)_{j=1}^r$ an orthonormal basis of $\V_x$.
Then, using $S_B^{1/2}$ and Cauchy--Schwarz in $L^2(X;\V)$,
\[
\langle \Pi_{x,y}(B)e_i,f_j\rangle_{\V_x}
=\scalp{S_B P_{t/2,y}e_i}{P_{t/2,x}f_j}_{L^2(X;\V)}
=\scalp{S_B^{1/2}P_{t/2,y}e_i}{S_B^{1/2}P_{t/2,x}f_j}_{L^2(X;\V)}.
\]
Hence
\begin{align*}
\HSnorm{\Pi_{x,y}(B)}^2
&=\sum_{i=1}^r\sum_{j=1}^r |\langle \Pi_{x,y}(B)e_i,f_j\rangle_{\V_x}|^2 \\
&\le \sum_{i=1}^r\sum_{j=1}^r \|S_B^{1/2}P_{t/2,y}e_i\|_{L^2(X;\V)}^2\,\|S_B^{1/2}P_{t/2,x}f_j\|_{L^2(X;\V)}^2 \\
&=\tr_{\V_y}\!\big(P_{t/2,y}^*S_BP_{t/2,y}\big)\, \tr_{\V_x}\!\big(P_{t/2,x}^*S_BP_{t/2,x}\big)
=\nu_y(B)\,\nu_x(B).
\end{align*}
Thus $\HSnorm{\Pi_{x,y}(B)}\le \nu_x(B)^{1/2}\nu_y(B)^{1/2}$.
Now for a finite partition $(B_n)_{n=1}^N$ of $B$,
\begin{multline*}
\sum_{n=1}^N \HSnorm{\Pi_{x,y}(B_n)}
\le \sum_{n=1}^N \nu_x(B_n)^{1/2}\nu_y(B_n)^{1/2} \\
\le \Big(\sum_{n=1}^N \nu_x(B_n)\Big)^{1/2}\Big(\sum_{n=1}^N \nu_y(B_n)\Big)^{1/2}
=\nu_x(B)^{1/2}\nu_y(B)^{1/2}.
\end{multline*}
Taking the supremum over partitions gives \eqref{TV_bound_bundle}.

For completeness, \eqref{weighted-TV-bound} follows first for a nonnegative simple function $q=\sum_{j=1}^N a_j1_{B_j}$, with the $B_j$ disjoint, since \eqref{TV_bound_bundle} and Cauchy--Schwarz give
\[
\int q\,d|\Pi_{x,y}|_{\mathrm{HS}} \leq\sum_{j=1}^Na_j\nu_x(B_j)^{1/2}\nu_y(B_j)^{1/2} \leq\left(\int q\,d\nu_x\right)^{1/2} \left(\int q\,d\nu_y\right)^{1/2}.
\]
The general case follows by monotone approximation.

Finally, we prove uniqueness. If $\Pi'$ also satisfies \eqref{prop-specmes}, then for every interval $B$ with rational endpoints one has $\Pi'_{x,y}(B)=\Pi_{x,y}(B)$ $\Delta$-a.e.; since these intervals form a countable family generating the Borel $\sigma$-algebra of $[0,+\infty)$ and both measures are locally finite, Lemma~\ref{lem:delta-ae} (applied with $W=C_c([0,+\infty))$, which is separable) yields $\Pi'=\Pi$ in $L^\infty_\Delta\big(X\times X;\Mloc([0,+\infty);\Vbox)\big)$.
\end{proof}

Coming back to our general setting, one might wonder why we introduced the pointwise spectral measures $\Pi_{x,y}$ instead of directly saying that, for a bounded compactly supported Borel function $\phi : [0,+\infty) \rightarrow \mathbb{R}$, the integral kernel of $\phi(A)$ is given by
\begin{equation*}
    \Delta\text{-a.e. } (x,y) \in X \times X, \quad K_{\phi(A)}(x,y) = P_{t/2,x}^*\phi(A) e^{t A} P_{t/2,y}.
\end{equation*}
The reason is that when $\phi$ is not compactly supported, the above formula might not make sense, while the integral with respect to the pointwise spectral measure will still make sense thanks to the quantitative estimate in Assumption~\ref{item:A1}. In order to do so, we recall the diagonal spectral function
\begin{equation*}
    E_{x}(\lambda) = \tr_{\V_x}(K_{1_{[0,\lambda]}(A)}(x,x)) = \nu_x([0,\lambda]),
\end{equation*}
and everything is made clear in the following proposition.
\begin{prop}\label{prop:kernel-weyl}
 Let $\phi : [0,+\infty) \rightarrow \mathbb{R}$ be a measurable function, such that there exist $C>0$ and $\varepsilon>0$ satisfying
        \begin{equation*}
            \forall \lambda \geq 0, \quad |\phi(\lambda)| \leq \frac{C}{(1+\lambda)^{\alpha_1 + \varepsilon}},
        \end{equation*}
        where $\alpha_1$ comes from~\ref{item:A1}.
        Then $\phi(A)$ is an integral operator with kernel $K_{\phi(A)} \in L^\infty_\Delta(X\times X;\Vbox)$ given by
        \begin{equation}\label{kernel_loc_weyl}
             \Delta\text{-a.e. } (x,y) \in X \times X, \quad K_{\phi(A)}(x,y) = \int_{0}^{+\infty} \phi(\lambda)\, d\Pi_{x,y}(\lambda),
        \end{equation}
        and also, for almost every $x \in X$, we have
        \begin{equation}\label{L2rowHS}
            \int_X \HSnorm{K_{\phi(A)}(x,y)}^2\,d\mu(y)=\int_0^{+\infty} |\phi(\lambda)|^2\, d\nu_x(\lambda).
        \end{equation}
\end{prop}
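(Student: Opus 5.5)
The plan is to reduce to the compactly supported case of Proposition~\ref{prop:spectral-mes} by truncation, using the quantitative bound in~\ref{item:A1} to control the tail of $\nu_x$ uniformly in $x$.

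\emph{Step 1: a uniform polynomial bound on the spectral function.} For a.e.\ $x$ and every $\lambda\geq1$, set $t=1/\lambda$. Since $1_{[0,\lambda]}(\mu)\leq e\,e^{-2t\mu}$, applying \eqref{HS_L2_identity_app} with $\phi=1_{[0,\lambda]}$ and comparing with $\phi=e^{-t\cdot}1_{[0,n]}$, $n\to\infty$, gives
\[
E_x(\lambda)\leq e\int e^{-2t\mu}\,d\nu_x(\mu)=e\,\tr_{\V_x}(p_{2t}(x,x))\leq e\,r_\V C^2\lambda^{\alpha_1}.
\]
For the equality in the middle, I would use monotone convergence in $n$ on the right, and on the left the fact that the columns $K_{e^{-t\cdot}1_{[0,n]}(A)}(\cdot,x)$ converge in $L^2$ to $p_t(\cdot,x)$, together with \eqref{diag_trace_vs_L2}. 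For $\lambda\leq1$ one uses monotonicity. Hence $E_x(\lambda)\leq C'(1+\lambda)^{\alpha_1}$ with $C'$ independent of $x$; this is the estimate referred to later as \eqref{unif-bound-E}. A Stieltjes integration by parts then yields
\[
\int(1+\lambda)^{-\alpha_1-\varepsilon}\,d\nu_x\leq C''
\]
uniformly in $x$, and also uniform smallness of the tails $\int_{\lambda>n}(1+\lambda)^{-\alpha_1-\varepsilon}\,d\nu_x\to0$.

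\emph{Step 2: definition and boundedness of the kernel.} By \eqref{weighted-TV-bound} with $q=|\phi|$,
\[
\int|\phi|\,d|\Pi_{x,y}|_{\mathrm{HS}}\leq C\,C''
\]
$\Delta$-a.e. So the right side of \eqref{kernel_loc_weyl} is absolutely convergent and defines an element $K\in L^\infty_\Delta$. Set $\phi_n=\phi1_{[0,n]}$ and $K_n=K_{\phi_n(A)}$. By Step~1 and \eqref{weighted-TV-bound} applied to $|\phi-\phi_n|$, $K_n\to K$ uniformly $\Delta$-a.e.; the exceptional sets are countably many and can be unified.

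\emph{Step 3: identification and Plancherel.} For fixed $x$, \eqref{HS_L2_identity_app} gives
\[
\|K_n(x,\cdot)-K_m(x,\cdot)\|_{L^2}^2=\int|\phi_n-\phi_m|^2\,d\nu_x\to0,
\]
since $|\phi|^2\leq C^2(1+\lambda)^{-2\alpha_1-2\varepsilon}$ is $\nu_x$-integrable. So the columns converge in $L^2$, and their limit coincides a.e.\ with $K(x,\cdot)$ by the pointwise convergence of Step~2. Passing to the limit gives \eqref{L2rowHS}. For $f\in L^2$, $\phi_n(A)f\to\phi(A)f$ in $L^2$, while $(\phi_n(A)f)(x)=\int K_n(x,y)f(y)\,d\mu(y)\to\int K(x,y)f(y)\,d\mu(y)$ for a.e.\ $x$ by the column convergence and Cauchy--Schwarz. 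Extracting an a.e.\ convergent subsequence identifies $K$ as the kernel of $\phi(A)$.

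The main obstacle is Step~1, namely extracting the uniform-in-$x$ bound on $E_x$ (equivalently on $\int e^{-2t\lambda}\,d\nu_x$) from the operator-norm hypothesis~\ref{item:A1}. This requires the careful monotone limit identifying $\int e^{-2t\lambda}\,d\nu_x$ with $\tr p_{2t}(x,x)$ outside a single null set for countably many $t$, and then interpolating to all $\lambda$ by monotonicity.
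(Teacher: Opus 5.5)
Your proposal is correct and follows essentially the same route as the paper. The shared steps are: a uniform bound $E_x(\lambda)\leq C(1+\lambda)^{\alpha_1}$ obtained from \ref{item:A1} through the diagonal trace $\tr_{\V_x}(p_{2t}(x,x))$, the weighted total-variation bound \eqref{weighted-TV-bound} for absolute convergence in \eqref{kernel_loc_weyl}, truncation by $1_{[0,n]}$ with the Cauchy property from \eqref{HS_L2_identity_app}, and identification of the limit through an a.e.\ convergent subsequence. Your uniform $\Delta$-a.e.\ convergence $K_n\to K$ and your explicit handling of the null sets are slightly more careful than the paper's dominated-convergence phrasing. One cosmetic slip: with $t=1/\lambda$ one has $1_{[0,\lambda]}(\mu)\leq e^{2}e^{-2t\mu}$, not $e\,e^{-2t\mu}$, which only changes a constant.
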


\begin{proof}
We first show that the integral in \eqref{kernel_loc_weyl} is well defined for $\Delta$-a.e. $(x,y)\in X\times X$, i.e.
\begin{equation}\label{bound-kernel}
\Delta\text{-a.e. } (x,y)\in X\times X,\qquad \int_0^{+\infty}|\phi(\lambda)|\,d|\Pi_{x,y}|_{\mathrm{HS}}(\lambda)<+\infty.
\end{equation}

Applying \eqref{weighted-TV-bound} with $q(\lambda)=(1+\lambda)^{-\alpha_1-\varepsilon}$, we obtain
\[
\int_0^{+\infty}\frac{d|\Pi_{x,y}|_{\mathrm{HS}}(\lambda)}{(1+\lambda)^{\alpha_1+\varepsilon}}
\leq J_\varepsilon(x)^{1/2}J_\varepsilon(y)^{1/2},\qquad
J_\varepsilon(x):=\int_0^{+\infty}\frac{d\nu_x(\lambda)}{(1+\lambda)^{\alpha_1+\varepsilon}}.
\]
It therefore suffices to prove that $J_\varepsilon(x)$ is finite, uniformly for a.e.\ $x$. Since $\nu_x$ is the Stieltjes measure associated with $E_x(\lambda)=\nu_x([0,\lambda])$, Stieltjes integration by parts gives, with $p=\alpha_1+\varepsilon$,
\begin{equation*}
\int_{[0,+\infty)}(1+\lambda)^{-p}\,d\nu_x(\lambda)=p\int_0^{+\infty}\frac{E_x(\lambda)}{(1+\lambda)^{p+1}}\,d\lambda.
\end{equation*}
By the definition of the pointwise spectral measure, for a.e.\ $x\in X$, every $\lambda\geq0$ and every $t\in(0,1]$,
\begin{equation*}
E_x(\lambda)=\int_{[0,\lambda]}d\nu_x(\xi)
\leq e^{t\lambda}\int_{[0,\lambda]}e^{-t\xi}\,d\nu_x(\xi)
\leq e^{t\lambda}\mathrm{Tr}_{\V_x}(p_t(x,x)).
\end{equation*}

hence, thanks to \eqref{diag_trace_vs_L2} and the bound in Assumption~\ref{item:A1}, we have $E_x(\lambda) \leq C' e^{t\lambda} t^{-\alpha_1}$ (with $C'>0$ independent of $x$, $t$ and $\lambda$). Optimizing with $t=\alpha_1/\lambda$ for large $\lambda$ and enlarging the constant on bounded intervals, we obtain the uniform polynomial bound
\begin{equation}\label{unif-bound-E}
\forall\lambda\geq0,\qquad E_x(\lambda)\leq C_E(1+\lambda)^{\alpha_1}\quad\text{for a.e. }x\in X,
\end{equation}
with $C_E$ independent of $x$. This is the estimate used below; in particular, for the present $\varepsilon>0$, $E_x(\lambda)/(1+\lambda)^{\alpha_1+1+\varepsilon}\leq C_E(1+\lambda)^{-1-\varepsilon}$.
This proves \eqref{bound-kernel} and yields that
$K(x,y)=\int_0^{+\infty}\phi(\lambda)\,d\Pi_{x,y}(\lambda)$
is well-defined for $\Delta$-a.e.\ $(x,y)$ and belongs to $L^\infty_\Delta(X\times X;\Vbox)$.

To show that $K$ is the kernel of $\phi(A)$, define
$K_n(x,y)=\int_0^{+\infty}1_{[0,n]}(\lambda)\phi(\lambda)\,d\Pi_{x,y}(\lambda)$, $n\geq1$.
By Proposition~\ref{prop:spectral-mes}, $K_n$ is the kernel of $\phi(A)1_{[0,n]}(A)$. For $m>n$, the compactly supported identity \eqref{HS_L2_identity_app} gives, for a.e.\ $x$,
\[
\int_X\HSnorm{K_m(x,y)-K_n(x,y)}^2\,d\mu(y)
=\int_{(n,m]}|\phi(\lambda)|^2\,d\nu_x(\lambda).
\]
The right-hand side tends to zero by the polynomial bound on $E_x$ and the assumed decay of $\phi$. Hence $K_n(x,\cdot)$ is Cauchy in $L^2(X;\Hom(\V_x,\V))$. On the other hand, dominated convergence with respect to $|\Pi_{x,y}|_{\mathrm{HS}}$ gives $K_n(x,y)\to K(x,y)$ for $\Delta$-a.e.\ $(x,y)$; extracting an a.e.\ convergent subsequence from the $L^2$ convergence identifies the $L^2$ limit with $K(x,\cdot)$. Therefore, for every $f\in L^2(X;\V)$ and a.e.\ $x$,
\[
\int_XK_n(x,y)f(y)\,d\mu(y)\longrightarrow\int_XK(x,y)f(y)\,d\mu(y).
\]
Spectral calculus also gives $\phi(A)1_{[0,n]}(A)f\to\phi(A)f$ in $L^2(X;\V)$; after extraction of one subsequence, the two pointwise limits agree. Thus $K$ is the kernel of $\phi(A)$. Finally, passing to the limit in the preceding $L^2$ identity yields \eqref{L2rowHS}.

\end{proof}

\begin{remark}\label{rem:consistency_kernel}
The construction is consistent with the heat kernel: for every fixed $t>0$, spectral calculus gives, for $\Delta$-a.e.\ $(x,y)\in X\times X$,
    \begin{equation*}
        p_t(x,y) = \int_0^{+\infty} e^{-t\lambda}\, d\Pi_{x,y}(\lambda) = K_{e^{-tA}}(x,y).
    \end{equation*}
The exceptional set is allowed to depend on $t$; whenever a common representative for all $t>0$ is needed, it is obtained by restricting first to rational times and then using continuity, as in Corollary~\ref{cor:semigroup-column}.
\end{remark}

In the sequel we need to apply $A$ to the first variable of $K_{\phi(A)}$. We therefore use the spectral notation
\begin{equation*}
A K_{\phi(A)}:=K_{(\lambda\phi(\lambda))(A)}\in L^\infty_\Delta(X\times X;\Vbox).
\end{equation*}
The next corollary shows that this notation agrees, columnwise, with the action of the unbounded operator $A$ whenever $\phi$ decays sufficiently fast.

\begin{cor}\label{cor:A-on-kernel}
Let $\phi:[0,+\infty)\to\R$ be a measurable function such that there exist $C,\varepsilon>0$ satisfying
\begin{equation*}
\forall \lambda \geq 0, \quad |\phi(\lambda)| \leq \frac{C}{(1+\lambda)^{\alpha_1 + \varepsilon+1}}.
\end{equation*}
Then $\phi(A)$ and $A\phi(A)$ are integral operators by Proposition~\ref{prop:kernel-weyl}.
Furthermore, for almost every $x\in X$ and every $u\in\V_x$, the section
$y\longmapsto K_{\phi(A)}(y,x)u \in \V_y$
belongs to $D(A)$ and we have in $L^2(X;\V)$
\begin{equation}\label{A_on_column}
A\big(K_{\phi(A)}(\cdot,x)u\big)=K_{A\phi(A)}(\cdot,x)u.
\end{equation}
\end{cor}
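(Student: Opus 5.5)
The plan is to characterize membership in $D(A)$ via the graph of $A$, or more precisely via the spectral theorem: a vector $g\in L^2(X;\V)$ lies in $D(A)$ with $Ag=k$ as soon as $\langle g, A v\rangle=\langle k,v\rangle$ for all $v\in D(A)$, since $A$ is self-adjoint. So I fix $x$ outside the exceptional null sets of Proposition~\ref{prop:kernel-weyl} applied to both $\phi$ and $\psi(\lambda):=\lambda\phi(\lambda)$. Both satisfy the decay hypothesis of that proposition: $\phi$ trivially, and $\psi$ because $|\lambda\phi(\lambda)|\leq C(1+\lambda)^{-\alpha_1-\varepsilon}$. The columns $g:=K_{\phi(A)}(\cdot,x)u$ and $k:=K_{\psi(A)}(\cdot,x)u$ therefore lie in $L^2(X;\V)$ by \eqref{L2rowHS}.

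Rather than test against all of $D(A)$, I would use the truncations already appearing in the proof of Proposition~\ref{prop:kernel-weyl}. Set $\phi_n=1_{[0,n]}\phi$ and $\psi_n=1_{[0,n]}\psi=\lambda\phi_n$. These are bounded with compact support, so Proposition~\ref{prop:spectral-mes} represents their kernels via $\Pi$. The key identity to establish is that the column $g_n:=K_{\phi_n(A)}(\cdot,x)u$ lies in $D(A)$ with $Ag_n=K_{\psi_n(A)}(\cdot,x)u$. For this I use formula \eqref{Kstar_as_section}, with the roles of the variables swapped via the symmetry $K(y,x)=K(x,y)^*$, which holds because $\phi$ is real. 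It gives
\[
g_n=e^{tA/2}\phi_n(A)P_{t/2,x}u=\big(e^{t\lambda/2}\phi_n\big)(A)\,P_{t/2,x}u .
\]
Since $e^{t\lambda/2}\phi_n$ is bounded with compact support, $g_n$ is in the range of a compactly supported spectral multiplier, hence in $D(A)$. Spectral calculus then gives $Ag_n=(\lambda e^{t\lambda/2}\phi_n)(A)P_{t/2,x}u$, which by the same formula is exactly the column of $\psi_n(A)$. All of this needs only a single representative for fixed $t$, so countably many $n$ cost one null set.

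Next, let $n\to\infty$. By \eqref{HS_L2_identity_app} the differences satisfy $\|g_m-g_n\|^2\leq\int_{(n,m]}\phi^2\,d\nu_x$ and $\|Ag_m-Ag_n\|^2\leq\int_{(n,m]}\lambda^2\phi^2\,d\nu_x$. Both tails vanish thanks to the uniform polynomial bound \eqref{unif-bound-E} together with $|\lambda\phi|\lesssim(1+\lambda)^{-\alpha_1-\varepsilon}$, exactly as in the proof of Proposition~\ref{prop:kernel-weyl}. That same proof identifies the $L^2$ limits as $g$ and $k$: it established that the truncated columns converge in $L^2$ to $K_{\phi(A)}(\cdot,x)$, and the argument is identical for $\psi$. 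Since $A$ is closed, $g_n\to g$ together with $Ag_n\to k$ yields $g\in D(A)$ and $Ag=k$, which is \eqref{A_on_column}.

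The main obstacle is bookkeeping. One must make sure the column identity from \eqref{Kstar_as_section}, which was stated for rows $K(x,\cdot)^*$, applies to the column $K(\cdot,x)u$ with the same exceptional set. One must also check that the $L^2$ limit of the truncated columns coincides with the $\Delta$-a.e.\ defined kernel $K_{\phi(A)}(\cdot,x)$ for this fixed $x$, and not merely for a.e.\ $x$ along some subsequence. I would handle this by fixing $t=1$ once and for all, intersecting the countably many full-measure sets for $\phi_n$, $\psi_n$, $\phi$ and $\psi$, and invoking the symmetry from Proposition~\ref{prop:heat-kernel}\ref{it:sym-ker} on that common square.
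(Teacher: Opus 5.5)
Your proof is correct, but it takes a different route from the paper's.

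\textbf{The paper's route.} The paper never looks inside the kernels. It fixes a countable graph-norm-dense family $\{f_j\}\subset D(A)$. For each $j$ it uses only two facts: $\phi(A)$ and $A\phi(A)$ are integral operators (read in the dual, columnwise form), and $\phi(A)Af_j=A\phi(A)f_j$. This yields
\[
\langle K_{\phi(A)}(\cdot,x)u,Af_j\rangle=\langle K_{A\phi(A)}(\cdot,x)u,f_j\rangle
\]
off a countable union of null sets. Density in the graph norm and $D(A^*)=D(A)$ then conclude.

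\textbf{Your route.} You work constructively inside the pointwise spectral measure calculus. For the truncations $\phi_n=1_{[0,n]}\phi$, you identify the column explicitly as $(e^{t\lambda/2}\phi_n)(A)P_{t/2,x}u$, using \eqref{Kstar_as_section} and the symmetry $\Pi_{y,x}=\Pi_{x,y}^*$ on the fixed square. Because the multiplier is bounded with compact support, this column lies in $D(A)$, and functional calculus gives $A$ of it as the column of $(\lambda\phi_n)(A)$. You then pass to the limit using the pointwise Plancherel tails and the closedness of $A$. The limit is identified by $\Delta$-a.e.\ dominated convergence with respect to $|\Pi_{x,y}|_{\mathrm{HS}}$. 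For your fixed $x$, this gives convergence of the whole sequence $K_n(y,x)u\to K(y,x)u$ for $\mu$-a.e.\ $y$. Hence the $x$-dependent subsequence extracted from the $L^2$ convergence is harmless, and the bookkeeping worry you raise is resolved exactly as you suggest.

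\textbf{Comparison.} The paper's argument is shorter and is a general principle. It works for any multiplier for which $\phi(A)$ and $(\lambda\phi)(A)$ are integral operators with adjoint-symmetric kernels, however those kernels were constructed. Yours depends on the specific representative $\int\phi\,d\Pi_{x,y}$ and on the factorization through $P_{t/2,x}$. In exchange, it avoids choosing a dense subset of $D(A)$ and exhibits the column as an $L^2$-limit of spectrally localized vectors. The same closedness argument applied to $A^k$ would also give membership in $D(A^k)$ whenever $\int\lambda^{2k}\phi^2\,d\nu_x<+\infty$.
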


\begin{proof}
Let $H=L^2(X;\V)$.
Since $H$ is separable and $A$ is closed, $D(A)$ endowed with the graph norm is separable, hence there exists a countable dense subset $\{f_j\}_{j\ge1}\subset D(A)$.

Fix $j\ge1$.
Since $\phi(A)$ and $A\phi(A)$ are integral operators, there exists a $\mu$-null set $N_j\subset X$ such that for all $x\in X\setminus N_j$ and all $u\in\V_x$,
\begin{align*}
\langle (\phi(A)Af_j)(x),u\rangle_{\V_x}
&=\scalp{Af_j}{K_{\phi(A)}(\cdot,x)u}_{H}, \\
\langle (A\phi(A)f_j)(x),u\rangle_{\V_x}
&=\scalp{f_j}{K_{A\phi(A)}(\cdot,x)u}_{H}.
\end{align*}
By functional calculus, $\phi(A)$ is bounded on $H$ and commutes with $A$ on $D(A)$, hence $\phi(A)Af_j=A\phi(A)f_j$.
Therefore for $x\in X\setminus N_j$ and all $u\in\V_x$,
\begin{equation}\label{adjoint_graph_identity}
\scalp{K_{\phi(A)}(\cdot,x)u}{Af_j}_{H}=\scalp{K_{A\phi(A)}(\cdot,x)u}{f_j}_{H}.
\end{equation}
Now set $N=\bigcup_{j\ge1}N_j$ (still null), and fix $x\in X\setminus N$.
Fix $u\in\V_x$ and define $g=K_{\phi(A)}(\cdot,x)u\in H$ and $h=K_{A\phi(A)}(\cdot,x)u\in H$.
Then \eqref{adjoint_graph_identity} reads
$\forall j\ge1,\ \scalp{g}{Af_j}_{H}=\scalp{h}{f_j}_{H}$.
By density of $\{f_j\}$ in $D(A)$ for the graph norm, this implies
$\forall f\in D(A),\ \scalp{g}{Af}_{H}=\scalp{h}{f}_{H}$.
Since $A$ is self-adjoint, this exactly means $g\in D(A)$ and $Ag=h$, which is \eqref{A_on_column}.
\end{proof}

The same duality argument shows that the heat semigroup acts on the columns of such kernels as expected from spectral calculus; this is used in the proof of Theorem~\ref{thm:main_thm}.

\begin{cor}\label{cor:semigroup-column}
Let $\phi:[0,+\infty)\to\R$ be a measurable function such that there exist $C,\varepsilon>0$ with $|\phi(\lambda)|\leq C(1+\lambda)^{-\alpha_1-\varepsilon}$ for all $\lambda\geq0$. Then, for a.e.\ $x\in X$, every $u\in\V_x$ and every $t>0$,
\begin{equation}\label{semigroup_on_column}
e^{-tA}\big(K_{\phi(A)}(\cdot,x)u\big)=K_{(e^{-t\cdot}\phi)(A)}(\cdot,x)u\qquad\text{in }L^2(X;\V).
\end{equation}
\end{cor}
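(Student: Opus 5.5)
I will mimic the duality argument of Corollary~\ref{cor:A-on-kernel}, with $A$ replaced by the bounded self-adjoint operator $e^{-tA}$. First I treat a fixed $t>0$. The multiplier $\psi_t(\lambda):=e^{-t\lambda}\phi(\lambda)$ satisfies the same decay bound as $\phi$. Hence, by Proposition~\ref{prop:kernel-weyl}, both $\phi(A)$ and $\psi_t(A)=e^{-tA}\phi(A)$ are integral operators with kernels in $L^\infty_\Delta$. Moreover, for a.e.\ $x$ the columns $g_x u:=K_{\phi(A)}(\cdot,x)u$ and $h_{t,x}u:=K_{\psi_t(A)}(\cdot,x)u$ lie in $L^2(X;\V)$ by \eqref{L2rowHS}.

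Next I fix a countable dense set $\{f_j\}\subset L^2(X;\V)$. For each $j$ there is a null set $N_{j,t}$ outside of which the kernel representations of $\phi(A)e^{-tA}f_j$ and $\psi_t(A)f_j$ hold at $x$. Tested against $u\in\V_x$, and using the symmetry $K(x,y)^*=K(y,x)$ (the multipliers are real, so the kernels are $\Delta$-a.e.\ symmetric), these representations read
\[
\scalp{g_xu}{e^{-tA}f_j}=\langle(\phi(A)e^{-tA}f_j)(x),u\rangle=\langle(\psi_t(A)f_j)(x),u\rangle=\scalp{h_{t,x}u}{f_j}.
\]
Since $\phi(A)e^{-tA}=\psi_t(A)$, the middle equality holds. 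Because $e^{-tA}$ is self-adjoint and bounded, this gives $\scalp{e^{-tA}g_xu-h_{t,x}u}{f_j}=0$ for all $j$. By density, $e^{-tA}g_xu=h_{t,x}u$ for $x$ outside $\bigcup_j N_{j,t}$ and for all $u$, since the identity is linear in $u$ and $\V_x$ is finite-dimensional.

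To pass to all $t>0$ simultaneously, I intersect over $t\in\mathbb{Q}\cap(0,\infty)$, which leaves a single null set $N$. For $x\notin N$, both sides must then be continuous in $t$ in $L^2$. The left side $t\mapsto e^{-tA}g_xu$ is continuous by strong continuity of the semigroup. For the right side, \eqref{L2rowHS} applied to the difference of multipliers gives
\[
\|h_{t,x}u-h_{t',x}u\|^2\le|u|^2\int|e^{-t\lambda}-e^{-t'\lambda}|^2\phi(\lambda)^2\,d\nu_x(\lambda),
\]
which tends to $0$ as $t'\to t$ by dominated convergence, since $\phi^2\in L^1(\nu_x)$ by \eqref{unif-bound-E}. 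Note that the difference multiplier still satisfies the decay hypothesis, so its kernel is $K_{\psi_t(A)}-K_{\psi_{t'}(A)}$ by linearity of the integral in \eqref{kernel_loc_weyl}.

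The main obstacle is exactly this uncountable family of times. The resolution is to allow the exceptional set to depend only on rational $t$ and then extend by continuity. The subtle point is that $K_{\psi_t(A)}$ is only defined up to $t$-dependent null sets. To avoid this, I will use the representative given by the pointwise spectral measure formula \eqref{kernel_loc_weyl}, with $\Pi$ fixed once and for all, so that $h_{t,x}$ is defined for every $t$ on a common full-measure set.
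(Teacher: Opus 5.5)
Your proposal is correct and follows essentially the same route as the paper. Both arguments use duality against a countable dense family for rational $t$, the identity $\phi(A)e^{-tA}=(e^{-t\cdot}\phi)(A)$ with self-adjointness of $e^{-tA}$, and then extend to all $t>0$ by strong continuity on the left and \eqref{L2rowHS} with dominated convergence on the right; your fixing of the representative $\int e^{-t\lambda}\phi(\lambda)\,d\Pi_{x,y}(\lambda)$ with $\Pi$ chosen once and for all, so the right-hand side is defined for every $t$ on a common full-measure set, is a legitimate way to justify the continuity step that the paper leaves implicit.
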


\begin{proof}
For every $t>0$, the function $\lambda\mapsto e^{-t\lambda}\phi(\lambda)$ satisfies the same decay condition, so both sides of \eqref{semigroup_on_column} belong to $L^2(X;\V)$ by Proposition~\ref{prop:kernel-weyl}. Let $\{f_j\}_{j\geq1}$ be a countable dense subset of $L^2(X;\V)$, and fix $t\in\mathbb{Q}\cap(0,+\infty)$ and $j\geq1$. Applying the kernel identity of Proposition~\ref{prop:kernel-weyl} (in its dual, columnwise form, as in the proof of Corollary~\ref{cor:A-on-kernel}) to the function $e^{-tA}f_j$ for the multiplier $\phi$, and to the function $f_j$ for the multiplier $e^{-t\cdot}\phi$, we obtain a $\mu$-null set $N_{j,t}$ such that, for all $x\notin N_{j,t}$ and $u\in\V_x$,
\begin{align*}
& \scalp{e^{-tA}f_j}{K_{\phi(A)}(\cdot,x)u}_{L^2(X;\V)}=\big\langle(\phi(A)e^{-tA}f_j)(x),u\big\rangle_{\V_x}, \\
& \scalp{f_j}{K_{(e^{-t\cdot}\phi)(A)}(\cdot,x)u}_{L^2(X;\V)}=\big\langle((e^{-t\cdot}\phi)(A)f_j)(x),u\big\rangle_{\V_x}.
\end{align*}
Since $\phi(A)e^{-tA}=(e^{-t\cdot}\phi)(A)$ by spectral calculus and $e^{-tA}$ is self-adjoint, the two right-hand sides coincide, whence
\[
\scalp{f_j}{e^{-tA}\big(K_{\phi(A)}(\cdot,x)u\big)}_{L^2(X;\V)}=\scalp{f_j}{K_{(e^{-t\cdot}\phi)(A)}(\cdot,x)u}_{L^2(X;\V)}
\]
for all $j\geq1$ and all $x\notin N:=\bigcup_{j\geq1,\,t\in\mathbb{Q}_+}N_{j,t}$. By density of $\{f_j\}$, \eqref{semigroup_on_column} holds for all $t\in\mathbb{Q}_+$ and all $x\notin N$. Finally, both sides are continuous in $t\in(0,+\infty)$ with values in $L^2(X;\V)$ (the left-hand side by strong continuity of the semigroup, the right-hand side by \eqref{L2rowHS} and dominated convergence), so \eqref{semigroup_on_column} holds for all $t>0$.
\end{proof}

\subsection{Wave kernel and finite speed of propagation}\label{subsec:wave_kernel}

Here we define the (weak) wave kernel associated to the operator $A$ in our abstract setting, and show that it satisfies the finite speed of propagation property.
\medskip

\subsubsection{Wave kernel existence} For a fixed $s\in\R$, the wave operator $\cos(s\sqrt A)$ need not have an integral kernel in the preceding sense, since the multiplier $\lambda\mapsto\cos(s\sqrt\lambda)$ does not decay at infinity. We therefore introduce a distributional kernel in the time variable.
For example, on a compact Riemannian manifold $(M,g)$, considering the Hodge Laplacian $\Delta_k$ acting on differential $k$-forms, the wave operator has a distributional kernel $K_{\cos(s\sqrt{\Delta_k})}$ given by
\begin{equation*}
K_{\cos(s\sqrt{\Delta_k})}(x,y) = \sum_{n=0}^{+\infty} \cos(s\sqrt{\lambda_n}) \varphi_n(x) \otimes \varphi_n(y)^{\flat},
\end{equation*}
where the equality holds in the sense of $\Lambda^kT^*M \boxtimes (\Lambda^kT^*M)^*$-valued distributions on $M \times M$. Notice that another way to see this kernel is to define $w : M \times M \to \mathcal{S}'(\R) \otimes (\Lambda^kT^*M \boxtimes (\Lambda^kT^*M)^*)$ as follows
\begin{equation*}
    \forall \psi \in \mathcal{S}(\R), \quad \scalp{w(x,y)}{\psi} = \sum_{n=0}^{+\infty} \left(\int_{\R} \psi(s) \cos(s\sqrt{\lambda_n}) ds\right) \varphi_n(x) \otimes \varphi_n(y)^{\flat},
\end{equation*}
where $\scalp{\cdot}{\cdot}$ denotes the duality bracket and in the following all the duality brackets will be denoted similarly to lighten the notation.
Let $f,g \in C^\infty(M;\Lambda^kT^*M)$ and $\psi \in \mathcal{S}(\R)$, then the link between $w$ and the usual wave kernel is given by
\begin{equation*}
    \scalp{\scalp{w}{\psi}}{f\otimes g}=\scalp{\scalp{K_{\cos(\cdot\sqrt{\Delta_k})}}{f\otimes g}}{\psi}.
\end{equation*}
The advantage of the second point of view is that it can be generalized to our abstract setting.

Now, we define a bounded operator $\mathcal{C} : \mathcal{S}(\R) \rightarrow C_b([0,+\infty))$ as follows
\begin{equation}\label{def_Cpsi}
   \forall \lambda\geq 0, \quad \mathcal{C}_\psi(\lambda) = \int_{\R} \psi(s) \cos(s\sqrt{\lambda}) ds.
\end{equation}
Thanks to repeated integration by parts, we have for all $\psi \in \W$ and all $N \geq 0$,
$\mathcal{C}_\psi(\lambda) = O\left(\lambda^{-N}\right)$,
when $\lambda \to +\infty$. We are now able to define the weak wave kernel in our abstract setting.

\begin{prop}\label{prop:weak-wave}
There exists a unique $w \in L^\infty_\Delta\big(X\times X;\W'\otimes (\Vbox)\big)$,
such that for all $\psi \in \W$, for $\Delta$-a.e.\ $(x,y)\in X\times X$,
\begin{equation}\label{weak-wave-ker-def}
\langle w(x,y),\psi\rangle = K_{\mathcal{C}_\psi(A)}(x,y)=\int_0^{+\infty}\mathcal{C}_\psi(\lambda)\,d\Pi_{x,y}(\lambda)\in \Hom(\V_y,\V_x).
\end{equation}
Moreover it satisfies the weak wave equation
\begin{equation}\label{weak_wave_equation}
\forall \psi\in \W,\qquad
A\langle w,\psi\rangle=-\langle w,\psi''\rangle\qquad \Delta\text{-a.e.}
\end{equation}
\end{prop}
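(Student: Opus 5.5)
The plan is to define $w$ through its pairings on a countable dense subset of $\W$ and then extend by continuity. The extension step uses a uniform bound of the form $\HSnorm{K_{\mathcal{C}_\psi(A)}(x,y)}\leq C_m(\|\psi\|_{L^1}+\|\psi^{(2m)}\|_{L^1})$ with $m>\alpha_1$.

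\emph{Step 1: the a priori bound.} Repeated integration by parts gives $\lambda^m\mathcal{C}_\psi(\lambda)=(-1)^m\mathcal{C}_{\psi^{(2m)}}(\lambda)$ for $\lambda>0$. Hence
\[
|\mathcal{C}_\psi(\lambda)|\leq 2^m(1+\lambda)^{-m}\bigl(\|\psi\|_{L^1}+\|\psi^{(2m)}\|_{L^1}\bigr).
\]
Since $m>\alpha_1$, this decay is enough to apply Proposition~\ref{prop:kernel-weyl}. So $\mathcal{C}_\psi(A)$ has kernel $\int\mathcal{C}_\psi\,d\Pi_{x,y}$. Combining \eqref{weighted-TV-bound} with the uniform estimate \eqref{unif-bound-E} (through $J_\varepsilon$), we obtain a $\mu$-null set $N_0$, independent of $\psi$, such that on $(X\setminus N_0)^2$
\[
\int(1+\lambda)^{-m}\,d|\Pi_{x,y}|_{\mathrm{HS}}\leq C.
\]
This yields the displayed bound pointwise on that square, for every $\psi$ simultaneously. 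The set $N_0$ comes only from the construction of $\Pi$ and from countably many bounds on $E_x$.

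\emph{Step 2: construction.} On $(X\setminus N_0)^2$ we define $\langle w(x,y),\psi\rangle:=\int\mathcal{C}_\psi(\lambda)\,d\Pi_{x,y}(\lambda)$ directly. This is well defined for every $\psi$ because $|\mathcal{C}_\psi|\leq C_\psi(1+\lambda)^{-m}$ is $|\Pi_{x,y}|$-integrable. It is linear in $\psi$, and by Step~1 it is continuous for a Schwartz seminorm, so $w(x,y)\in\W'\otimes\Hom(\V_y,\V_x)$. Measurability in $(x,y)$ of $\langle w,\psi\rangle$ follows from Proposition~\ref{prop:kernel-weyl}, and $\Delta$-boundedness follows from Step~1. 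This gives $w\in L^\infty_\Delta(X\times X;\W'\otimes(\Vbox))$ satisfying \eqref{weak-wave-ker-def}. Uniqueness is immediate from the definition of weak-$*$ $\Delta$-a.e. equality; Lemma~\ref{lem:delta-ae} applies because $\W$ is separable.

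\emph{Step 3: the weak wave equation.} Integrating by parts twice in $s$ gives $\mathcal{C}_{\psi''}(\lambda)=-\lambda\,\mathcal{C}_\psi(\lambda)$. Therefore
\[
-\langle w,\psi''\rangle=K_{(\lambda\mathcal{C}_\psi)(A)}=AK_{\mathcal{C}_\psi(A)}
\]
in the spectral notation. Since $\mathcal{C}_\psi$ decays faster than $(1+\lambda)^{-\alpha_1-1-\varepsilon}$, Corollary~\ref{cor:A-on-kernel} shows that this notation coincides columnwise with the action of the unbounded operator $A$.

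The main obstacle is Step~1, specifically making the exceptional null set independent of $\psi$. Without that, one would have uncountably many exceptional sets. If it is not available, the fallback is to define $w$ on a countable dense family $\{\psi_k\}$ and extend using the seminorm bound. The uniform-in-$x$ estimate \eqref{unif-bound-E} is exactly what makes either route work.
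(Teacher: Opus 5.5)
Your proposal is correct and follows essentially the paper's route. You use the integration-by-parts decay bound on $\mathcal{C}_\psi$ with an integer $m>\alpha_1$, then combine \eqref{weighted-TV-bound} with the uniform estimate \eqref{unif-bound-E} to get continuity on $\W$ uniformly in the $L^\infty_\Delta$ sense, and derive the weak wave equation from $\mathcal{C}_{\psi''}=-\lambda\,\mathcal{C}_\psi$ via spectral calculus. Your explicit single $\psi$-independent null set $N_0$ (from the fixed square on which $\Pi$ is built and one a.e.\ bound on $J_m$) makes precise a point the paper leaves implicit, so the countable-dense fallback you mention is not needed.
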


\begin{proof}
Let $\psi\in \W$.
Since $\mathcal{C}_\psi(\lambda)=O((1+\lambda)^{-N})$ for all $N$, the function $\mathcal{C}_\psi$ satisfies the decay condition of Proposition~\ref{prop:kernel-weyl}, hence $\mathcal{C}_\psi(A)$ is an integral operator with kernel $K_{\mathcal{C}_\psi(A)}\in L^\infty_\Delta(X\times X;\Vbox)$.
This defines $\langle w,\psi\rangle$ by \eqref{weak-wave-ker-def}.

We next verify that this prescription defines a continuous $\W'$-valued kernel, uniformly in $(x,y)$ in the $L^\infty_\Delta$ sense. For each $k\in\N$, repeated integration by parts in \eqref{def_Cpsi} gives
\begin{equation*}
\sup_{\lambda\ge0}(1+\lambda)^k|\mathcal{C}_\psi(\lambda)| \leq C_k\big(\|\psi\|_{L^1(\R)}+\|\psi^{(2k)}\|_{L^1(\R)}\big).
\end{equation*}
Choose an integer $k>\alpha_1$ and set
$J_k(x):=\int_0^{+\infty}(1+\lambda)^{-k}\,d\nu_x(\lambda)$.
The uniform polynomial estimate \eqref{unif-bound-E}, followed by Stieltjes integration by parts, gives
\[
\mathrm{ess\,sup}_{x\in X}J_k(x)<+\infty.
\]
Applying \eqref{weighted-TV-bound} with $q(\lambda)=(1+\lambda)^{-k}$ therefore yields, for $\Delta$-a.e.\ $(x,y)$,
\begin{multline}
\HSnorm{\langle w(x,y),\psi\rangle}
\leq C_k\big(\|\psi\|_{L^1(\R)}+\|\psi^{(2k)}\|_{L^1(\R)}\big)
\int_0^{+\infty}(1+\lambda)^{-k}\,d|\Pi_{x,y}|_{\mathrm{HS}}(\lambda) \\
\leq C_k'\big(\|\psi\|_{L^1(\R)}+\|\psi^{(2k)}\|_{L^1(\R)}\big).   \label{Cpsi_bound_by_pk}
\end{multline}
The two $L^1$ norms are controlled by standard Schwartz seminorms, for instance by $\|\langle\cdot\rangle^2\psi\|_\infty$ and $\|\langle\cdot\rangle^2\psi^{(2k)}\|_\infty$. Hence $\psi\mapsto\langle w(x,y),\psi\rangle$ is continuous on $\W$, with a bound uniform in the $L^\infty_\Delta$ sense. This proves that $w\in L^\infty_\Delta(X\times X;\W'\otimes\Vbox)$. Uniqueness is immediate from \eqref{weak-wave-ker-def}, since the value of $w$ on every $\psi\in\W$ is prescribed.

Now, for $\lambda\ge0$, integration by parts gives
\begin{equation*}
-\mathcal{C}_{\psi''}(\lambda)=-\int_\R \psi''(s)\cos(s\sqrt{\lambda})\,ds
=\lambda\int_\R \psi(s)\cos(s\sqrt{\lambda})\,ds=\lambda\,\mathcal{C}_\psi(\lambda).
\end{equation*}
Thus, by spectral calculus, $A\mathcal{C}_\psi(A)=-\mathcal{C}_{\psi''}(A)$,
and therefore their kernels satisfy \eqref{weak_wave_equation}.
\end{proof}

\subsubsection{Transmutation formula and finite speed of propagation}
We now establish the Kannai transmutation formula relating the heat and wave kernels; see \cite{Hersh1975MethodTransmutations,Kannai1977OffDiagonal,Romanoff1947OneParameterI,Ungar1971IntegralTransformPrelim} for historical references.

For all $t>0$, we define the function
$\gamma_t(s) = \frac{1}{\sqrt{4\pi t}} e^{-\frac{s^2}{4t}}$,
which is the usual Gaussian heat kernel on the real line. We are now able to state and prove the transmutation formula.

\begin{lem}[Transmutation formula]\label{lem:transmutation}
For every fixed $t>0$, one has
$p_t(x,y)=\langle w(x,y),\gamma_t\rangle$ for $\Delta$-a.e.~$(x,y)\in X\times X$.
\end{lem}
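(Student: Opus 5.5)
The plan is to reduce the statement to a scalar identity for multipliers and then apply the kernel calculus of Proposition~\ref{prop:kernel-weyl} and Proposition~\ref{prop:weak-wave}.

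The first step is to compute $\mathcal{C}_{\gamma_t}$. By \eqref{def_cpsi}, and using the Gaussian Fourier transform already recalled in the proof of Lemma~\ref{resummed-transmutation}, namely $\int_\R e^{-s^2/(4t)}\cos(s\omega)\,ds=\sqrt{4\pi t}\,e^{-t\omega^2}$ with $\omega=\sqrt\lambda$, we get
\[
\mathcal{C}_{\gamma_t}(\lambda)=e^{-t\lambda}\qquad\text{for every }\lambda\geq0 .
\]
Since $\gamma_t\in\W$, Proposition~\ref{prop:weak-wave} gives, for $\Delta$-a.e.\ $(x,y)$,
\[
\langle w(x,y),\gamma_t\rangle=K_{\mathcal{C}_{\gamma_t}(A)}(x,y)=\int_0^{+\infty}e^{-t\lambda}\,d\Pi_{x,y}(\lambda).
\]
By Remark~\ref{rem:consistency_kernel}, the right-hand side equals $p_t(x,y)$ $\Delta$-a.e.

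For this remark to be applicable, the consistency it records must itself be justified. The function $\lambda\mapsto e^{-t\lambda}$ decays faster than $(1+\lambda)^{-\alpha_1-\varepsilon}$. Proposition~\ref{prop:kernel-weyl} therefore shows that $\int e^{-t\lambda}\,d\Pi_{x,y}$ is a kernel of $e^{-tA}$ lying in $L^\infty_\Delta$. Proposition~\ref{prop:heat-kernel}\ref{it:heat-ker} shows that $p_t$ is also such a kernel.

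The main obstacle is the uniqueness step. Two kernels of the same operator agree only $\mu\otimes\mu$-a.e., which is weaker than agreeing $\Delta$-a.e. I would upgrade this as in the uniqueness part of Proposition~\ref{prop:heat-kernel}, in three moves.
\begin{enumerate}
\item By separability, the two kernels have the same rows $K(x,\cdot)$ for $x$ outside a null set.
\item The kernel $\int e^{-t\lambda}\,d\Pi_{x,y}$ satisfies the semigroup factorization $\int K_{t/2}(x,z)K_{t/2}(z,y)\,d\mu(z)$ at each point of a full square. This follows from the defining formula \eqref{prop-specmes}, which gives $P_{t/2,x}^*e^{-tA}e^{tA}P_{t/2,y}=P_{t/2,x}^*P_{t/2,y}$; here one truncates to $1_{[0,n]}$ and lets $n\to\infty$ using \eqref{HS_L2_identity_app}.
\item The same factorization holds for $p_t$, by construction. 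Since the two factorizations coincide pointwise on $(X\setminus N)^2$, the two kernels agree $\Delta$-a.e.
\end{enumerate}
In fact, the direct identity $\int e^{-t\lambda}\,d\Pi_{x,y}=P_{t/2,x}^*P_{t/2,y}=p_t(x,y)$ holds on the fixed full-measure square: it follows from the truncation limit, by monotone convergence on the diagonal and dominated convergence off it, controlled by \eqref{TV_bound_bundle}. I would use this route, since it avoids row arguments altogether.
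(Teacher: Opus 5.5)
Your argument is correct and is essentially the paper's proof: compute $\mathcal{C}_{\gamma_t}(\lambda)=e^{-t\lambda}$ from the Gaussian Fourier transform, then combine the definition of $w$ with Remark~\ref{rem:consistency_kernel}. Your extra justification of that remark is sound. It uses the truncations $1_{[0,n]}(\lambda)e^{-t\lambda}$, the convergence $P_{t/2,x}^*1_{[0,n]}(A)P_{t/2,y}\to P_{t/2,x}^*P_{t/2,y}=p_t(x,y)$ on the fixed full-measure square, and dominated convergence controlled by \eqref{weighted-TV-bound}, and it makes explicit the $\Delta$-a.e.\ (rather than merely $\mu\otimes\mu$-a.e.) identification that the paper attributes to ``spectral calculus''.
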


\begin{proof}
    From a Fourier transform computation, we have for all $\lambda \geq 0$ and $t>0$,
    $e^{-t\lambda}  = \mathcal{C}_{\gamma_t}(\lambda)$.
    Thus by Remark~\ref{rem:consistency_kernel} and the definition of $w$, we have for $\Delta$-a.e. $(x,y) \in X \times X,$
        $p_t(x,y) = K_{e^{-tA}}(x,y) = K_{\mathcal{C}_{\gamma_t}(A)}(x,y) = \scalp{w(x,y)}{\gamma_t}$.
\end{proof}

Now, we are able to state and prove the finite speed of propagation property for the wave kernel $w$. It is a direct consequence of Assumption~\ref{item:A2} thanks to the Paley-Wiener theorem. In fact there is an equivalence between Assumption~\ref{item:A2} and the finite speed of propagation property, for more details on this, see \cite{sikora2004riesz}. For all $\delta >0$, set
$\mathcal{D}_\delta =  \{(x,y) \in X \times X \ | \ d(x,y) \leq \delta \}$.
For $K \in L^\infty_\Delta(X \times X;\Vbox)$, we define its support as the essential support (for $\mu\otimes\mu$) of the scalar function $(x,y)\mapsto \HSnorm{K(x,y)}$, and we denote it by $\supp \, K$.
It can be checked that $\supp \, K \subset \mathcal{D}_\delta$ is equivalent to (see \cite{sikora2004riesz}):
for all $f,g \in C_c(X;\V)$ with $\supp \, f \subset B(x,r)$ and $\supp \, g \subset B(y,r')$ such that $d(x,y) > r + r' + \delta$, we have
\begin{equation}\label{charac-supp-ker}
\int_X\int_X \langle K(x',y')f(y'),g(x')\rangle_{\V_{x'}}\,d\mu(x')\,d\mu(y')=0.
\end{equation}
Now we can state and prove the finite speed of propagation property for $w$.

\begin{prop}[Finite speed of propagation of $w$]\label{prop:fsp_wave_ker}
Let $\delta >0$ and $\psi \in C^\infty(\R)$ with $\supp \, \psi \subset [-\delta,\delta]$, then
$\supp \, \langle w,\psi\rangle \subset \mathcal{D}_\delta$.
\end{prop}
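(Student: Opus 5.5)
The plan is to derive finite propagation for the operator family $\cos(s\sqrt A)$ from~\ref{item:A2}, and then integrate against $\psi$. Everything will be checked through the characterization~\eqref{charac-supp-ker}. Fix $f,g\in C_c(X;\V)$ with $\supp f\subset B(x,r)$, $\supp g\subset B(y,r')$ and $d(x,y)>r+r'+\delta$. Set $E=\supp f$, $F=\supp g$ and $\rho:=d(E,F)$. Since $\rho\geq d(x,y)-r-r'>\delta$, it suffices to show the following two facts:
\[
\Phi(s):=\scalp{\cos(s\sqrt A)f}{g}_{L^2(X;\V)}=0\quad\text{for }|s|<\rho,
\qquad
\int_X\int_X\langle K_{\mathcal{C}_\psi(A)}(x',y')f(y'),g(x')\rangle\,d\mu\,d\mu=\int_\R\psi(s)\Phi(s)\,ds .
\]
Then the right-hand side vanishes because $\supp\psi\subset[-\delta,\delta]\subset(-\rho,\rho)$. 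The second identity is routine: $\mathcal{C}_\psi(A)$ is an integral operator by Proposition~\ref{prop:weak-wave}, its kernel is $\scalp{w}{\psi}$, and $\mathcal{C}_\psi(A)=\int\psi(s)\cos(s\sqrt A)\,ds$ holds weakly by the spectral theorem and Fubini, since $\Phi$ is bounded and continuous and $\psi\in L^1$.

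The core step is the vanishing of $\Phi$ on $(-\rho,\rho)$; here I follow Sikora's Paley--Wiener argument. First, the operator-level transmutation $e^{-tA}=\int_\R\gamma_t(s)\cos(s\sqrt A)\,ds$, which follows from $\mathcal{C}_{\gamma_t}(\lambda)=e^{-t\lambda}$ and the spectral theorem, gives
\[
\scalp{e^{-tA}f}{g}=\frac{1}{\sqrt{4\pi t}}\int_\R e^{-s^2/(4t)}\Phi(s)\,ds=\frac{1}{\sqrt{4\pi t}}\int_0^{+\infty}e^{-\sigma/(4t)}\,\Phi(\sqrt\sigma)\,\sigma^{-1/2}\,d\sigma,
\]
using that $\Phi$ is even. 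Thus, with $z=1/(4t)$, the function $F(z):=\sqrt{\pi/z}\,\scalp{e^{-A/(4z)}f}{g}$ is the Laplace transform of the locally integrable, polynomially tempered function $m(\sigma)=\Phi(\sqrt\sigma)\sigma^{-1/2}$. This representation holds first for $z>0$ and then extends holomorphically to $\mathrm{Re}\,z>0$.

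Next I would obtain exponential decay of $F$ in the whole half plane. Assumption~\ref{item:A2} gives $|F(z)|\leq C z^{-1/2}e^{-\rho^2 z}\|f\|\|g\|$ for real $z>0$. Moreover, $e^{-\zeta A}$ is a contraction for $\mathrm{Re}\,\zeta\ge0$, so $|\scalp{e^{-A/(4z)}f}{g}|\le\|f\|\|g\|$ throughout $\mathrm{Re}\,z>0$. Apply Phragmén--Lindelöf to $G(z)=e^{\rho^2 z}\scalp{e^{-A/(4z)}f}{g}$ on each of the two quarter planes $\{\mathrm{Re}\,z>0,\ \pm\mathrm{Im}\,z>0\}$. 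On the real axis $G$ is bounded by~\ref{item:A2}; on the imaginary axis $|G|=|\scalp{e^{-A/(4z)}f}{g}|\le\|f\|\|g\|$; and inside, $G$ has at most exponential growth. Hence $|G|\le\|f\|\|g\|$ on the whole right half plane, that is, $|F(z)|\lesssim|z|^{-1/2}e^{-\rho^2\mathrm{Re}\,z}$ there.

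Finally, the Paley--Wiener/Laplace inversion step concludes. The function $e^{\rho^2 z}F(z)$ is the Laplace transform of $m(\cdot+\rho^2)$, a tempered function on $[-\rho^2,\infty)$. The bound just proved says this transform is bounded by $|z|^{-1/2}$ on the right half plane. By the Paley--Wiener--Schwartz theorem for one-sided Laplace transforms, or equivalently by an explicit Bromwich inversion along $\mathrm{Re}\,z=c$ and letting $c\to+\infty$, it follows that $m$ vanishes on $(0,\rho^2)$. Hence $\Phi=0$ on $[0,\rho)$, and by evenness on $(-\rho,\rho)$. I expect the main obstacle to be this Phragmén--Lindelöf/inversion step. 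One must check the a priori growth needed to apply Phragmén--Lindelöf near $z=0$, where $e^{-A/(4z)}$ degenerates but stays a contraction. One must also handle the fact that the $z^{-1/2}$ factor is not integrable on vertical lines. I would first try to fix this by replacing $F$ with $F(z)/(1+z)$, which is integrable on vertical lines and still tends to zero as $c\to\infty$, so that the inversion integral gives $m(\sigma)=0$ for $\sigma<\rho^2$ directly.
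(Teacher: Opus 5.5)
Your proposal is correct and has the same structure as the paper's proof. Both show that $\langle\cos(s\sqrt A)f,g\rangle=0$ for $|s|<d(\supp f,\supp g)$, integrate against $\psi$ by spectral calculus to get $\langle\mathcal{C}_\psi(A)f,g\rangle=0$, and conclude via the kernel representation and \eqref{charac-supp-ker}. The only difference is that the paper cites Sikora for the Davies--Gaffney $\Rightarrow$ finite-propagation step, whereas you reprove it via Phragm\'en--Lindel\"of in $z=1/(4t)$ and Laplace inversion. For the first part, boundedness of $G$ near the exceptional boundary point $z=0$ suffices. The inversion also works, with one small addition: after dividing by $1+z$, the Bromwich argument only shows that $N:=m(\cdot+\rho^2)*\big(e^{-u}1_{u>0}\big)$ vanishes on $(-\infty,0)$, and you need the extra line $m(\cdot+\rho^2)=N'+N$ to conclude $m=0$ on $(0,\rho^2)$.
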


\begin{proof}
Let $\delta>0$ and $\psi\in C^\infty(\R)$ with $\supp\, \psi\subset[-\delta,\delta]$.
Let $f,g\in C_c(X;\V)$ with $\supp\, f\subset B(x,r)$ and $\supp\, g\subset B(y,r')$ such that $d(x,y)>r+r'+\delta$.
Then for all $s\in[-\delta,\delta]$, Assumption~\ref{item:A2} implies (via the standard Paley--Wiener argument, see \cite{sikora2004riesz}) that
$\scalp{\cos(s\sqrt{A})f}{g}_{L^2(X;\V)}=0$.
By spectral calculus,
\[
\scalp{\mathcal{C}_\psi(A)f}{g}_{L^2(X;\V)}
=\int_\R \psi(s)\,\scalp{\cos(s\sqrt{A})f}{g}_{L^2(X;\V)}\,ds = 0.
\]
Hence, using the kernel representation of $\mathcal{C}_\psi(A)$,
\begin{equation*}
\int_X\int_X \langle K_{\mathcal{C}_\psi(A)}(x',y')f(y'),g(x')\rangle_{\V_{x'}}\,d\mu(x')\,d\mu(y')=\scalp{\mathcal{C}_\psi(A)f}{g}_{L^2(X;\V)}=0.
\end{equation*}
Using \eqref{charac-supp-ker}, we deduce $\supp\, K_{\mathcal{C}_\psi(A)}\subset\mathcal{D}_\delta$, and since $\langle w,\psi\rangle=K_{\mathcal{C}_\psi(A)}$ by definition, this concludes.
\end{proof}

\begin{remark}\label{rem:finite-speed}
The preceding operator-kernel support statement has the following pointwise consequence: there exists a set $\Omega\subset X\times X$ of full $\mu\otimes\mu$ measure such that, for every $(x,y)\in\Omega$,
\begin{equation}\label{pointwise-wave-support}
\supp w(x,y)\subset\R\setminus(-d(x,y),d(x,y)).
\end{equation}
Indeed, take a rational $\delta>0$ and a countable dense family of test functions in $C_c^\infty((-\delta,\delta))$. Proposition~\ref{prop:fsp_wave_ker}, followed by a countable union of exceptional sets and continuity of $w(x,y)$ on $\W$, shows that $\langle w(x,y),\psi\rangle=0$ whenever $\supp\psi\subset(-\delta,\delta)$ and $d(x,y)>\delta$. Letting rational $\delta\uparrow d(x,y)$ proves \eqref{pointwise-wave-support}. Notice that this conclusion is asserted for ordinary product-a.e.\ pairs; this is what is needed for the off-diagonal integration in Lemma~\ref{resummed-omega}. The $L^\infty_\Delta$ formalism remains essential for diagonal spectral identities, where product-a.e.\ representatives are insufficient. Geometrically, \eqref{pointwise-wave-support} says that a wave travelling at unit speed cannot reach $y$ from $x$ before time $|s|=d(x,y)$.
\end{remark}

\section{Auxiliary estimates}\label{app:tech-lem}
In this Appendix we gather some technical lemmas that we used in the main proofs.

\subsection{Compactly supported distribution estimates}
\begin{lem}\label{lem:localize_weighted_sup_norms}
Let $r>0$ and $m\in\N^*$, and set
$K=\R\setminus (-r,r)=(-\infty,-r]\cup [r,\infty)$.
Let $w\in\mathcal{S}'(\R)$ be a tempered distribution with $\supp \, w\subset K$ and assume that there exists $C>0$ such that
\begin{equation}\label{global_est_w}
\forall \psi\in\mathcal{S}(\R),\qquad
|\langle w,\psi\rangle|
\le C\Big(\sup_{x\in\R}|(1+x^2)\psi(x)|+\sup_{x\in\R}|(1+x^2)\psi^{(2m)}(x)|\Big).
\end{equation}
Then there exists a constant $C_{m}>0$ such that for all $\psi\in\mathcal{S}(\R)$,
\begin{equation}\label{localized_est_w}
|\langle w,\psi\rangle|
\le C_{m}(1+r^{-2m})\Big(\sup_{x\in K}|(1+x^2)\psi(x)|+\sup_{x\in K}|(1+x^2)\psi^{(2m)}(x)|\Big).
\end{equation}
\end{lem}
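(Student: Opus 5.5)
The plan is to localize $\psi$ with a cutoff adapted to the scale $r$. We fix once and for all an even $\theta\in C^\infty(\R)$ with $0\le\theta\le1$, $\theta=0$ on $[-1/2,1/2]$ and $\theta=1$ outside $(-1,1)$, and set $\theta_r(x)=\theta(x/r)$. Then $1-\theta_r$ is smooth and supported in $[-r,r]$.

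We would like to replace $\psi$ by $\theta_r\psi$, but $1-\theta_r$ equals $1$ near the origin and is not supported in $(-r,r)$. We therefore cannot claim $\langle w,(1-\theta_r)\psi\rangle=0$ from $\supp w\subset K$ directly, since $(1-\theta_r)\psi$ need not vanish near $\pm r$. The fix is to use a slightly rescaled cutoff. For $\epsilon\in(0,1)$ we take $\theta_{r,\epsilon}(x)=\theta(x/((1-\epsilon)r))$, which equals $1$ on a neighbourhood of $K$. Then $(1-\theta_{r,\epsilon})\psi$ has compact support inside the open set $(-r,r)$, which is disjoint from $\supp w$, so $\langle w,\psi\rangle=\langle w,\theta_{r,\epsilon}\psi\rangle$. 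Applying \eqref{global_est_w} to $\theta_{r,\epsilon}\psi\in\mathcal S(\R)$ gives
\[
|\langle w,\psi\rangle|\le C\Big(\sup|(1+x^2)\theta_{r,\epsilon}\psi|+\sup|(1+x^2)(\theta_{r,\epsilon}\psi)^{(2m)}|\Big).
\]
Since the final bound should not depend on $\epsilon$, we will take $\epsilon=1/2$, say. Then the cutoff is supported in $\{|x|\ge r/4\}$, and the remaining issue is to bound sup norms over $\{|x|\ge r/4\}$ by sup norms over $K$.

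The simple variant fails here, because values of $\psi$ on $[r/4,r)$ are not controlled by values on $K$. So the cutoff must instead live where the data are controlled, and the correct approach is extension. We will replace $\psi$ by a function $\tilde\psi\in\mathcal S(\R)$ that agrees with $\psi$ on a neighbourhood of $K$ (so $\langle w,\psi\rangle=\langle w,\tilde\psi\rangle$) and whose weighted norms are bounded by those of $\psi$ on $K$. The construction is a Seeley-type (or Taylor-polynomial) extension of $\psi|_{[r,\infty)}$ across $x=r$ into $(0,r)$, glued via the cutoff to zero near $0$, and the same on the negative side. Writing $\tilde\psi$ in $(r/2,r)$ as a Seeley reflection $\sum_j a_j\psi(r+b_j(r-x))$, with coefficients chosen so that derivatives up to order $2m$ match at $r$, expresses $\tilde\psi$ and $\tilde\psi^{(2m)}$ on $(0,r)$ through $\psi^{(k)}$ on $[r,\infty)$ for $k\le 2m$. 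Multiplying by the cutoff produces factors $r^{-j}\|\theta^{(j)}\|_\infty$ for $j\le2m$. The weights $1+x^2$ are comparable on the relevant regions up to constants, since the reflection sends $x\in(0,r)$ to points in $[r,(1+b_j)r]$ where $1+x^2\ge 1$.

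The main obstacle will be that this extension involves the intermediate derivatives $\psi^{(k)}$, $0<k<2m$, on $K$, whereas the right-hand side only contains $\psi$ and $\psi^{(2m)}$. We will handle this with a Landau–Kolmogorov interpolation inequality on the half-lines $[r,\infty)$ and $(-\infty,-r]$. On a half-line, $\sup|g^{(k)}|\lesssim \sup|g|+\sup|g^{(2m)}|$ (Kolmogorov–Landau on a semi-infinite interval), applied to $g=(1+x^2)\psi$, with the commutator terms from the weight absorbed inductively. Scaling down to intervals of length $r$ is what produces losses of order $r^{-2m}$. Collecting these with the $r^{-j}$ cutoff factors, all bounded by $C(1+r^{-2m})$, yields \eqref{localized_est_w}.
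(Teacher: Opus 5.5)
Your final plan is essentially the paper's proof. Both extend $\psi|_K$ across $\pm r$ by a Seeley-type reflection, glue with a cutoff at scale $r$ whose derivatives produce the factors $r^{-j}\le 1+r^{-2m}$, and control the intermediate quantities $\sup_K|(1+x^2)\psi^{(k)}|$ by $M_0+M_{2m}$ via Landau--Kolmogorov. The paper applies Landau--Kolmogorov on unit intervals $[r+n,r+n+1]$, where $1+x^2$ varies by at most a factor $3$, which removes the need for your commutator induction. Since that inequality is scale-free, the only $r$-dependence should come from the cutoff; rescaling to intervals of length $r$, as you suggest at the end, would give bad factors $r^{2m-k}$ when $r$ is large.

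One technical point needs repair. A reflection matching only $2m$ derivatives at $\pm r$ gives a $C^{2m}$ function, not an element of $\mathcal{S}(\R)$. Moreover, no extension built from $\psi|_K$ agrees with $\psi$ on a \emph{neighbourhood} of $K$, only on $K$ itself (the paper's write-up has the same slip). Use Seeley's genuinely $C^\infty$ extension instead. Then obtain $\langle w,\psi-\tilde\psi\rangle=0$ as follows: $\varphi=\psi-\tilde\psi$ is smooth, supported in $[-r,r]$ and flat at $\pm r$, so its dilates $\varphi(\cdot/(1-\delta))$, which are supported in $(-r,r)$, converge to $\varphi$ in $\mathcal{S}(\R)$ as $\delta\downarrow0$.
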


\begin{proof}
Set $M_k=\sup_{x\in K}|(1+x^2)\psi^{(k)}(x)|$ for $k\in\{0,\ldots,2m\}$. We first claim that
\begin{equation}\label{weighted_intermediate}
M_k\leq L_{m,k}(M_0+M_{2m})\qquad\text{for all }k\in\{0,\ldots,2m\}.
\end{equation}
Indeed, for $n\in\N$ consider $I_n=[r+n,r+n+1]$ and set $f_n(t)=\psi(r+n+t)$ for $t\in[0,1]$. On $I_n$, we have $1+(r+n+t)^2\leq 3(1+(r+n)^2)$ for all $t\in[0,1]$. The Landau--Kolmogorov inequality on $[0,1]$ (see \cite{HardyLittlewoodPolya1952}) gives $\|f_n^{(k)}\|_{L^\infty([0,1])}\leq L'_{m,k}(\|f_n\|_{L^\infty([0,1])}+\|f_n^{(2m)}\|_{L^\infty([0,1])})$. Using the weight bound and $\|f_n\|_\infty\leq M_0/(1+(r+n)^2)$, we get $(1+x^2)|\psi^{(k)}(x)|\leq 3L'_{m,k}(M_0+M_{2m})$ on $I_n$. The same argument applies to $I_n'=[-r-n-1,-r-n]$. Taking the supremum over $n$ gives \eqref{weighted_intermediate}.

Now let $E_+,E_-$ be Seeley extension operators \cite{SeeleyExtension} for $[0,+\infty)$ and $(-\infty,0]$ respectively, satisfying for each $j\in\N$:
\begin{equation}\label{seeley_bounds}
\|(E_+ f)^{(j)}\|_{L^\infty(\R)}\leq A_j\|f^{(j)}\|_{L^\infty([0,+\infty))},\qquad\|(E_- g)^{(j)}\|_{L^\infty(\R)}\leq A_j\|g^{(j)}\|_{L^\infty((-\infty,0])}.
\end{equation}
Define $\tilde\psi_+(x)=(E_+(\psi(\cdot+r)))(x-r)$ (extending $\psi|_{[r,+\infty)}$ to all of $\R$), $\tilde\psi_-(x)=(E_-(\psi(\cdot-r)))(x+r)$ (extending $\psi|_{(-\infty,-r]}$),
and choose $\rho_+,\rho_-\in C^\infty(\R)$ with $\rho_+=1$ for $x\geq 3/4$, $\supp\rho_+\subset[1/2,+\infty)$, and analogously for $\rho_-$. Set $\rho_{\pm,r}(\cdot)=\rho_\pm(\cdot/r)$ and
$ \tilde\psi=\rho_{+,r}\tilde\psi_++\rho_{-,r}\tilde\psi_-$.
Then $\tilde\psi=\psi$ on a neighborhood of $K$, so $\varphi=\psi-\tilde\psi$ has $\supp\varphi\subset(-r,r)$, and since $\supp w\subset K$:
$\langle w,\psi\rangle=\langle w,\tilde\psi\rangle$.

\emph{Zeroth-order bound.} On $K$, $\tilde\psi=\psi$, so $\sup_K|(1+x^2)\tilde\psi|=M_0$. On $[-r,r]$, $1+x^2\leq 1+r^2$, and the Seeley bounds \eqref{seeley_bounds} with $j=0$ give $\|\tilde\psi_\pm\|_\infty\leq A_0\|\psi\|_{L^\infty(\{\pm x\geq r\})}\leq A_0 M_0/(1+r^2)$. Hence $\sup_{|x|\leq r}|(1+x^2)\tilde\psi(x)|\leq 2A_0 M_0$, and combining: $\sup_\R|(1+x^2)\tilde\psi|\leq C_0 M_0$.

\emph{$2m$-th order bound.} By the Leibniz rule:
\[
\tilde\psi^{(2m)}=\sum_{j=0}^{2m}\binom{2m}{j}\rho_{+,r}^{(j)}\tilde\psi_+^{(2m-j)}+\sum_{j=0}^{2m}\binom{2m}{j}\rho_{-,r}^{(j)}\tilde\psi_-^{(2m-j)}.
\]
Using the Seeley bounds, the intermediate estimates \eqref{weighted_intermediate}, and $\|\rho_{\pm,r}^{(j)}\|_\infty\leq r^{-j}\|\rho_\pm^{(j)}\|_\infty$, we get $\sum_{j=0}^{2m}\binom{2m}{j}\|\rho_{\pm,r}^{(j)}\|_\infty\leq K_m(1+r^{-2m})$. Hence
$\|\tilde\psi^{(2m)}\|_{L^\infty([-r,r])}\leq\Gamma_m(1+r^{-2m})\frac{M_0+M_{2m}}{1+r^2}$,
and multiplying by $(1+r^2)$: $\sup_{|x|\leq r}|(1+x^2)\tilde\psi^{(2m)}|\leq\Gamma_m(1+r^{-2m})(M_0+M_{2m})$. On $K$, $\tilde\psi^{(2m)}=\psi^{(2m)}$. Combining: $\sup_\R|(1+x^2)\tilde\psi^{(2m)}|\leq\tilde C_m(1+r^{-2m})(M_0+M_{2m})$.
Applying \eqref{global_est_w} to $\tilde\psi$ gives \eqref{localized_est_w}.
\end{proof}

\subsection{Gaussian volume-doubling integrability}

\begin{lem}\label{lem:gauss-doubling-int}
For every $a\geq0$ and $c>0$ there exists $C=C(a,c,C_D)>0$ such that, for every $x\in X$,
\[
\int_{X}\big(1+d(x,y)\big)^{a}\,e^{-d(x,y)^2/c}\,d\mu(y)\leq C\,\mu(B(x,1))<+\infty.
\]
\end{lem}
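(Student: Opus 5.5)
We decompose $X$ into dyadic annuli around $x$ and use the doubling property \eqref{framework-VD} to control the measure of each annulus by $\mu(B(x,1))$. This reduces the integral to a numerical series that converges because the Gaussian factor beats the polynomial volume growth. Fix $x\in X$ and write $\rho=d(x,y)$.

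\textbf{Near part.} Split
\[
X=B(x,1)\cup\bigcup_{k\geq0}A_k,\qquad A_k=B(x,2^{k+1})\setminus B(x,2^k).
\]
On $B(x,1)$ the integrand is bounded by $2^a$, so this piece contributes at most $2^a\mu(B(x,1))$.

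\textbf{Annuli.} On $A_k$ we have $2^k\leq\rho<2^{k+1}$. The integrand is therefore at most
\[
(1+2^{k+1})^a e^{-4^k/c}\leq 3^a 2^{ka}e^{-4^k/c}.
\]
Iterating doubling $k+1$ times gives
\[
\mu(A_k)\leq\mu(B(x,2^{k+1}))\leq C_D^{k+1}\mu(B(x,1)).
\]
Hence the contribution of $A_k$ is at most
\[
3^aC_D\,\mu(B(x,1))\,\bigl(2^aC_D\bigr)^k e^{-4^k/c}.
\]

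\textbf{Summation.} The series $\sum_k (2^aC_D)^k e^{-4^k/c}$ converges. Indeed, the logarithm of its general term is $k\log(2^aC_D)-4^k/c\to-\infty$ faster than any linear function of $k$. Its sum depends only on $a$, $c$ and $C_D$. Adding the near part yields
\[
\int_X\big(1+d(x,y)\big)^a e^{-d(x,y)^2/c}\,d\mu(y)\leq C(a,c,C_D)\,\mu(B(x,1)).
\]
Finally, $\mu(B(x,1))<+\infty$ by the standing assumption that balls have finite measure.

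The argument is elementary. The only point requiring attention is that the constant must be independent of $x$. This holds because doubling is applied with the uniform constant $C_D$ and always centered at $x$, so no comparison between balls with different centers is needed.
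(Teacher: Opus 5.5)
Your proof is correct and is essentially the paper's own argument: the same splitting into $B(x,1)$ and dyadic annuli $A_k$, the bound $\mu(A_k)\le C_D^{k+1}\mu(B(x,1))$ from iterated doubling at the fixed center $x$, and summation of a series that converges because $e^{-4^k/c}$ decays super-exponentially. The only difference is cosmetic: you bound $(1+\rho)^a$ by $3^a 2^{ka}$ on $A_k$, where the paper uses $2^{a(k+2)}$.
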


\begin{proof}
By the standing assumptions in Subsection~\ref{subsec:hyp}, every ball has finite positive measure. Decompose $X=B(x,1)\cup\bigcup_{k\geq 0}A_k$ with $A_k=\{2^k\leq d(x,y)<2^{k+1}\}$. On $B(x,1)$ the integrand is at most $2^a$. On $A_k$ we have $(1+d(x,y))^{a}\leq 2^{a(k+2)}$, $e^{-d(x,y)^2/c}\leq e^{-4^k/c}$, and $\mu(A_k)\leq\mu(B(x,2^{k+1}))\leq C_D^{k+1}\mu(B(x,1))$ by \eqref{framework-VD}. Hence
\[\int_{X}\big(1+d(x,y)\big)^{a}\,e^{-d(x,y)^2/c}\,d\mu(y)\leq\Big(2^a+\sum_{k=0}^{\infty}2^{a(k+2)}\,C_D^{k+1}\,e^{-4^k/c}\Big)\,\mu(B(x,1))<+\infty,\]
the series converging since $e^{-4^k/c}$ decays super-exponentially in $k$.
\end{proof}

\subsection{From small-time to large-time observability}\label{subsec:small-large}

\begin{lem}\label{lem:small_long_obs}
Let $H$ be a Hilbert space, $B \in \mathcal{L}(H)$, $(S(t))_{t\ge 0}$ be a $C^0$-semigroup on $H$ and
 $h\in C^0_b([0,+\infty))$ positive on $(0,+\infty)$ and nondecreasing. Assume there exist constants $C>0$ and $\tau>0$ such that for every
$T\in(0,\tau]$ and every $f\in H$,
\begin{equation}\label{shortT}
\int_0^T h(t)\,\|S(t)f\|_{H}^2\,dt
\le
C\int_0^T \|BS(t)f\|_{H}^2\,dt.
\end{equation}
Then for every $T\in(0,+\infty]$ and every $f\in H$,
\begin{equation}\label{allT_T}
\int_0^T h(t)\,\|S(t)f\|_{H}^2\,dt
\le
(1+3\|h\|_{\infty} h(\tau/2)^{-1})C\int_0^T \|BS(t)f\|_{H}^2\,dt.
\end{equation}
\end{lem}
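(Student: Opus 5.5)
The plan is a restart argument that propagates the short-time inequality \eqref{shortT} along consecutive windows of length $\tau/2$, paying for each restart by the monotonicity of $h$ and the contractivity of the semigroup. (I take $S$ contractive, $\|S(t)\|\le1$; if only $\|S(t)\|\le Me^{\omega t}$ is available, the constant must change, so this is the point to check first.)

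\emph{Case $T\le\tau$.} Here \eqref{allT_T} is immediate from \eqref{shortT}, since $1+3\|h\|_\infty h(\tau/2)^{-1}\ge1$.

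\emph{Case $T>\tau$.} I would split $\int_0^T=\int_0^{\tau/2}+\int_{\tau/2}^T$.

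The first piece is bounded by \eqref{shortT} with horizon $\tau/2$, which gives at most $C\int_0^T\|BS(t)f\|^2\,dt$. This accounts for the summand $1$ in the constant.

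For the tail, I use $h\le\|h\|_\infty$ and reduce to bounding $\int_{\tau/2}^T\|S(t)f\|^2\,dt$. Cover $[\tau/2,T)$ by the intervals $J_k=[k\tau/2,(k+1)\tau/2)$, $k\ge1$. For $t\in J_k$ write $t=k\tau/2+\sigma$ with $\sigma\in[0,\tau/2)$. Contractivity gives
\[
\|S(t)f\|^2\le\|S(s)f\|^2\quad\text{for every } s\le t .
\]

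The key averaging step concerns the previous window $J_{k-1}=[(k-1)\tau/2,k\tau/2)$. Put $g=S((k-1)\tau/2)f$ and apply \eqref{shortT} to $g$ with horizon $\tau/2$:
\[
\int_0^{\tau/2}h(u)\|S(u)g\|^2\,du\le C\int_{J_{k-1}}\|BS(t)f\|^2\,dt .
\]
On $u\in[0,\tau/2]$ we have $\|S(u)g\|^2\ge\|S(\tau/2)g\|^2=\|S(k\tau/2)f\|^2$, so the left side is at least $\|S(k\tau/2)f\|^2\int_0^{\tau/2}h$.

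This is the main obstacle: $h$ may vanish near $0$, so $\int_0^{\tau/2}h$ need not be comparable to $h(\tau/2)$. To get a factor $h(\tau/2)$ exactly, I would instead apply \eqref{shortT} with horizon $\tau$ to $g$ and keep only the part $u\in[\tau/2,\tau]$. There $h(u)\ge h(\tau/2)$ because $h$ is nondecreasing, and $\|S(u)g\|^2\ge\|S(\tau)g\|^2=\|S((k+1)\tau/2)f\|^2$. This yields
\[
\tfrac{\tau}{2}\,h(\tau/2)\,\|S((k+1)\tau/2)f\|^2\le C\int_{J_{k-1}\cup J_k}\|BS(t)f\|^2\,dt .
\]
Consequently, the contribution of the window $J_{k+1}$ to the tail is at most $\tfrac{\tau}{2}\|S((k+1)\tau/2)f\|^2$, which is bounded by $C h(\tau/2)^{-1}\int_{J_{k-1}\cup J_k}\|BS\|^2$. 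This covers the windows $J_k$ with $k\ge2$.

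The window $J_1$ is handled the same way, using \eqref{shortT} with horizon $\tau$ on $f$ itself and the range $u\in[\tau/2,\tau]$; it costs $C h(\tau/2)^{-1}\int_0^\tau\|BS\|^2$.

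\emph{Summation.} Each window of the observed integral is counted at most three times: once from the $J_1$ estimate and twice from the overlapping pairs $J_{k-1}\cup J_k$. Hence the tail is at most $3\|h\|_\infty h(\tau/2)^{-1}C\int_0^T\|BS\|^2$. Adding the first piece gives \eqref{allT_T}.

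\emph{End effects.} When $T$ is finite, the last window is truncated and the windows on the right-hand side must stay inside $[0,T]$. Since $T>\tau$, the windows used for estimating $J_{k+1}\subset[0,T)$ lie in $[0,(k+1)\tau/2]\subset[0,T]$, so they are admissible. The case $T=+\infty$ follows by monotone convergence.
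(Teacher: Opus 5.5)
Your argument proves the lemma only under the extra assumption $\|S(t)\|\le 1$, and that assumption is not among the hypotheses: the statement covers an arbitrary $C^0$-semigroup. You use contractivity in two places. First, to get $\|S(u)g\|\ge\|S(\tau)g\|$ on $[\tau/2,\tau]$. Second, to bound $\int_{J_{k+1}}\|S(t)f\|^2\,dt$ by $\frac{\tau}{2}\|S((k+1)\tau/2)f\|^2$, which is also how you handle the truncated last window. Your remark that without contractivity ``the constant must change'' is incorrect: the stated constant holds for every $C^0$-semigroup. For the semigroups $e^{-tA}$ used in the paper contractivity does hold, but that does not prove the lemma as stated.

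The missing observation is that your key step already bounds a window integral, so you never need point values. Apply \eqref{shortT} with horizon $\tau$ to $S(s)f$, keep only $u\in[\tau/2,\tau]$, and use the semigroup property. This gives
\[
h(\tau/2)\int_{s+\tau/2}^{s+\tau}\|S(t)f\|_H^2\,dt\le C\int_s^{s+\tau}\|BS(t)f\|_H^2\,dt\qquad(s\ge0),
\]
with no growth bound on $S$ involved. You already do exactly this for $J_1$, and it is the paper's step after rescaling to $\tau=1$.

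Now take $s=k\tau/2$ for $k=0,\dots,N$, with $N=\lfloor 2T/\tau-2\rfloor$. These windows cover $[\tau/2,N\tau/2+\tau]$, and the observation intervals $[k\tau/2,k\tau/2+\tau]\subset[0,T]$ overlap with multiplicity at most two. Do not handle the truncated end by shifting one window back; that is what forced contractivity. Instead add one window with $s=T-\tau$. It covers $[T-\tau/2,T]\supset[N\tau/2+\tau,T]$, and it is observed on $[T-\tau,T]\subset[0,T]$ since $T>\tau$. This gives the factor $2+1=3$. Combined with \eqref{shortT} on $[0,\tau/2]$ and $h\le\|h\|_\infty$ on the tail, this yields exactly the constant in \eqref{allT_T}.
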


\begin{proof}
\emph{Case $\tau=1$.} Let $f\in H$. Since $h(t)\ge h(1/2)$ on $[1/2,1]$, applying \eqref{shortT} with $T=1$ yields
$h(1/2)\int_{1/2}^1 \|S(t)f\|_{H}^2 \, dt \leq \int_0^1 h(t)\|S(t)f\|_{H}^2 \, dt \leq C \int_0^1 \|BS(t)f\|_{H}^2 \, dt$.
For $s>0$, replacing $f$ by $S(s)f$ and using the semigroup property together with $h\le\|h\|_\infty$, we obtain
\begin{equation}\label{trans_bound}
\int_{s+1/2}^{s+1} h(t)\|S(t)f\|_{H}^2 \, dt \leq C' \int_s^{s+1} \|BS(t)f\|_{H}^2 \, dt, \quad C':=\|h\|_{\infty} h(1/2)^{-1} C.
\end{equation}
Let $T>1$ and $N=\floor{2T-2}$. We split $[0,T]=[0,1/2)\cup[1/2,N/2+1)\cup[N/2+1,T]$. On $[0,1/2)$, \eqref{shortT} directly gives the bound $C\int_0^T\|BS(t)f\|_H^2\,dt$. On $[1/2,N/2+1)$, applying \eqref{trans_bound} with $s=k/2$ for $k=0,\dots,N$ and summing yields $\int_{1/2}^{N/2+1} h(t)\|S(t)f\|_{H}^2 \, dt \leq C' \sum_{k=0}^{N}\int_{k/2}^{k/2+1}\|BS(t)f\|_H^2\,dt \leq 2C'\int_0^T\|BS(t)f\|_H^2\,dt$
(the overlap multiplicity of the intervals $[k/2,k/2+1]$ is at most two almost everywhere, and $N/2+1\leq T$).
Finally, since $T-1/2\le N/2+1$ by definition of $N$, \eqref{trans_bound} with $s=T-1$ bounds $\int_{N/2+1}^T$ by $C'\int_0^T\|BS(t)f\|_H^2\,dt$. Summing the three bounds yields \eqref{allT_T} for $\tau=1$ and $T<+\infty$.

\emph{General case.} For $\tau>0$, we apply the above to $\widetilde S(t):=S(\tau t)$ with the weight $\widetilde h(t):=h(\tau t)$; the change of variable $t\mapsto t/\tau$ then yields \eqref{allT_T}. The case $T=+\infty$ follows by monotone convergence.
\end{proof}

\small
\bibliographystyle{abbrv}
\bibliography{biblio}

@article{ErvedozaZuazua2011,
  author  = {Ervedoza, Sylvain and Zuazua, Enrique},
  title   = {Sharp Observability Estimates for Heat Equations},
  journal = {Archive for Rational Mechanics and Analysis},
  year    = {2011},
  volume  = {202},
  pages   = {975--1017},
  doi     = {10.1007/s00205-011-0445-8}
}

@article{CdVHT_AHL_2021,
  author  = {Colin de Verdi\`ere, Yves and Hillairet, Luc and Tr\'elat, Emmanuel},
  title   = {Small-time asymptotics of hypoelliptic heat kernels near the diagonal, nilpotentization and related results},
  journal = {Annales Henri Lebesgue},
  year    = {2021},
  volume  = {4},
  pages   = {897--971},
  doi     = {10.5802/ahl.93},
  url     = {https://ahl.centre-mersenne.org/articles/10.5802/ahl.93/}
}

@incollection{Zuazua2001Carleman,
  author    = {Zuazua, Enrique},
  title     = {Some Results and Open Problems on the Controllability of Linear and Semilinear Heat Equations},
  booktitle = {{C}arleman Estimates and Applications to Uniqueness and Control Theory},
  editor    = {Colombini, Ferruccio and Zuily, Claude},
  series    = {Progress in Nonlinear Differential Equations and Their Applications},
  volume    = {46},
  publisher = {Birkh{\"a}user},
  address   = {Boston, MA},
  pages     = {191--211},
  year      = {2001},
  doi       = {10.1007/978-1-4612-0203-5_14}
}

@article{Miller2004JDE,
  author  = {Miller, Luc},
  title   = {Geometric Bounds on the Growth Rate of Null-Controllability Cost for the Heat Equation in Small Time},
  journal = {Journal of Differential Equations},
  year    = {2004},
  volume  = {204},
  number  = {1},
  pages   = {202--226},
  doi     = {10.1016/j.jde.2004.05.007}
}

@article{Miller2006RLM,
  author  = {Miller, Luc},
  title   = {On Exponential Observability Estimates for the Heat Semigroup with Explicit Rates},
  journal = {Rendiconti Lincei. Matematica e Applicazioni},
  year    = {2006},
  volume  = {17},
  number  = {4},
  pages   = {351--366},
  doi     = {10.4171/RLM/473}
}

@article{Hormander1967Acta,
  author  = {H{\"o}rmander, Lars},
  title   = {Hypoelliptic second order differential equations},
  journal = {Acta Mathematica},
  year    = {1967},
  volume  = {119},
  pages   = {147--171},
  doi     = {10.1007/BF02392081},
  url     = {https://link.springer.com/article/10.1007/BF02392081}
}

@book{AgrachevBarilariBoscain2019,
  author    = {Agrachev, Andrei and Barilari, Davide and Boscain, Ugo},
  title     = {A Comprehensive Introduction to Sub-{R}iemannian Geometry},
  series    = {Cambridge Studies in Advanced Mathematics},
  volume    = {181},
  publisher = {Cambridge University Press},
  address   = {Cambridge},
  year      = {2019},
  doi       = {10.1017/9781108677325},
  isbn      = {9781108476355}
}

@book{FursikovImanuvilov1996,
  author    = {Fursikov, A. V. and Imanuvilov, O. Yu.},
  title     = {Controllability of Evolution Equations},
  series    = {Lecture Notes Series},
  volume    = {34},
  publisher = {Research Institute of Mathematics, Seoul National University},
  address   = {Seoul},
  year      = {1996}
}

@article{Imanuvilov1995Sbornik,
  author  = {Imanuvilov, O. Yu.},
  title   = {Controllability of parabolic equations},
  journal = {Sbornik: Mathematics},
  year    = {1995},
  volume  = {186},
  number  = {6},
  pages   = {879--900}
}

@article{LebeauRobbiano1995CPDE,
  author  = {Lebeau, Gilles and Robbiano, Luc},
  title   = {Contr\^ole exact de l\'equation de la chaleur},
  journal = {Communications in Partial Differential Equations},
  year    = {1995},
  volume  = {20},
  number  = {1--2},
  pages   = {335--356}
}

@article{LeRousseauLebeau2012COCV,
  author  = {Le Rousseau, J\'er\^ome and Lebeau, Gilles},
  title   = {On {C}arleman estimates for elliptic and parabolic operators. Applications to unique continuation and control of parabolic equations},
  journal = {ESAIM: Control, Optimisation and Calculus of Variations},
  year    = {2012},
  volume  = {18},
  number  = {3},
  pages   = {712--747}
}

@article{FernandezCaraZuazua2000ADE,
  author  = {Fern{\'a}ndez-Cara, Enrique and Zuazua, Enrique},
  title   = {The cost of approximate controllability for heat equations: the linear case},
  journal = {Advances in Differential Equations},
  year    = {2000},
  volume  = {5},
  number  = {4--6},
  pages   = {465--514},
  doi     = {10.57262/ade/1356651338},
  url     = {https://projecteuclid.org/journals/advances-in-differential-equations/volume-5/issue-4-6/The-cost-of-approximate-controllability-for-heat-equations--the/ade/1356651338.full}
}

@article{Lions1988SIAMRev,
  author  = {Lions, Jacques-Louis},
  title   = {Exact Controllability, Stabilization and Perturbations for Distributed Systems},
  journal = {SIAM Review},
  year    = {1988},
  volume  = {30},
  number  = {1},
  pages   = {1--68},
  doi     = {10.1137/1030001}
}

@book{Trelat2024CFID,
  author    = {Tr\'elat, Emmanuel},
  title     = {Control in Finite and Infinite Dimension},
  series    = {SpringerBriefs in PDEs and Data Science},
  publisher = {Springer},
  year      = {2024},
  doi       = {10.1007/978-981-97-5948-4},
  isbn      = {978-981-97-5947-7}
}

@article{DoleckiRussell1977SICON,
  author  = {Dolecki, Szymon and Russell, David L.},
  title   = {A general theory of observation and control},
  journal = {SIAM Journal on Control and Optimization},
  year    = {1977},
  volume  = {15},
  number  = {2},
  pages   = {185--220},
  doi     = {10.1137/0315015},
  url     = {https://epubs.siam.org/doi/10.1137/0315015}
}

@article{LaurentLeautaud2021APDE,
  author  = {Laurent, Camille and L\'eautaud, Matthieu},
  title   = {Observability of the heat equation, geometric constants in control theory, and a conjecture of {L}uc {M}iller},
  journal = {Analysis \& PDE},
  year    = {2021},
  volume  = {14},
  number  = {2},
  pages   = {355--423},
  doi     = {10.2140/apde.2021.14.355},
  url     = {https://msp.org/apde/2021/14-2/apde-v14-n2-p02-p.pdf}
}

@article{BeauchardCannarsaGuglielmi2014JEMS,
  author  = {Beauchard, Karine and Cannarsa, Piermarco and Guglielmi, Roberto},
  title   = {Null controllability of {G}rushin-type operators in dimension two},
  journal = {Journal of the European Mathematical Society},
  year    = {2014},
  volume  = {16},
  number  = {1},
  pages   = {67--101},
  doi     = {10.4171/JEMS/428}
}

@article{BeauchardMillerMorancey2015JDE,
  author  = {Beauchard, Karine and Miller, Luc and Morancey, Morgan},
  title   = {2{D} {G}rushin-type equations: Minimal time and null controllable data},
  journal = {Journal of Differential Equations},
  year    = {2015},
  volume  = {259},
  number  = {11},
  pages   = {5813--5845},
  doi     = {10.1016/j.jde.2015.07.007}
}

@article{BeauchardDardeErvedoza2020AIF,
  author  = {Beauchard, Karine and Dard{\'e}, J{\'e}r{\'e}mi and Ervedoza, Sylvain},
  title   = {Minimal time issues for the observability of {G}rushin-type equations},
  journal = {Annales de l'Institut Fourier},
  year    = {2020},
  volume  = {70},
  number  = {1},
  pages   = {247--312},
  doi     = {10.5802/aif.3313}
}

@article{DuprezKoenig2020COCV,
  author  = {Duprez, Michel and Koenig, Armand},
  title   = {Control of the {G}rushin equation: non-rectangular control region and minimal time},
  journal = {ESAIM: Control, Optimisation and Calculus of Variations},
  year    = {2020},
  volume  = {26},
  pages   = {54},
  doi     = {10.1051/cocv/2019001}
}

@article{Hormander1968Acta,
  author  = {H{\"o}rmander, Lars},
  title   = {The spectral function of an elliptic operator},
  journal = {Acta Mathematica},
  year    = {1968},
  volume  = {121},
  pages   = {193--218},
  doi     = {10.1007/BF02391913},
  url     = {https://link.springer.com/article/10.1007/BF02391913}
}

@book{Grigoryan2009_HeatKernel,
  author       = {Alexander Grigor\kern-.15em'{y}an},
  title        = {Heat Kernel and Analysis on Manifolds},
  series       = {AMS/IP Studies in Advanced Mathematics},
  volume       = {47},
  year         = {2009},
  publisher    = {American Mathematical Society / International Press of Boston},
  address      = {Providence, RI / Boston, MA},
  isbn_softcover = {978-0-8218-9393-7},
  isbn_ebook     = {978-1-4704-1750-5},
  pages        = {482},
}

@article{Guichal1985_ControlOperatorHeat,
  author    = {Edgardo N. Güichal},
  title     = {A lower bound of the norm of the control operator for the heat equation},
  journal   = {Journal of Mathematical Analysis and Applications},
  volume    = {110},
  number    = {2},
  pages     = {519--527},
  year      = {1985},
  doi       = {10.1016/0022-247X(85)90313-0}
}

@article{sikora2004riesz,
  author  = {Sikora, Adam},
  title   = {{R}iesz transform, {G}aussian bounds and the method of wave equation},
  journal = {Mathematische Zeitschrift},
  volume  = {247},
  pages   = {643--662},
  year    = {2004},
  doi     = {10.1007/s00209-003-0639-3},
}

@book{BinghamGoldieTeugels1987,
  author    = {Bingham, N. H. and Goldie, C. M. and Teugels, J. L.},
  title     = {Regular Variation},
  series    = {Encyclopedia of Mathematics and its Applications},
  volume    = {27},
  publisher = {Cambridge University Press},
  year      = {1987},
  isbn      = {978-0521307859},
  doi       = {10.1017/CBO9780511721434}
}

@article{DaviesSimon1984,
  author  = {Davies, E. B. and Simon, Barry},
  title   = {Ultracontractivity and the heat kernel for {Schr{\"o}dinger} operators and {Dirichlet} {L}aplacians},
  journal = {Journal of Functional Analysis},
  volume  = {59},
  number  = {2},
  pages   = {335--395},
  year    = {1984},
  doi     = {10.1016/0022-1236(84)90076-4}
}

@book{Davies1989,
  author    = {Davies, E. B.},
  title     = {Heat Kernels and Spectral Theory},
  series    = {Cambridge Tracts in Mathematics},
  volume    = {92},
  publisher = {Cambridge University Press},
  address   = {Cambridge},
  year      = {1989}
}

@incollection{GrigorYan2014HKMMS,
  author    = {Grigor'yan, Alexander and Hu, Jiaxin and Lau, Ka-Sing},
  title     = {Heat kernels on metric measure spaces},
  booktitle = {Geometry and Analysis of Fractals},
  series    = {Springer Proceedings in Mathematics and Statistics},
  volume    = {88},
  pages     = {147--207},
  publisher = {Springer},
  year      = {2014}
}

@article{GrigoryanHu2014DV,
  author       = {Alexander Grigor’yan and Jiaxin Hu},
  title        = {Upper bounds of heat kernels on doubling spaces},
  journal      = {Moscow Mathematical Journal},
  volume       = {14},
  number       = {3},
  pages        = {505--563},
  year         = {2014},
  doi          = {10.17323/1609-4514-2014-14-3-505-563}
}

@article{Romanoff1947OneParameterI,
  author       = {Romanoff, N. P.},
  title        = {On One-Parameter Groups of Linear Transformations. {I}},
  journal      = {Annals of Mathematics},
  series       = {Second Series},
  volume       = {48},
  number       = {2},
  pages        = {216--233},
  year         = {1947},
  doi          = {10.2307/1969167},
  url          = {https://www.jstor.org/stable/1969167}
}

@article{Ungar1971IntegralTransformPrelim,
  author       = {Ungar, A.},
  title        = {On an Integral Transform Related to the Wave and to the Heat Equations},
  journal      = {Notices of the American Mathematical Society},
  volume       = {18},
  pages        = {1100},
  year         = {1971}
}

@incollection{Hersh1975MethodTransmutations,
  author       = {Hersh, Reuben},
  title        = {The Method of Transmutations},
  booktitle    = {Partial Differential Equations and Related Topics},
  editor       = {Goldstein, Jerome A.},
  series       = {Lecture Notes in Mathematics},
  volume       = {446},
  pages        = {264--282},
  publisher    = {Springer},
  address      = {Berlin, Heidelberg},
  year         = {1975},
  doi          = {10.1007/BFb0070606}
}

@article{Kannai1977OffDiagonal,
  author       = {Kannai, Yakar},
  title        = {Off Diagonal Short Time Asymptotics for Fundamental Solutions of Diffusion Equations},
  journal      = {Communications in Partial Differential Equations},
  volume       = {2},
  number       = {8},
  pages        = {781--830},
  year         = {1977},
  doi          = {10.1080/03605307708820048}
}

@article{WangWangZhangZhang2019Observable,
  title   = {Observable set, observability, interpolation inequality and spectral inequality for the heat equation in {$\mathbb{R}^n$}},
  author  = {Wang, Gengsheng and Wang, Ming and Zhang, Can and Zhang, Yubiao},
  journal = {Journal de Math\'ematiques Pures et Appliqu\'ees},
  volume  = {126},
  pages   = {144--194},
  year    = {2019},
  month   = jun,
  doi     = {10.1016/j.matpur.2019.04.009},
  url     = {https://doi.org/10.1016/j.matpur.2019.04.009}
}

@article{EgidiVeselic2018Sharp,
  title   = {Sharp geometric condition for null-controllability of the heat equation on {$\mathbb{R}^d$} and consistent estimates on the control cost},
  author  = {Egidi, Michela and Veseli{\'c}, Ivan},
  journal = {Archiv der Mathematik},
  volume  = {111},
  number  = {1},
  pages   = {85--99},
  year    = {2018},
  month   = jul,
  doi     = {10.1007/s00013-018-1185-x},
  url     = {https://doi.org/10.1007/s00013-018-1185-x}
}

@article{NakicTauferTautenhahnVeselic2020_SharpEstimatesHomogenizationControlCost,
  author       = {Naki\'c, Ivica and T\"aufer, Matthias and Tautenhahn, Martin and Veseli\'c, Ivan},
  title        = {Sharp estimates and homogenization of the control cost of the heat equation on large domains},
  journal      = {ESAIM: Control, Optimisation and Calculus of Variations},
  year         = {2020},
  volume       = {26},
  pages        = {Article no. 54},
  publisher    = {EDP Sciences},
  doi          = {10.1051/cocv/2019058},
  url          = {https://www.numdam.org/articles/10.1051/cocv/2019058/}
}

@article{DuanWangZhang2020,
  author  = {Duan, Yueliang and Wang, Lijuan and Zhang, Can},
  title   = {Observability Inequalities for the Heat Equation with Bounded Potentials on the Whole Space},
  journal = {SIAM Journal on Control and Optimization},
  year    = {2020},
  volume  = {58},
  number  = {4},
  pages   = {1939--1960},
  doi     = {10.1137/19M1296847}
}

@misc{LebeauMoyano2019,
  author        = {Lebeau, Gilles and Moyano, Iv{\'a}n},
  title         = {Spectral Inequalities for the {S}chr{\"o}dinger Operator},
  year          = {2019},
  eprint        = {1901.03513},
  archivePrefix = {arXiv},
  primaryClass  = {math.AP},
  howpublished  = {Preprint, arXiv:1901.03513}
}

@article{NakicTauferTautenhahnVeselic2018,
  author  = {Naki{\'c}, Ivica and T{\"a}ufer, Matthias and Tautenhahn, Martin and Veseli{\'c}, Ivan},
  title   = {Scale-free unique continuation principle for spectral projectors, eigenvalue lifting and {W}egner estimates for random {S}chr{\"o}dinger operators},
  journal = {Analysis \& PDE},
  year    = {2018},
  volume  = {11},
  number  = {4},
  pages   = {1049--1081},
  doi     = {10.2140/apde.2018.11.1049},
  eprint  = {1609.01953},
  archivePrefix = {arXiv},
  primaryClass  = {math.AP}
}

@article{DickeSeelmannVeselic2024,
  author  = {Dicke, Alexander and Seelmann, Albrecht and Veseli{\'c}, Ivan},
  title   = {Spectral inequality with sensor sets of decaying density for {S}chr{\"o}dinger operators with power growth potentials},
  journal = {Partial Differential Equations and Applications},
  year    = {2024},
  volume  = {5},
  pages   = {Article no. 7},
  doi     = {10.1007/s42985-024-00276-0},
  eprint  = {2206.08682},
  archivePrefix = {arXiv},
  primaryClass  = {math.AP}
}

@article{SanchezCalle1984FundamentalSolutions,
  author  = {S{\'a}nchez-Calle, Antonio},
  title   = {Fundamental solutions and geometry of the sum of squares of vector fields},
  journal = {Inventiones Mathematicae},
  year    = {1984},
  volume  = {78},
  number  = {1},
  pages   = {143--160},
  month   = feb,
  doi     = {10.1007/BF01388721},
  url     = {https://doi.org/10.1007/BF01388721}
}

@incollection{Melrose1986PropagationWaveGroupSubelliptic,
  author    = {Melrose, Richard},
  title     = {Propagation for the Wave Group of a Positive Subelliptic Second-Order Differential Operator},
  booktitle = {Hyperbolic Equations and Related Topics: Proceedings of the {T}aniguchi International Symposium, {K}atata and {K}yoto, 1984},
  editor    = {Mizohata, Sigeru},
  publisher = {Academic Press},
  year      = {1986},
  pages     = {181--192},
  isbn      = {978-0-12-501658-2},
  doi       = {10.1016/B978-0-12-501658-2.50015-4}
}

@misc{ColinDeVerdiereHillairetTrelat2022arXiv2212_02920,
  author        = {Colin de Verdi\`ere, Yves and Hillairet, Luc and Tr\'elat, Emmanuel},
  title         = {Spectral asymptotics for sub-{R}iemannian {L}aplacians},
  year          = {2022},
  eprint        = {2212.02920},
  archivePrefix = {arXiv},
  primaryClass  = {math.DG},
  doi           = {10.48550/arXiv.2212.02920},
  url           = {https://arxiv.org/abs/2212.02920},
  howpublished  = {Preprint, arXiv:2212.02920}
}

@article{SeeleyExtension,
   author  = {Seeley, R. T.},
   title   = {Extension of {$C^\infty$} functions defined in a half-space},
   journal = {Proceedings of the American Mathematical Society},
   volume  = {15},
   year    = {1964},
   pages   = {625--626}
}

@article{Ludewig2019StrongShortTime,
  author  = {Ludewig, Matthias},
  title   = {Strong short-time asymptotics and convolution approximation of the heat kernel},
  journal = {Annals of Global Analysis and Geometry},
  year    = {2019},
  volume  = {55},
  pages   = {371--394},
  doi     = {10.1007/s10455-018-9630-4},
  url     = {https://link.springer.com/article/10.1007/s10455-018-9630-4}
}

@article{MazariFouquerPrivatTrelat2025LargeTimeOptimalObservation,
  title   = {Large-time optimal observation domain for linear parabolic systems},
  author  = {Mazari-Fouquer, Idriss and Privat, Yannick and Tr{\'e}lat, Emmanuel},
  journal = {Annales de l'Institut Henri Poincar{\'e} C, Analyse Non Lin{\'e}aire},
  year    = {2025},
  note    = {Published online first (23 Jan 2025)},
  doi     = {10.4171/AIHPC/152},
  eprint  = {2402.03980},
  archivePrefix = {arXiv},
  primaryClass  = {math.AP}
}

@article{LissyZuazua2019InternalObservabilityCoupledSystems,
  title   = {Internal Observability for Coupled Systems of Linear Partial Differential Equations},
  author  = {Lissy, Pierre and Zuazua, Enrique},
  journal = {SIAM Journal on Control and Optimization},
  year    = {2019},
  volume  = {57},
  number  = {2},
  pages   = {832--853},
  doi     = {10.1137/17M1119160}
}

@article{BK24a,
  title        = {A Modified Local {W}eyl Law and Spectral Comparison Results for $\delta'$-Coupling Conditions},
  author       = {Bifulco, Patrizio and Kerner, Joachim},
  journal      = {Journal of Mathematical Physics},
  year         = {2025},
  volume       = {66},
  number       = {3},
  pages        = {033503},
  doi          = {10.1063/5.0239937},
  eprint       = {2407.21719},
  archivePrefix= {arXiv},
  primaryClass = {math.SP},
  note         = {Also available as arXiv:2407.21719},
}

@article{BolteEggerRueckriemen2015HeatKernelResolventMetricGraphs,
  title   = {Heat-kernel and Resolvent Asymptotics for {Schr{\"o}dinger} Operators on Metric Graphs},
  author  = {Bolte, Jens and Egger, Sebastian and Rueckriemen, Ralf},
  journal = {Applied Mathematics Research eXpress},
  year    = {2015},
  volume  = {2015},
  number  = {1},
  pages   = {129--165},
  doi     = {10.1093/amrx/abu009},
  url     = {https://doi.org/10.1093/amrx/abu009}
}

@article{OdzakSceta2019WeylQuantumGraphs,
  title        = {On the {W}eyl Law for Quantum Graphs},
  author       = {Od{\v{z}}ak, Almasa and {\v{S}}{\'c}eta, Lamija},
  journal      = {Bulletin of the Malaysian Mathematical Sciences Society},
  volume       = {42},
  number       = {1},
  pages        = {119--131},
  year         = {2019},
  doi          = {10.1007/s40840-017-0469-9},
  publisher    = {Springer}
}

@misc{AmmariDucaJolyLeBalch2025GGCC,
  title         = {The Graph Geometric Control Condition},
  author        = {Ammari, Ka{\"i}s and Duca, Alessandro and Joly, Romain and Le Balc'h, K{\'e}vin},
  year          = {2025},
  month         = mar,
  eprint        = {2503.18864},
  archivePrefix = {arXiv},
  primaryClass  = {math.OC},
  doi           = {10.48550/arXiv.2503.18864},
  note          = {arXiv:2503.18864},
}

@article{AvdoninEdwardLeugering2023CycleDeltaPrime,
  title   = {Controllability for the wave equation on graph with cycle and delta-prime vertex conditions},
  author  = {Avdonin, Sergei and Edward, Julian and Leugering, G{\"u}nter},
  journal = {Evolution Equations and Control Theory},
  year    = {2023},
  volume  = {12},
  number  = {6},
  pages   = {1542--1558},
  doi     = {10.3934/eect.2023025},
}

@article{ApraizBarcenaPetisco2023ParabolicLoops,
  title   = {Observability and control of parabolic equations on networks with loops},
  author  = {Apraiz, Jone and B{\'a}rcena-Petisco, Jon Asier},
  journal = {Journal of Evolution Equations},
  year    = {2023},
  volume  = {23},
  pages   = {37},
  doi     = {10.1007/s00028-023-00882-2},
}

@book{HardyLittlewoodPolya1952,
  author    = {Hardy, G. H. and Littlewood, J. E. and P{\'o}lya, G.},
  title     = {Inequalities},
  edition   = {2},
  publisher = {Cambridge University Press},
  address   = {Cambridge},
  year      = {1952}
}

@book{deBruijn1958,
  author    = {de Bruijn, N. G.},
  title     = {Asymptotic Methods in Analysis},
  publisher = {North-Holland},
  address   = {Amsterdam},
  year      = {1958}
}

@book{BerkolaikoKuchment2013,
  author    = {Berkolaiko, Gregory and Kuchment, Peter},
  title     = {Introduction to Quantum Graphs},
  series    = {Mathematical Surveys and Monographs},
  volume    = {186},
  publisher = {American Mathematical Society},
  address   = {Providence, RI},
  year      = {2013},
  doi       = {10.1090/surv/186}
}

@article{Varadhan1967,
  author  = {Varadhan, S. R. S.},
  title   = {On the Behavior of the Fundamental Solution of the Heat Equation with Variable Coefficients},
  journal = {Communications on Pure and Applied Mathematics},
  volume  = {20},
  number  = {2},
  pages   = {431--455},
  year    = {1967},
  doi     = {10.1002/cpa.3160200210}
}

@article{CoulhonSikora2008,
  author  = {Coulhon, Thierry and Sikora, Adam},
  title   = {Gaussian Heat Kernel Upper Bounds via the {Phragm\'{e}n--Lindel\"of} Theorem},
  journal = {Proceedings of the London Mathematical Society},
  series  = {3},
  volume  = {96},
  number  = {2},
  pages   = {507--544},
  year    = {2008},
  doi     = {10.1112/plms/pdm050}
}

\end{document}